\documentclass[12pt]{amsart}

\usepackage{amsmath, amsthm, amssymb, amscd, multirow, xcolor, placeins, booktabs, array}

\newif\ifpdfAuthoring
\pdfAuthoringtrue
\ifpdfAuthoring
\usepackage[
bookmarks=true,
bookmarksnumbered=true,
bookmarksopen=false,
hypertexnames=false,
breaklinks=false,
colorlinks=true,
pagebackref=true,
hyperindex=true,
plainpages=false,
pdfauthor={Hatice Boylan and Nils-Peter Skoruppa},
pdftitle={Jacobi Forms of Lattice Index I. Basic Theory},
pdfsubject={2022},
pdfkeywords={article},
]{hyperref}
\usepackage{ae}
\else
\newcommand {\texorpdfstring}[2] {#1}
\fi

\hyphenation{holo-morph-ic skew-holo-morph-ic}


\newcommand {\C}         {{\mathbb C}}
\newcommand {\Eis}[1]    {\mathcal{E}_{#1}}

\newcommand {\Gdual}[1]  {{#1}^\ast}
\newcommand {\HP}        {{\mathbb H}}
\newcommand {\Mp}[1][\Z] {\sym{Mp}(2,{#1})}
\newcommand {\Q}         {{\mathbb Q}}

\newcommand {\SL}[1][\Z] {\sym{SL}(2,{#1})}
\newcommand {\Z}         {{\mathbb Z}}
\newcommand {\bif}       {\beta}
\newcommand {\disc}[1]   {D_{#1}}
\newcommand {\distjac}[1]{\vartheta_{#1}}
\newcommand {\dual}[1]   {{#1}^\sharp}
\newcommand {\ev}[1]     {{#1}_{\rm ev}}

\newcommand {\isom}      {\cong}
\newcommand {\lat}[1][L] {\underline #1}
\newcommand {\leg}[2]    {\genfrac{(}{)}{}{}{#1}{#2}}
\newcommand {\lsum}      {\perp}
\newcommand {\mat}[4]    {\left[\begin{smallmatrix} #1&#2 \\#3&#4\end{smallmatrix}\right]}

\newcommand {\pry}       {p}
\newcommand {\sectheta}  {\vartheta^*}
\newcommand {\sh}[1]     {{#1}^{\bullet}}
\newcommand {\sym}[1]    {\operatorname{#1}}

\newcommand {\tr}        {\sym{tr}}

\newcommand {\ve}        {\varepsilon}


\usepackage[breakable]{tcolorbox}

\theoremstyle{plain}
\newtheorem{Theorem}{Theorem}[section]
\newtheorem{Lemma}[Theorem]{Lemma}
\newtheorem{Proposition}[Theorem]{Proposition}

\newtheorem*{Main Lemma}{Main Lemma}
\newtheorem*{Supplement-1}{Supplement to the Main Lemma}
\newtheorem*{Theorem-NoNum}{Theorem}
\newtheorem*{Supplement}{Supplement}

\theoremstyle{definition}
\newtheorem{Definition}[Theorem]{Definition}
\newtheorem*{Remark}{Remark}

\newtheorem{Example}[Theorem]{Example}

\begin{document}


\title[Jacobi Forms of Lattice Index]{%
  Jacobi Forms of Lattice Index I.\\
  Basic Theory}

\author{%
  Hat\.ice Boylan
}
\address{%
  {\.I}stanbul {\"U}niversitesi, Fen Fak{\"u}ltesi, Matematik B{\"o}l{\"u}m{\"u},
  34134 Vez\-ne\-ci\-ler, Fatih/{\.I}stanbul, Turkey
}
\email{%
  hatice.boylan@gmail.com}

\author{%
  Nils-Peter Skoruppa}
\address{%
  Department Mathematik, Universit\"at Siegen, 57068 Siegen, Germany}
\email{%
  nils.skoruppa@gmail.com}


\subjclass[2020]{Primary 11F50, Secondary 11F27, 11E12}

\begin{abstract}
  This is the first one of a series of articles in which we develop
  the theory of Jacobi forms of lattice index, their close interplay
  with the arithmetic theory of lattices and the theory of Weil
  representations. We hope to publish this series in an extended and
  combined way eventually as a monograph.

  In this part we present the basic theory and first structure
  theorems.  We deduce explicit dimension formulas and give
  non-trivial explicit examples.
\end{abstract}

\maketitle
\tableofcontents


\section*{Introduction}
\label{sec:intro}

In this article we present various theorems describing the spaces
$J_{k,F}(\chi)$ of Jacobi forms of half integral or integral weight
$k$, of matrix index $F$, and with character~$\chi$ on the full
modular group as they were introduced
in~\cite{Skoruppa-Schiermonnikoog} (see also \cite{Arakawa},
\cite{Clery-Gritsenko} for one of the first and one of the recent
articles on this subject).  In Section~\ref{sec:lattice-index} we
recall the definition of these spaces. However, for this we shall use
a language that differs from the one used in the cited articles. More
specifically, instead of dealing with Jacobi forms of matrix index we
shall propose the essentially equivalent but more flexible notion of
{\em Jacobi forms of lattice index}.

For a symmetric non-degenerate
$n\times n$-matrix~$F$, the space $J_{k,F}(\chi)$ is trivial unless
$k\ge \frac n2$ and $2F$ has integer entries and is positive
definite~\cite{Skoruppa:Chern-Classes}. The weights $k=\frac n2$ and
$\frac {n+1}2$ are called the {\em singular} and {\em critical weight} for
Jacobi forms with matrix index of size~$n$, respectively.  The main
motivation for this article comes from the observation that for scalar
index the Jacobi forms of singular weight can be described in terms of
merely two universal functions, namely the functions
\begin{equation}
  \label{eq:simplest}
  \vartheta(\tau,z)=\sum_{r\in\Z} \leg{-4}r q^{\frac{r^2}8}\zeta^{\frac r2},
  \quad
  \sectheta(\tau,z)=\sum_{r\in\Z} \leg{12}r q^{\frac{r^2}{24}}\zeta^{\frac r2}
  .
\end{equation}
where $q=e^{2\pi i \tau}$, $\zeta=e^{2\pi i z}$. The function
$\vartheta(\tau,z)$ occurs in the famous Jacobi triple product
identity~\cite[p.~90]{Jacobi}
\begin{equation}
  \label{eq:vartheta-product-expansion}
  \vartheta(\tau,z)
  =
  q^{\frac 18}(\zeta^{\frac12}-\zeta^{-\frac12})\prod_{n\ge1}(1-q^n)(1-q^n\zeta)(1-q^n\zeta^{-1})
  .
\end{equation}
The function $\sectheta(\tau,z)$, can be written as
\begin{equation}
  \label{eq:sectheta-eta}
  \sectheta(\tau,z) = \frac{\vartheta(\tau,2z)}{\vartheta(\tau,z)}\eta(\tau),
  \qquad
  \eta(\tau)
  =
  q^{\frac1{24}}\prod_{n\ge1}(1-q^n)
  ,
\end{equation}
where $\eta(\tau)$ is the Dedekind eta function, and hence posesses
also a product expansion, which is known also as the Watson quintuple
product identity. These functions define Jacobi forms in
$J_{\frac12,\frac12}(\ve^3)$ and $J_{\frac12,\frac32}(\ve)$,
respectively~\cite[p.~27]{Skoruppa:Dissertation}. Here $\ve$ denotes
the linear character of $\Mp$ (the non-trivial two-fold central
extension of~$\SL$) afforded by the Dedekind eta function. The
character $\ve$ has order~$24$ and generates the group of linear
characters of $\Mp$ (cf.~Proposition~\ref{prop:G-abelianization}),
and~$\ve^2$ factors through a character of the modular group $\SL$ and
generates the group of linear characters of $\SL$.

The mentioned observation for the spaces of Jacobi forms of singular
weight and scalar index can be summarized as follows.
\begin{Theorem-NoNum}[\protect{~\cite{Skoruppa:Dissertation}}]
  \label{thm:singular-rank-1}
  Let $m$ be a positive half-integer. Then the space of Jacobi forms
  $J_{\frac12,m}(\chi)$ is trivial unless $(\chi, m)=(\ve,
  \frac{3d^2}2)$ or $(\chi, m)=(\ve^3, \frac{d^2}2)$ for a positive
  integer~$d$. In the latter cases one has
  \begin{equation*}
    J_{\frac12,\frac{3d^2}2}(\ve)=\C\cdot\sectheta(\tau,dz),
    \qquad
    J_{\frac12,\frac{d^2}2}(\ve^3)=\C\cdot\vartheta(\tau,dz)
    .
  \end{equation*}
\end{Theorem-NoNum}

It seems that this theorem holds also for Jacobi forms of lattice
index of arbitrary rank.  As explained above we shall develop the
basic features of this language in
Section~\ref{sec:lattice-index}. Using lattice indices the theorem can
be divided into two parts. The first part is that $\vartheta$
and~$\sectheta$ are non-zero Jacobi forms of singular weight for the
maximal positive definite integral lattices
$\lat [\Z]=\big(\Z,(x,y)\mapsto xy\big)$ and
$\lat [\Z](3)=\big(\Z,(x,y)\mapsto 3xy\big)$, respectively, and that
these are (up to isomorphism) the only maximal positive definite
integral lattices of rank one that permit non-zero Jacobi forms of
singular weight. The second part is that, for an arbitrary positive
definite integral lattice~$\lat$ of rank one and an arbitrary linear
character $\chi$ of~$\SL$, the space $J_{1/2,\lat}(\chi)$ equals
$J_{1/2,\lat^{\mathrm{m}}}(\chi)$, where $\lat^{\mathrm{m}}$ is the
maximal positive definite integral lattice containing $\lat$.

In a sequel to the present article we shall show that the first
statement extends to indices of arbitrary rank in the sense that, for
a given rank~$n$, there are only finitely many maximal positive
definite integral lattices of rank~$n$ which permit non-zero Jacobi
forms of singular weight.  The second statement holds modulo slight
modifications also for rank~two and in certain cases also for higher
rank. Currently we do not know whether or in what forms the second
statement holds for arbitrary rank $n>2$.

The key for proving these results is the connection between Jacobi
forms and vector-valued modular forms which, for an integral lattice
$\lat$ of rank $n$ as index, can be expressed as a natural isomorphism
\begin{equation}
  \label{eq:fundamental-isomorphism}
  J_{k,\lat}(\chi)
  \isom
  \big(M_{k-n/2}\otimes\Theta(\lat)\otimes\C(\chi)\big)^{G}
  ,
\end{equation}
where $G=\Mp$ is the non-trivial double cover of $\SL$, where
$M_{k-n/2}$ is the (infinite dimensional) $G$-module of modular forms
of weight $k-n/2$ on congruence subgroups of $G$, where $\C(\chi)$ is
$\C$ with the $G$-action $(\alpha,z)\mapsto \chi(\alpha)z$, and where
$\Theta(\lat)$ is the $G$-module spanned by the basic theta functions
associated to~$\lat$ (see~\eqref{eq:basic-theta-functions}). For
singular weight, i.e.~for $k=n/2$ and thus $M_{k-n/2}=\C$ we are thus
led to study the $G$-invariants of~$\Theta(\lat)\otimes\C(\chi)$.

For the latter it is important to observe that $\Theta(\lat)$ is
isomorphic as $G$-module to the Weil representation associated to the
discriminant module $\disc {\lat}$ of $\lat$, which exists, strictly
speaking, only for even $\lat$. It is not difficult to
see~\cite{Skoruppa-fqm} that the Weil representation depends (up to
$G$-module isomorphism) only on the isomorphism class of $\disc
{\lat}$ (as object in the category of finite quadratic modules). It is
therefore clear that results on singular weight Jacobi forms, and to a
certain extent also on higher weight, have their counterpart in the
theory of finite quadratic modules and associated Weil
representations.

As it turned out in our studies and in particular, in view of the
indicated results on Jacobi forms of singular weight, it would be
unnatural and unnecessary complicated to exclude odd lattices
(i.e.~integral but not even lattices) from our
considerations. However, for an odd lattice, the notion of
discriminant module and associated Weil representation does not
exist. We overcome this dilemma by associating to an odd lattice two
new objects, which we call its {\em shadow} and {\em shadow
  representation}. The shadow of a lattice is in fact not a new
notion. It was introduced in~\cite{Sloane-Rains}, where we also
borrowed its name from. As it turns out all what is sketched in the
previous paragraph holds true for odd lattices if one includes in the
theory the shadows and the shadow representations of odd lattices.  We
point out that the shadow representations might also be of interest in
the theory of Weil representations independent of its applications to
Jacobi forms.

The isomorphism~\eqref{eq:fundamental-isomorphism} has two further
important consequences.  The first one concerns Jacobi forms of
critical weight, i.e.~of weight $(n+1)/2$ and lattice index of rank
$n$. For this we recall a theorem of Serre and
Stark~\cite[Thm.~A]{Serre-Stark} which implies that $M_{1/2}$ equals
the direct limit of the family of $G$-modules $\Theta^{\mathrm{ev}}(\lat
[\Z](m))$, where $m$ runs through the positive integers, where the
superscript `ev' denotes the subspace of functions $\theta(\tau,z)$ in
$\Theta^{\mathrm{ev}}(\lat [\Z](m))$ which are even in $z$, and where,
for $m=nd^2$, the $G$-module $\Theta^{\mathrm{ev}}(\lat [\Z](n))$ is
mapped onto $\Theta^{\mathrm{ev}}(\lat [\Z](m))$ via
$\theta(\tau,z)\mapsto \theta(\tau,dz)$. It follows that
\begin{equation*}
  J_{{(n+1)}/2,\,\lat}(\chi)
  \isom
  \varinjlim_m
  J_{{(n+1)}/2,\,\lat [\Z](m)\lsum \lat}(\chi)
  .
\end{equation*}
In other words all Jacobi forms of critical weight and index of
rank~$n$ `come' from Jacobi forms of singular weight and index of rank
$n+1$.  A more explicit statement is given in
Theorem~\ref{thm:critical-iso}.

The second important consequence of~\eqref{eq:fundamental-isomorphism}
follows from the fact that $\Theta(\lat)$ and $\Theta(\lat')$ are
isomorphic if $\disc{\lat}$ and $\disc{\lat'}$ are isomorphic as
discriminant modules. If the latter holds true
then~\eqref{eq:fundamental-isomorphism} implies that
\begin{equation*}
  J_{k,\lat}(\chi)
  \isom
  J_{k',\lat'}(\chi)
  ,
\end{equation*}
for all $k$ and $k'$ such that $k-n/2=k'-n'/2$, where $n$ and $n'$ are
the ranks of $\lat$ and $\lat'$, respectively. We note that, in
particular, a dimension formula for~$J_{k,\lat}(\chi)$ should depend
only on $k-n/2$ and $\disc{\lat}$, but not on $\lat$. In
Section~\ref{sec:dimension-formulas} we work out an explicit dimension
formula for the spaces $J_{k,\lat}(\chi)$ and we shall recover the
mentioned phenomenon.

The underlying theme of this article (and of the planned subsequent
ones) is the close interplay between the theory of Jacobi forms and
the arithmetic theory of lattices.  We believe that this principle can
serve as a valuable guideline for producing still more insight into
the nature of Jacobi forms of lattice index than explained in this
first article.

When preparing our results on Jacobi forms of singular and critical
weight we observed that treatment of Jacobi forms as it can be found in
the literature was not appropriate to pursue our studies.  For
comparing and relating Jacobi forms of different index we needed a
coordinate free manner to deal with the indices. The necessary formal
steps for this led to an intrinsic formulation (and re-formulation)
which turned out to be more than a syntactic trick, but revealed more
and more how much purely lattice theoretic questions have their
counterpart in the theory of Jacobi forms. As a consequence our
material split naturally into two parts. One which develops
systematically the theory of Jacobi forms of coordinate free lattice
index, and another one which concentrates on properties of lattices
and their associated Weil representations or shadow representations.

In this article we develop in Section~\ref{sec:lattice-index} the
notion of Jacobi forms of lattice index. In
section~\ref{sec:theta-expansion} we explain the relation between
Jacobi forms and Weil- and shadow representations. However, these
representations are realized here as spaces of theta functions. A
deeper study of these representations profit from a purely algebraic
formulations, which will be developed in a sequel of this article. In
Section~\ref{sec:theta-expansion} we shall also explain various
important consequences of the mentioned relation between Jacobi forms
and Weil-and shadow representations. In
Section~\ref{sec:dimension-formulas} we deduce a ready to compute
dimension formula for spaces of Jacobi forms of lattice index. In
Section~\ref{sec:structure-theorems} we discuss the structure of the
graded space generated by all Jacobi forms of a given index and
character as module over the ring of modular forms of level~$1$, and
we give additional information for graded spaces of weakly holomorphic
Jacobi forms.  In the final Section~\ref{sec:examples} we apply the
results and ideas of the preceding sections to describe explicitly the
spaces of Jacobi forms on various lattices.

\subsection*{Notations}
We shall use throughout $e(x)=e^{2\pi i x}$ and $\HP$ for the upper
complex half plane.  For a ring $R$ the symbol $R^n$ denotes the
$R$-module of {\em row} vectors. Morever, we use $T=\mat 11{}1$ and
$S=\mat {}{-1}1{}$.


\section{Jacobi forms of lattice index}
\label{sec:lattice-index}

In this section we develop the theory of Jacobi forms of lattice
index.  This theory is equivalent to the theory of Jacobi forms of
matrix index~$F$ for symmetric positive definite $n\times n$ matrices
with entries in $\frac12\Z$. In this sense we propose here rather a
change of language than a new theory. However, a new ingredient will
be that we shall be able to treat odd lattices as index at the same
level as even indices by making use of the notion of a shadow of a
lattice. The inclusion of odd lattices in the theory will become
important in later sections when we study critical and singular
weight.

We introduce some notations. We use
\begin{equation*}
  G := \Mp
\end{equation*}
for the metaplectic cover of $\SL$. Recall that this is a central
extension
\begin{equation*}
  1\rightarrow \{\pm 1\}
  \rightarrow G
  \rightarrow \SL
  \rightarrow 1
  .
\end{equation*}
More precisely, $G$ consists of all pairs $(A,w)$ of elements $A=\mat
abcd$ of $\SL$ and holomorphic square roots $w(\tau)$ of the function
$c\tau+d$ on the upper half plane~$\HP$. The multiplication is defined
by $\big(A,w\big)\big(B,v\big)=\big(AB,w(A\tau)v(\tau)\big)$. Using
the presentation~$[s,t;s^2=(st)^3,s^4=1]$ of $\SL$ (see
e.g.~\cite[Thm.~1.2.5]{Rankin}) it is not difficult to show that $G$
is, up to isomorphism, the only non-trivial two-fold central extension
of $\SL$\footnote{Suppose $H$ is a non-trivial two-fold central
  extension of $\SL$. Choose $\sigma$ and~$\tau$ so that
  $\sigma\mapsto S$ and $\tau\mapsto T$ under the canonical projection
  from $H$ onto $\SL$ so that they satisfy
  $\sigma^2=(\sigma\tau)^3$. Clearly, $H$ is generated by $\sigma$ and
  $\tau$. Then either we have $\sigma^4=1$ or $\sigma^8=1$. However,
  the former can not hold true since $H$ is a non-trivial
  extension. This proves the claim.}. Recall that the group of linear
characters of $\SL$ is cyclic of order~$12$
\cite[Thm~1.3.1]{Rankin}. Recall also that
\begin{equation}
  \label{eq:epsilon}
  \ve\big(A,w\big)
  :=
  \frac{\eta(A\tau)}{\eta(\tau)\,w(\tau)}
  \qquad
  ((A,w)\in G)
  ,
\end{equation}
where $\eta(\tau)$ denotes the Dedekind eta function
(see~\eqref{eq:sectheta-eta}), is a constant depending only on
$A$~\cite{Dedekind1}, \cite{Dedekind2}.  It is clear that $\ve$
defines then a linear character of $G$.
\begin{Proposition}
  \label{prop:G-abelianization}
  The group of linear characters of the metaplectic cover $G$ of $\SL$
  is cyclic of order $24$, generated by the character $\ve$ defined
  in~\eqref{eq:epsilon}.
\end{Proposition}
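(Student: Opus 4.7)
The plan is to compute the abelianization $G^{\mathrm{ab}}$ directly from a presentation of $G$, show it is cyclic of order $24$, and then verify that $\ve$ already has order~$24$ in the character group.

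First I would use the presentation of $G$ outlined in the footnote. Lifting $S$ and $T$ to elements $\sigma,\tau\in G$ (with $\sigma$ chosen so that $\sigma^2=(\sigma\tau)^3$), the footnote shows that $G$ is the non-trivial two-fold central extension precisely when $\sigma^8=1$ but $\sigma^4\ne1$. Thus $G$ admits the presentation
\begin{equation*}
G = \bigl\langle \sigma,\tau \;\big|\; \sigma^2=(\sigma\tau)^3,\ \sigma^8=1 \bigr\rangle.
\end{equation*}
Passing to the abelianization and writing the group additively with generators $s,t$ (the images of $\sigma,\tau$), the relation $\sigma^2=(\sigma\tau)^3$ becomes $2s=3s+3t$, i.e.\ $s=-3t$, and the relation $\sigma^8=1$ becomes $8s=0$, i.e.\ $24t=0$. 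Substituting, all relations are consequences of $24t=0$, so $G^{\mathrm{ab}}$ is cyclic of order dividing $24$, generated by the image of $\tau$. Hence the character group $\operatorname{Hom}(G,\C^\times)\cong\operatorname{Hom}(G^{\mathrm{ab}},\C^\times)$ has order at most~$24$.

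Next I would show that $\ve$ attains this upper bound. Using the classical transformation $\eta(\tau+1)=e^{2\pi i/24}\eta(\tau)$ together with the defining formula~\eqref{eq:epsilon} applied to $(T,1)\in G$, one finds $\ve(T,1)=e^{2\pi i/24}$, which is a primitive $24$-th root of unity. Thus the image of $\ve$ in $\C^\times$ already contains a cyclic group of order $24$, forcing $\ve$ itself to have order exactly~$24$ in the character group. Combined with the bound from the previous paragraph, this shows that $\ve$ generates the character group, and the latter is cyclic of order~$24$.

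There is no serious obstacle; the only delicate point is getting the presentation of $G$ right — in particular the distinction $\sigma^8=1$ versus $\sigma^4=1$ between the non-trivial and trivial cover — but this is exactly what the footnote takes care of. Everything else is a direct computation in the abelianization and the well-known behaviour of $\eta$ under $T$.
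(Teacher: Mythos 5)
Your proof is correct, and it reaches the conclusion by a somewhat different route than the paper. The paper bounds the character group by observing that the commutator subgroup of $G$ surjects onto that of $\SL$, so that it has index $12$ or $24$ in $G$ (invoking the known fact that $\SL^{\mathrm{ab}}$ is cyclic of order $12$); it then pins down the order of $\ve$ indirectly, by noting that $\ve(1,-1)=-1$ forces $\ve^2$ to factor through $\SL$, computing $\ve^2(T)=e_{12}(1)$, and concluding that $\ve^2$ has order $12$, hence $\ve$ order $24$. You instead abelianize the presentation $\langle\sigma,\tau\mid\sigma^2=(\sigma\tau)^3,\ \sigma^8=1\rangle$ directly to get the bound $|G^{\mathrm{ab}}|\le 24$ (and note that for this bound it is enough that $G$ be a quotient of the presented group, which is what the footnote supplies), and you evaluate $\ve(T,1)=e^{2\pi i/24}$ in one step from $\eta(\tau+1)=e^{2\pi i/24}\eta(\tau)$. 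Your version is more self-contained on the group-theoretic side --- it needs only the presentation of $\SL$ rather than the order of its abelianization --- and the direct computation of $\ve(T,1)$ is cleaner than the paper's detour through $\ve^2$; the paper's argument, on the other hand, avoids any appeal to the presentation of $G$ beyond what is needed to know that the central kernel has order $2$. Both are complete proofs.
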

\begin{proof}
  Since under the natural projection of $G$ onto $\SL$ the commutator
  subgroup $K$ of $G$ is mapped onto the commutator subgroup of $\SL$
  it follows that $K$ has index $12$ or $24$ in $G$.  Since
  $\ve(1,-1)=-1$ we see that $\ve^2$ factors through a linear
  character of $\SL$, and since $\ve^2(T)=e_{12}(1)$ and the
  abelianized group of $\SL$ has order $12$ we see that $\epsilon^2$
  has order $12$, whence $\ve$ has order $24$.  This implies the
  proposition.
\end{proof}

An integral lattice of rank $n$ is a pair $\lat = (L,\bif)$ of a free
$\Z$-module~$L$ of rank~$n$ and a $\Z$-valued symmetric $\Z$-bilinear
form $\bif$ on $L$. We shall use throughout
\begin{equation*}
  \bif(x):=\tfrac 12\bif(x,x)
  .
\end{equation*}
We shall use the same letter $\bif$ for the $R$-bilinear extension of
$\beta$ to $R\otimes_\Z L$, where~$R$ is any ring containing $\Z$.
Mostly, the lattices under consideration will be {\em positive
  definite}, i.e.~satisfy $\bif(x)>0$ for all $x\not=0$ in
$L$. However, at a few occasions and in particular, in the sequel of
this article we shall also consider not necessarily positive
lattices. We call $\lat$ {\em semi-positive definite}, if $\bif(x)\geq
0$ for all $x$ in $L$.

The simplest example for an integral lattice is the {\em standard
  lattice}
\begin{equation*}
  \lat[\Z]^n := \big(\Z^n,(x,y)\mapsto xy^t\big)
  .
\end{equation*}
We call an integral lattice $\lat$ {\em even} if $\bif(x,x)$ is even
for all $x$ in $L$, otherwise we call it {\em odd}. The standard
lattice $\lat[\Z]^n$ is odd. The map
\begin{equation}
  L\rightarrow \Z/2\Z,\quad
  x\mapsto e(\bif(x))
\end{equation}
defines a group homomorphism whose kernel we denote by $\ev L$. If
$\lat$ is even, the homomorphism is trivial and $\ev L =L$. Otherwise
$\ev L$ is a submodule of $L$ of index~$2$. Obviously,
\begin{equation*}
  \ev{\lat}:=(\ev L,\bif)
\end{equation*}
is the {\em maximal even sublattice of $\lat$.} Here and in the
following we call a lattice $\lat'$ {\em sublattice} of $\lat$ if
$\lat'=(L',\bif|_{L'})$ for a submodule $L'$ of finite index in $L$,
and we shall usually simply write $\bif$ for $\bif|_{L'}$. If $L$ is
non-degenerate (i.e.~one has $\bif(x,L)=0$ only for $x=0$), then there
exists a vector $r$ in $\Q\otimes L$ such that $\bif(r,x)\equiv
\bif(x)\bmod \Z$ for all $x$ in $L$. Following common
terminology~\cite{Sloane-Rains}, we then call the set
\begin{equation*}
  \sh L
  :=\big\{
  r\in \Q\otimes L:
  \bif(r,x)\equiv \bif(x)\bmod \Z\text{ for all }x\in L
  \big\}
\end{equation*}
the {\em shadow of $\lat$} and its elements the {\em shadow vectors of
  $\lat$}.  For a $\Z$-module $M$ of full rank $n$ in $\Q\otimes L$ we
use
\begin{equation*}
  \dual M = \{x \in \Q \otimes L : \bif(x,M) \subseteq \Z\}
\end{equation*}
for the {\em dual of $M$}.  Clearly $\sh L$ is a subset of $\dual{(\ev
  L)}$. If $\lat$ is even, then $\sh L=\dual L$. Otherwise $\sh L$ is
a coset, namely the non-trivial element in $\dual{\big(\ev
  L\big)}/\dual L$.  In particular,~$\sh L$ is invariant under
translation with elements in $L$, and we can use for the orbit (or
collection of cosets) $\sh L/L$. We have $|\sh L/L| = \det(\lat)$,
where
\begin{equation*}
  \det(\lat):=|\dual L/L|
  ,
\end{equation*}
or, equivalently, $\det(\lat)=\det(G)$ for any Gram matrix of $\lat$.
We define the {\em level} of an integral non-degenerate lattice
$\lat=(L,\bif)$ as the smallest positive integer $\ell$ such that
$\ell\bif(x)\in\Z$ for all $x$ in~$\dual {\ev L}$. Note that this is
the traditional definition for even lattices, and that the level of an
integral lattice $\lat=(L,\bif)$ coincides with the level of $(\ev
L,\bif)$.

\begin{Example}
  For the standard lattice $\lat[\Z]^n$ the lattice $\ev\Z^n$ consists
  of all integral vectors $x=(x_1,\dots,x_n)$ such that
  $x_1+\cdots+x_n$ is even, the shadow equals the collection of
  vectors $\frac 12 x$ in $\frac 12\Z^n$ such that all components
  of~$x$ are odd, and the level of $\lat[\Z]^n$ equals~$2$, $4$ or~$8$
  according as $n$ is $0$, $2$ or~$1$ modulo~$4$, respectively. The
  lattice $\ev{(\lat [\Z]^n)}$ equals the root lattice $\lat [D]_n$.
\end{Example}

If $8$ divides $n$ then $x \cdot x$ is integral for all~$x$ in $\sh
{\Z^n}$. In particular, the level of a general lattice $\lat
=(L,\beta)$ does in general not equal the smallest positive
integer~$\ell$ such that $\ell\bif(x)$ takes integer values on~$\sh
L$.

We can now give the definition of Jacobi forms with lattice index.
\begin{Definition}
  \label{def:Jacobi-forms-of-lattice-index}
  For $k$ in $\frac12\Z$, a positive definite integral lattice
  $\lat=(L,\bif)$, and an integer $h$ the space $J_{k,\lat}(\ve^h)$ of
  {\em Jacobi forms of weight~$k$, index $\lat$ and character~$\ve^h$}
  consists of all holomorphic functions $\phi(\tau,z)$ of a variable
  $\tau\in\HP$ and a variable $z\in \C\otimes L$ which satisfy the
  following properties:
  \begin{enumerate}
  \item
    \label{it:G-invariance}
    For all $\alpha=\big(A=\mat abcd,w\big)$ in $G$ one has
    \begin{equation*}
      \phi\big(\tfrac{a\tau+b}{c\tau+d},\tfrac z{c\tau+d}\big)
      e\big(\tfrac{-c\,\bif(z)}{c\tau+d}\big)\,
      w(\tau)^{-2k}
      =
      \ve^h(\alpha)\,\phi(\tau,z) .
    \end{equation*}
  \item
    \label{it:L-invariance}
    For all $x,y \in L$ one has
    \begin{equation*}
      \phi(\tau,z+x\tau + y)\,
      e\big( \tau \bif(x) + \bif(x,z)\big)
      =
      e\big( \bif(x+y)\big)
      \,\phi(\tau,z) .
    \end{equation*}
  \item
    \label{it:expansion}
    The function $\phi$ is {\em holomorphic at infinity}, i.e.~its
    Fourier development is of the form
    \begin{equation*}
      \phi(\tau,z)
      =
      \sum_{n\in \frac h{24} +\Z}
      \sum_{\begin{subarray}{c} r\in \sh L\\
          n \ge \bif(r)
        \end{subarray}} c(n,r) \, q^n \, e\big(\bif(z,r)\big) .
    \end{equation*}
  \end{enumerate}
\end{Definition}

Note that the crucial point in~\eqref{it:expansion} is the condition
$n\ge \bif(r)$.  That $\phi$ has Fourier expansion with $n$ and $r$
running through $\frac 1{24}h +\Z$ and~$\sh L$ holds true for any
holomorphic $\phi(\tau,z)$ satisfying the transformation
laws~\eqref{it:G-invariance} and~\eqref{it:L-invariance}, as one
easily sees by applying these transformation laws to $\tau\mapsto
\tau+1$ and to $z\mapsto z+1$. Sometimes it is useful to consider
holomorphic function $\phi(\tau,z)$ which
satisfy~\eqref{it:G-invariance},~\eqref{it:L-invariance} and
\eqref{it:expansion} with the condition $n\ge \bif(r)$ replaced by
$n\ge N$ for some integer $N$. Following the language used in the
theory of scalar-valued Jacobi forms we call such forms {\em weakly
  holomorphic} and denote the space of such forms
by~$J_{k,\lat}^{!}(\ve^h)$.

Note also that the transformation law~\eqref{it:G-invariance}, applied
to $\alpha=(1,-1)$, implies that $J_{k,\lat}(\ve^h)=0$ unless
$\ve^h(1,-1)=(-1)^{2k}$. In other words, we have
\begin{Proposition}
  One has $J_{k,\lat}(\ve^h)=0$ unless $k\equiv h/2 \bmod \Z$.
\end{Proposition}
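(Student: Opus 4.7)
The plan is simply to apply the transformation law~\eqref{it:G-invariance} to the distinguished central element $\alpha = (1,-1) \in G$, that is, the pair consisting of the identity matrix $A = \mat 1001 \in \SL$ together with the constant square root $w(\tau) \equiv -1$ of $c\tau+d = 1$. This is the nontrivial element of the kernel of the projection $G \to \SL$, so it is legitimate to plug it into~\eqref{it:G-invariance}.

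With this choice, the arguments on the left-hand side of~\eqref{it:G-invariance} collapse: $\tfrac{a\tau+b}{c\tau+d} = \tau$, $\tfrac{z}{c\tau+d} = z$, and the exponential factor $e\bigl(\tfrac{-c\,\bif(z)}{c\tau+d}\bigr)$ equals $1$ since $c = 0$. The only nontrivial piece is $w(\tau)^{-2k} = (-1)^{-2k} = (-1)^{2k}$. Hence~\eqref{it:G-invariance} reduces to the scalar identity
\begin{equation*}
  (-1)^{2k}\, \phi(\tau,z) \;=\; \ve^h(1,-1)\, \phi(\tau,z).
\end{equation*}

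Next I would evaluate $\ve^h(1,-1)$. The defining formula~\eqref{eq:epsilon} gives $\ve(1,-1) = \eta(\tau)/\bigl(\eta(\tau)\cdot(-1)\bigr) = -1$ (this is also recorded in the proof of Proposition~\ref{prop:G-abelianization}), so $\ve^h(1,-1) = (-1)^h$. Therefore, if $\phi$ is not identically zero, we must have $(-1)^{2k} = (-1)^h$, i.e.\ $2k \equiv h \pmod{2}$, which is precisely the congruence $k \equiv h/2 \pmod{\Z}$.

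There is essentially no obstacle: the argument is a one-line specialization of the transformation law and an evaluation of $\ve$ at the central element. The only mild care needed is to recall that in $G = \Mp$ the element $(1,-1)$ is a genuine group element (not the identity) and hence~\eqref{it:G-invariance} applies to it nontrivially, and to recognize that $w(\tau)^{-2k}$ makes sense even for half-integral $k$ because $w$ is a chosen holomorphic square root of $c\tau+d$, so $w^{-2k}$ is unambiguously defined.
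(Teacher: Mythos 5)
Your argument is correct and is exactly the one the paper uses: the remark immediately preceding the proposition specializes the transformation law~\eqref{it:G-invariance} to the central element $\alpha=(1,-1)$, obtaining $(-1)^{2k}=\ve^h(1,-1)=(-1)^h$, which is the stated congruence $k\equiv h/2\bmod\Z$. Your evaluation $\ve(1,-1)=-1$ from~\eqref{eq:epsilon} likewise matches the computation recorded in the proof of Proposition~\ref{prop:G-abelianization}.
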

In particular, if $k$ is integral then $J_{k,\lat}(\ve^h)=0$ unless
$h$ is even, so that $\ve^h$ factors through a character of $\SL$. In
this case the transformation law~\eqref{it:G-invariance} is in fact a
transformation law under $\SL$. If $k \in \frac 12+\Z$ then
$J_{k,\lat}(\ve^h)=0$ unless~$h$ is odd, i.e.~unless $\ve^h$ is a {\em
  genuine} character of $\Mp$.

The left hand sides of~\eqref{it:G-invariance}
and~\eqref{it:L-invariance} define (right-)actions of $G$ and $L\times
L$ on the space of holomorphic functions $\phi$ on
$\HP\times\big(\C\otimes L\big)$, both of which we denote for the
moment by a slash. It is a trivial but tedious exercise to show that,
for any $\alpha=(\mat abcd,w)$ in $G$ and any $(x,y)$ in $L\times L$,
one has
\begin{equation}
  \label{eq:semi-direct-product-obstruction}
  \phi|(x,y)|\alpha
  =
  e\big(ab\bif(x)+cd\bif(y)\big)\,\phi|\alpha|(x',y')
  ,
\end{equation}
where $(x',y')=(x,y)\mat abcd=(ax+cy,bx+dy)$. This identity has two
consequences. Namely, if a $\phi\not=0$
satisfies~\eqref{it:G-invariance} and~\eqref{it:L-invariance}, but
with the factor $e\big( \bif(x+y)\big)$ on the right
of~\eqref{it:L-invariance} replaced by the value $\chi(x,y)$ of a
given linear character of $L\times L$,
then~\eqref{eq:semi-direct-product-obstruction} implies
\begin{equation}
  \label{eq:chi-obstruction}
  \chi(x,y)
  =\chi(ax+cy,bx+dy)\,e\big(ab\bif(x)+cd\bif(y)\big)
\end{equation}
for all $x$, $y$ in $L$ and $\mat abcd$ in $\SL$.  We leave it to the
reader to verify the following
\begin{Proposition}
  For a non-degenerate integral lattice $\lat=(L,\beta)$, the character
  $\chi(x,y)=e\big(\bif(x+y))$ is the only character of $L\times L$
  which satisfies~\eqref{eq:chi-obstruction} for all $\mat abcd$ in
  $\SL$.
\end{Proposition}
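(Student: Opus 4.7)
My plan is to decompose any character of $L \times L$ as $\chi(x,y) = \psi_1(x)\psi_2(y)$ with $\psi_1, \psi_2$ characters of $L$, and to extract functional equations for $\psi_1, \psi_2$ by specializing the necessary condition~\eqref{eq:chi-obstruction} to the two matrices $S = \mat 0{-1}10$ and $T = \mat 1101$. These two specializations alone force $\chi = e(\bif(x+y))$, which yields uniqueness, and existence is then a direct check that this $\chi$ really satisfies~\eqref{eq:chi-obstruction} for every $\mat abcd \in \SL$.

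Taking $\alpha = S$ reduces~\eqref{eq:chi-obstruction} to $\chi(x,y) = \chi(y,-x)$, i.e., $\psi_1(x)\psi_2(y) = \psi_1(y)\psi_2(-x)$. Setting $x = 0$ yields $\psi_1 = \psi_2 =: \psi$, and the identity then becomes $\psi(x)\psi(y) = \psi(y)\psi(x)^{-1}$, forcing $\psi(x)^2 = 1$ for every $x \in L$. Taking $\alpha = T$ reduces~\eqref{eq:chi-obstruction} to $\chi(x,y) = \chi(x,x+y)\,e(\bif(x))$, whose right-hand side simplifies to $\psi(x)^2\psi(y)\,e(\bif(x)) = \psi(y)\,e(\bif(x))$, giving $\psi(x) = e(-\bif(x)) = e(\bif(x))$; the last equality uses $2\bif(x) = \bif(x,x) \in \Z$, which holds because $\lat$ is integral. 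Hence $\chi(x,y) = e(\bif(x))e(\bif(y)) = e(\bif(x) + \bif(x,y) + \bif(y)) = e(\bif(x+y))$, with the middle step using $\bif(x,y) \in \Z$.

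For existence, substituting $\chi(x,y) = e(\bif(x+y))$ into~\eqref{eq:chi-obstruction} and expanding $\bif\bigl((a+b)x+(c+d)y\bigr)$ collapses the identity, after cancelling the integer $(a+b)(c+d)\bif(x,y)$, to the single congruence $\bigl((a+b)^2 + ab - 1\bigr)\bif(x) + \bigl((c+d)^2 + cd - 1\bigr)\bif(y) \in \Z$. Modulo $2$ the two integer coefficients equal $(a+1)(b+1)$ and $(c+1)(d+1)$ respectively, and both are even because $ad - bc = 1$ forces each of the pairs $(a,b)$ and $(c,d)$ to be coprime, hence to contain an odd entry. Since $\bif(x), \bif(y) \in \tfrac12\Z$, the congruence holds. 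I expect the main obstacle to be precisely this parity bookkeeping, together with the repeated appeal to the integrality relations $2\bif(x) \in \Z$ and $\bif(x,y) \in \Z$; once the two-generator reduction is in place, the algebraic content of the proof is a short one-line unwinding.
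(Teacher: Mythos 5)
Your argument is correct and complete: the paper itself offers no proof here (it explicitly leaves the verification to the reader), so there is nothing to compare against, but your route is the natural one. The uniqueness half correctly exploits the product decomposition $\chi(x,y)=\chi(x,0)\chi(0,y)$ and the two generators $S$ and $T$ of $\SL$ to force $\psi_1=\psi_2=e(\bif(\cdot))$, and the existence half reduces, exactly as you say, to the evenness of $(a+b)^2+ab-1\equiv(a+1)(b+1)$ and $(c+d)^2+cd-1\equiv(c+1)(d+1)$ modulo $2$, which follows from $\gcd(a,b)=\gcd(c,d)=1$. (One small remark: non-degeneracy of $\lat$ is never needed; integrality of $\bif$ alone carries the whole argument.)
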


Secondly, we see from~\eqref{eq:semi-direct-product-obstruction} that
the transformations laws~\eqref{it:G-invariance}
and~\eqref{it:L-invariance} can be combined to yield an action of the
semi-direct product $J(\lat):=G\ltimes (L\times L)$, where the
semi-direct product is defined with respect to the natural right
action of $\SL$ (and hence $\Mp$) on pairs $(x,y)$ in $L\times
L$. However, the result $\phi|(x,y)$ of the action of a pair $(x,y)$
on $\phi$ has to be redefined as the left hand side
of~\eqref{it:L-invariance} multiplied by $e\big(-\bif(x+y))$. The
transformation laws~\eqref{it:G-invariance}
and~\eqref{it:L-invariance} become then equivalent to the statement
that $\phi|g=\phi$ for all $g$ in $J(\lat)$.

We defined Jacobi forms only with respect to integral and positive
definite lattices. One might wonder whether it could make sense to
admit also non-integral or not positive definite lattices.  Suppose
$\lat = (L,\beta)$ is an arbitrary lattice. The left hand side
of~\eqref{it:L-invariance} defines then a projective action of
$L\times L$ on functions on~$\HP\times(\C\otimes L)$, namely
$\phi|(x,y)|(x',y')=e\big(-\bif(x,y')\big)\,\phi|(x+x',y+y')$. If
there would be a function $\phi\not=0$ and a map $\chi$ on $L\times
L\rightarrow \C^*$ satisfying $\phi|h=\chi(h)\phi$ for all $h$ in
$L\times L$, then $\chi$ would split the cocycle
$e\big(-\bif(x,y')\big)$, i.e.~one would have
$e\big(-\bif(x,y')\big)=\chi(x+x',y+y')/\chi(x,y)\chi(x',y')$ for all
$x,y,x',y'$ in $L$. Such a function $\chi$ obviously exist only if
$\bif$ takes integral values. This is because $\chi$, being defined on an abelian
group, is symmetric.

Finally, suppose $\lat = (L,\bif)$ is integral (but not necessarily
positive definite).
\begin{Proposition}
  If there is a non-zero holomorphic function
  $\phi:\HP\times(\C\otimes L)\rightarrow \C$, which satisfies
  $\phi|(x,y)=e\big(\bif(x+y)\big)\phi$ ($x,y\in L$), then $\lat$ is
  semi-positive definite.
\end{Proposition}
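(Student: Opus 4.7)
My plan is to prove the contrapositive: if $\lat$ is not semi-positive definite, then every holomorphic $\phi$ satisfying $\phi|(x,y) = e(\bif(x+y))\phi$ must vanish identically. The hypothesis immediately supplies some $x_0 \in L$ with $\bif(x_0,x_0) < 0$ (directly from the definition of semi-positive definite), and I intend to use this single vector to drive a Liouville-style argument along the complex line $\C x_0$ that kills $\phi$ everywhere.

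The first key step will be to convert the quasi-periodicity into a pointwise estimate for $|\phi|$. I would examine the function
\[
F_\tau(z) := |\phi(\tau, z)|^2 \exp\bigl(-2\pi\,\bif(\Im z,\Im z)/\Im\tau\bigr)
\]
and check that for each fixed $\tau \in \HP$ it is invariant under $z \mapsto z + x\tau + y$ for all $x,y \in L$; this uses that $\bif(x)$ and $\bif(x+y)$ are real, so that the transformation factor $e(\tau\bif(x) + \bif(x,z))$ in Definition~\ref{def:Jacobi-forms-of-lattice-index}(\ref{it:L-invariance}) has absolute value $e^{2\pi\bif(x)\Im\tau + 2\pi\bif(x,\Im z)}$, which matches exactly the change in the Gaussian factor. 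Since $\Im\tau > 0$, the subgroup $\{x\tau + y : x, y \in L\}$ is a full-rank lattice of rank $2n$ in $\C\otimes L$, so $F_\tau$ descends to a continuous function on a compact real torus and is therefore bounded, yielding
\[
|\phi(\tau, z)|^2 \le C(\tau)\,\exp\bigl(2\pi\,\bif(\Im z, \Im z)/\Im\tau\bigr)
\]
for all $z \in \C\otimes L$ and some constant $C(\tau)$ depending only on $\tau$.

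For the second step, I would fix $\tau \in \HP$ and $z_0 \in \C \otimes L$, set $v_0 := \Im z_0$, and consider the entire function $h(s) := \phi(\tau, z_0 + s x_0)$ of a single complex variable. Writing $s = a + ib$, we have $\Im(z_0 + s x_0) = v_0 + b x_0$, so the estimate above specialises to
\[
|h(s)|^2 \le C(\tau)\exp\bigl(2\pi(\bif(v_0,v_0) + 2b\,\bif(v_0,x_0) + b^2\bif(x_0,x_0))/\Im\tau\bigr).
\]
Because $\bif(x_0,x_0) < 0$, the exponent is a downward-opening quadratic in $b$ that is independent of $a$; hence $h$ is bounded on all of $\C$, and moreover $|h(s)| \to 0$ as $|\Im s| \to \infty$. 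Liouville's theorem forces $h \equiv 0$, and in particular $\phi(\tau,z_0) = h(0) = 0$. Since $\tau$ and $z_0$ were arbitrary, $\phi \equiv 0$, contradicting the hypothesis.

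The only nontrivial step is the bookkeeping in the first paragraph: one must verify that the real and imaginary parts of the transformation factor $e(\tau\bif(x) + \bif(x,z))$ conspire with the change in $\bif(\Im z, \Im z)/\Im\tau$ to produce a genuinely invariant $F_\tau$. Once this identity is in hand, the extraction of $x_0$ is immediate from the definitions and the Liouville step along $\C x_0$ is completely routine.
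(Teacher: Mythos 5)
Your proof is correct, but it takes a genuinely different route from the paper's. Both arguments begin with the same observation: for fixed $\tau$ the function $F_\tau(z)=|\phi(\tau,z)|^2\exp\bigl(-2\pi\,\bif(\Im z,\Im z)/\Im\tau\bigr)$ is invariant under $z\mapsto z+x\tau+y$ and hence descends to the compact torus $\C\otimes L/(\tau L+L)$. (Your invariance check is the right one, though the modulus of the factor $e(\tau\bif(x)+\bif(x,z))$ is $e^{-2\pi(\bif(x)\Im\tau+\bif(x,\Im z))}$, not its reciprocal as you wrote; the signs still cancel against the change in the Gaussian.) From there the paper argues directly: $F_\tau$ attains a positive maximum at some $z_0$, so the Hessian of $-\log F_\tau$ there is positive semi-definite, and a Cauchy--Riemann computation identifies the complex Hessian with a positive multiple of the Gram matrix, giving $G\geq 0$ in one stroke. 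You instead argue by contraposition: compactness turns the invariance into the global growth bound $|\phi(\tau,z)|^2\le C(\tau)\exp\bigl(2\pi\,\bif(\Im z,\Im z)/\Im\tau\bigr)$, and a vector $x_0\in L$ with $\bif(x_0,x_0)<0$ makes $s\mapsto\phi(\tau,z_0+sx_0)$ a bounded entire function tending to $0$ as $|\Im s|\to\infty$, so Liouville forces $\phi\equiv0$. Your version is more elementary --- one-variable Liouville replaces the multivariable plurisubharmonicity and Hessian computation, and you avoid having to justify the smoothness of $\log F_\tau$ at the maximum --- at the cost of testing negativity one direction at a time rather than exhibiting the semi-positivity of the whole Gram matrix (equivalently, of the Chern form of the underlying line bundle) as the paper's argument does.
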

\begin{proof}
  Fix $\tau$ in $\HP$. Set $F(z)=\phi(z)H(z)\overline{\phi(z)}$, where
  $H(z)=e\big(-4\pi \bif(\Im z)/\Im \tau\big)$. It is easy to see that
  $F(z+x\tau+y)=F(z)$ for all $x$, $y$ in $L$. Morever, $F\ge
  0$. Therefore, $F$ can be viewed as a smooth function on
  $T:=\C\otimes L\big/\tau\otimes L+L$. Note that $T$ is a complex torus,
  hence it is compact.  Therefore $F$ assumes a maximum value, say at
  $z_0$. Hence, $-\log(F)$ assumes its minumum at $z_0$ and it is
  smooth in a neighborhood of $z_0$. Therefore the Hessian of
  $-\log(F)$ at $z_0$ is semi-positive definite. Let $\mat A{*}{**}D$
  be the Hessian of $-\log(F)$ at $z_0$. Then $A, D\geq 0$.

  Let $L=\Z a_1+\ldots \Z a_n$ and let $z_j$ be the coordinate
  functions with respect to $a_j$.
  We have\footnote{$C$ is referred to as the Chern class of the line bundle
      $L\times L\backslash\big((\C\otimes L)\times \C\big) \rightarrow T$,
      where the action of $L\times L$ on
      $(\C\otimes L)\times \C$ is defined by
      \begin{equation*}
        \big((x,y), (z,\xi))\mapsto
        (z+\tau x+y, \xi e\big(-\tau \bif(x)-\bif(x,z)-\bif(x+y)\big)
        .
      \end{equation*}} 
    \begin{multline*}
      C
      :=
      -\big(\tfrac{\partial}{\partial z_h}\tfrac{\partial}{\partial \overline{z_l}}\log F\big)_{h,l}
      =
      \\
      -\left(\tfrac{\partial}{\partial z_h}\tfrac{\partial}{\partial \overline{z_l}}
        \big(\log \phi+\log \overline{\phi}-4\pi\bif(\Im z)/\Im \tau\big)\right)_{h,l}
      =
      c\, G
      ,
  \end{multline*}
  where $c$ is a positive constant and $G$ is the Gram matrix of
  $\lat$ with respect to $a_j$. The last identity follows from the
  fact that $\tfrac{\partial}{\partial \overline{z_l}} \log \phi=0$
  and $\tfrac{\partial}{\partial z_h}\log \overline{\phi}=0$ for any
  $h$ and $l$ (see Cachy-Riemann equations). On the other hand, using
  $\tfrac{\partial}{\partial z_h}=\tfrac{\partial}{\partial
    x_h}-i\tfrac{\partial}{\partial y_h}$ and
  $\tfrac{\partial}{\partial \overline{z_l}}=\tfrac{\partial}{\partial
    x_l}+i\tfrac{\partial}{\partial y_l}$, we obtain that $C=c'(A+D)$,
  where $c'$ is a positive constant. Therefore, we have that $G\geq
  0$, which proves the proposition. 
 \end{proof}

Thus, the only freedom which remains in the definition of Jacobi forms
is that the inteegral lattice $\lat$ can be taken as semi-positive
definite.  As we shall see in a moment the case of semi-positive
definite lattices can be reduced to the case of positive definite
ones, so that we shall not need to include the former ones in our
theory.

It remains to answer the natural question whether, for a given
integral and positive definite lattice and character $\ve^h$, there
exist always non-zero Jacobi forms of, say, sufficiently big weight in
$k\in h/2+\Z$.

For this we note some obvious but useful formal properties of our
definition. If~$\phi$ and $\psi$ are Jacobi forms in
$J_{k,\lat}(\ve^h)$ and $J_{k',\lat'}(\ve^{h'})$ for indices
$\lat=(L,\bif)$ and $\lat'=(L',\bif')$, respectively, then we can form
the product $\phi\otimes \psi$, by which we mean the function
\begin{equation*}
  \{\phi\otimes \psi\}(\tau,z_1\oplus z_2)
  =
  \phi(\tau,z_1)\psi(\tau,z_2),
\end{equation*}
defined on $\HP\times [(\C\otimes L)\oplus (\C\otimes L')]$.  It is
clear that $\phi\otimes \psi$ defines a Jacobi form in
$J_{k+k',\,\lat\lsum\lat'}(\ve^{h+h'})$, where the sum $\lat \lsum
\lat'$ is defined in the obvious way.  If $L=L'$, we can also define
the product $\phi\psi$ in the usual way, namely
$\{\phi\psi\}(\tau,z)=\phi(\tau,z)\psi(\tau,z)$. This is then a Jacobi
form in $J_{k+k',\,(L,\bif+\bif')}(\ve^{h+h'})$.

If $\lat'$ is a sublattice of $\lat$, then $J_{k,\lat'}(\chi)$
contains $J_{k,\lat}(\chi)$ as a subspace.
More generally, any isometric map $\alpha:\lat'\rightarrow\lat$
defines a linear map
\begin{equation}
  \label{eq:alpha-star}
  \alpha^*:
  J_{k,\lat}(\chi) \rightarrow J_{k,\lat'}(\chi),
  \quad
  \big(\alpha^*\phi)(\tau,z)
  =
  \phi\big(\tau,\alpha (z)\big)
  .
\end{equation}
In particular, the map $(\phi,\alpha)\mapsto \alpha^*\phi$ defines a
right action of the orthogonal group~$O(\lat)$ of $\lat$ on each space
$J_{k,\lat}(\ve^h)$. Note also that, for Jacobi forms $\phi$ and
$\psi$ as in the preceding paragraph and in the case $L=L'$, one has
$\phi\psi=\alpha^*(\phi\otimes\psi)$, where $\alpha$ is the embedding
$z\mapsto z\oplus z$ of $(L,\bif+\bif')$ into $\lat \lsum \lat'$.

The simplest non-trivial Jacobi forms are those functions defined
in~\eqref{eq:simplest}.
\begin{Lemma}[\protect{\cite[p.~27]{Skoruppa:Dissertation}}]
  \label{lem:theta}
  The functions $\vartheta(\tau,z)$ and $\sectheta(\tau,z)$
  in~\eqref{eq:simplest} define Jacobi forms in
  $J_{\frac12,\lat[\Z]}\big(\ve^3\big)$ and
  $J_{\frac12,\lat[\Z](3)}\big(\ve\big)$, respectively (where $z$
  denotes the coordinate function on $\C\otimes \Z$ which maps
  $w\otimes 1$ to $w$).
\end{Lemma}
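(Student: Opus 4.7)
The plan is to verify the three defining conditions of Definition~\ref{def:Jacobi-forms-of-lattice-index} for $\vartheta$ directly, and then to derive the corresponding properties of $\sectheta$ from the factorisation~\eqref{eq:sectheta-eta} together with the known transformation law of $\eta$.

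For condition~\eqref{it:expansion}: on $\lat[\Z]$ one has $\bif(x)=x^2/2$ and $\sh L=\tfrac12+\Z$, so substituting $r'=r/2$ recasts the series for $\vartheta$ in the required form with $q$-exponents in $\tfrac18+\Z=\tfrac{3}{24}+\Z$, matching $h=3$. On $\lat[\Z](3)$ the shadow is $\tfrac12+\tfrac13\Z$, and the substitution $r'=r/6$ (only $r$ coprime to $12$ contribute, since otherwise $\leg{12}{r}=0$) gives $\bif(r')=r^2/24\in\tfrac{1}{24}+\Z$, matching $h=1$. Condition~\eqref{it:L-invariance} for $\vartheta$ is a direct check: shifting $r\mapsto r+2x$ and completing the square, the residual sign $(-1)^{x+y}$ arising from $\leg{-4}{r+2x}=(-1)^x\leg{-4}{r}$ on odd summands and from $e(ry/2)=(-1)^y$ coincides with $e(\bif(x+y))=e((x+y)^2/2)$.

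The main step is condition~\eqref{it:G-invariance} for $\vartheta$. Since $\Mp$ is generated by $T$, $S$, and its centre $\{(1,\pm 1)\}$, and the transformation under $(1,-1)$ is automatic in weight $\tfrac12$ with odd character power (both sides acquire the sign $-1$), it suffices to treat $T$ and $S$. For $T$, the congruence $r^2\equiv 1\pmod 8$ on odd summands yields $\vartheta(\tau+1,z)=e(1/8)\vartheta(\tau,z)=\ve^3(T)\vartheta(\tau,z)$. For $S$, Poisson summation (equivalently, Jacobi's classical imaginary transformation) produces an identity of the shape $\vartheta(-1/\tau,z/\tau)=\lambda\,w(\tau)\,e\bigl(z^2/(2\tau)\bigr)\vartheta(\tau,z)$ for some scalar $\lambda$. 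Rather than evaluate the Gaussian integral to pin down $\lambda$ directly, I would argue that this identity together with the $T$-identity forces $\vartheta$ to transform under $G$ by a \emph{linear character}; by Proposition~\ref{prop:G-abelianization} that character lies in $\langle\ve\rangle$, and its value $e(1/8)$ at $T$ identifies it uniquely as $\ve^3$, whence $\lambda=\ve^3(S)$.

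For $\sectheta$, I would substitute the transformation laws of $\vartheta$ and $\eta$ into $\sectheta=\vartheta(\tau,2z)\,\eta(\tau)/\vartheta(\tau,z)$: the factors $w(\tau)$ collapse to a single copy, the two occurrences of $\ve^3$ cancel and combine with the character $\ve$ of $\eta$ to leave $\ve$, and the exponential phases combine as $e(2z^2/\tau)/e(z^2/(2\tau))=e(3z^2/(2\tau))$, which is precisely the $S$-factor required by the index $\lat[\Z](3)$. The $T$-transformation and~\eqref{it:L-invariance} follow from analogous cancellations, and the Fourier expansion is obtained by multiplying out the expansions of the three factors. The one remaining verification, holomorphicity of the quotient, is immediate from~\eqref{eq:vartheta-product-expansion}: the simple zeros of $\vartheta(\tau,z)$ along $z\in\Z\tau+\Z$ are each matched by a simple zero of $\vartheta(\tau,2z)$, and the cancellation reproduces the Watson quintuple product, whose Fourier series coincides with that defining $\sectheta$. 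The principal obstacle -- the eighth root of unity produced by Poisson summation in the $S$-transformation of $\vartheta$ -- is thus circumvented by the character-group argument via Proposition~\ref{prop:G-abelianization}.
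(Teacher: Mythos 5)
The paper does not actually prove Lemma~\ref{lem:theta}: it simply cites \cite[p.~27]{Skoruppa:Dissertation} (and points to \cite[App.~I, Thm 5.4]{Skoruppa:Manifolds-and-Modular-Forms} for another proof of the modularity of $\vartheta$). So any genuine verification is ``a different route'' from the paper, and yours is a correct one. Your computations of the shadows ($\sh\Z=\tfrac12+\Z$ for $\lat[\Z]$, and $\tfrac12+\tfrac13\Z=\tfrac16+\tfrac13\Z$ for $\lat[\Z](3)$), of the $q$-exponents $\tfrac{r^2}8\in\tfrac3{24}+\Z$ and $\tfrac{r^2}{24}\in\tfrac1{24}+\Z$, and of the elliptic transformation law (the sign $(-1)^{x+y}=e(\bif(x+y))$ coming from $\leg{-4}{r+2x}=(-1)^x\leg{-4}{r}$ and $e(ry/2)=(-1)^y$) all check out, as does the bookkeeping $e(2z^2/\tau)/e(z^2/(2\tau))=e(3z^2/(2\tau))$ and the collapse of the characters to $\ve^{3}\cdot\ve\cdot\ve^{-3}=\ve$ in the $\sectheta$ case. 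The pleasant feature of your argument is the use of Proposition~\ref{prop:G-abelianization} to avoid evaluating the Gauss sum: since the slash in \eqref{it:G-invariance} is a genuine right action of $G$ and $s=(S,\sqrt\tau)$, $t=(T,1)$ generate $G$, knowing that $\vartheta|s$ and $\vartheta|t$ are scalar multiples of $\vartheta\neq 0$ forces the multiplier to be a linear character, hence some $\ve^{j}$, and $j$ is pinned down by the value $e(1/8)=e(3/24)$ at $t$ because $\ve(t)=e(1/24)$ has exact order $24$. It would be worth making that last chain of implications explicit, since it is the only non-routine step. The one point of dependence you should acknowledge more loudly is that your treatment of $\sectheta$ checks condition~\eqref{it:expansion} from the defining series but conditions \eqref{it:G-invariance}--\eqref{it:L-invariance} from the factorisation \eqref{eq:sectheta-eta}; coherence of the two requires the quintuple product identity, which the paper asserts but which you do not prove (an alternative avoiding it entirely is to run the same Poisson-summation-plus-character argument directly on the series defining $\sectheta$, using $r^2\equiv 1\bmod 24$ for $\gcd(r,12)=1$ to get the $T$-multiplier $e(1/24)=\ve(t)$).
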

Here, for a lattice $\lat=(L,\bif)$ and for a positive integer $a$ we
use $\lat(a)$ for the lattice $(L,a\bif)$. Another proof for the
modularity of $\vartheta(\tau,z)$ can also be found in~\cite[App.~I,
Thm 5.4]{Skoruppa:Manifolds-and-Modular-Forms}.

From the lemma we obtain immediately Jacobi forms for the standard
lattice~$\lat [\Z]^N$ in the form
\begin{equation}
  \label{eq:singular-form-for-Zn}
  \distjac {\lat [\Z]^N}(\tau,\_)
  :=
  \vartheta(\tau,w_1)\cdots \vartheta(\tau,w_n)
  ,
\end{equation}
where the $w_j$ are the coodinate functions on $\C\otimes \Z^N$ with respect to the canonical $\Z$-basis of $\Z^N$ (formed by row vectors
$(1,0,\dots,0)$, $(0,1,0,\dots,0)$, \dots). A similar
construction can be applied to $\sectheta$ for obtaining Jacobi forms
for $\lat [\Z]^N(3)$. Using the maps~\eqref{eq:alpha-star} we see,
that any isometric embedding of a lattice $\lat$ into $\lat [\Z]^N$ or
$\lat [\Z]^N(3)$ yields then immediately a Jacobi form for
$\lat$. More precisely, we have
\begin{Proposition}
  \label{prop:pullback}
  For any isometric embedding $\alpha:\lat\rightarrow\lat[\Z]^N$, one
  has
  \begin{equation}
    \label{eq:thetablock}
    \alpha^*\distjac {\lat [\Z]_N}(\tau,z)
    =
    \prod_{j=1}^N \vartheta\big(\tau,\alpha_j(z)\big)
    \in
    J_{N/2,\lat}\big(\ve^{3N}\big),
  \end{equation}
  and for any isometric embedding
  $\alpha:\lat\rightarrow\lat[\Z]^N(3)$, one has
  \begin{equation}
    \label{eq:secthetablock}
    \prod_{j=1}^N \sectheta\big(\tau,\alpha_j(z)\big)
    \in
    J_{N/2,\lat}\big(\ve^{N}\big)
    .
  \end{equation}
  Here the $\alpha_j$ denote the coordinate functions of $\alpha$.
\end{Proposition}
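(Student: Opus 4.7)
The plan is to combine three ingredients already established in the excerpt: the tensor product of Jacobi forms ($\phi\otimes\psi \in J_{k+k',\,\lat\lsum\lat'}(\ve^{h+h'})$), the pullback along an isometric map from~\eqref{eq:alpha-star}, and Lemma~\ref{lem:theta} asserting $\vartheta\in J_{\frac12,\lat[\Z]}(\ve^3)$ and $\sectheta\in J_{\frac12,\lat[\Z](3)}(\ve)$. With these in place the proof is largely a matter of bookkeeping of weights, indices, and characters.

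First I would iterate the tensor product $N$ times. Applying it to $N$ copies of $\vartheta$ gives a form on $\HP\times(\C\otimes\Z^N)$ whose weight is $N/2$, whose index is the orthogonal sum $\lat[\Z]\lsum\cdots\lsum\lat[\Z]=\lat[\Z]^N$, and whose character is $\ve^{3N}$. By~\eqref{eq:singular-form-for-Zn} this function is precisely $\distjac{\lat[\Z]^N}(\tau,\_)$, so $\distjac{\lat[\Z]^N}\in J_{N/2,\,\lat[\Z]^N}(\ve^{3N})$. The analogous iteration for $\sectheta$ yields $\sectheta(\tau,w_1)\cdots\sectheta(\tau,w_N)\in J_{N/2,\,\lat[\Z]^N(3)}(\ve^N)$, using the identity $\lat[\Z](3)\lsum\cdots\lsum\lat[\Z](3)=\lat[\Z]^N(3)$ of scaled orthogonal sums.

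Next I would apply the pullback map. For any isometric embedding $\alpha\colon\lat\to\lat[\Z]^N$, formula~\eqref{eq:alpha-star} produces $\alpha^*\distjac{\lat[\Z]^N}\in J_{N/2,\lat}(\ve^{3N})$, and unpacking the definition gives
\begin{equation*}
  (\alpha^*\distjac{\lat[\Z]^N})(\tau,z)
  =\prod_{j=1}^N \vartheta\bigl(\tau,w_j(\alpha(z))\bigr)
  =\prod_{j=1}^N \vartheta\bigl(\tau,\alpha_j(z)\bigr),
\end{equation*}
since by definition the coordinate function $\alpha_j$ of $\alpha$ equals $w_j\circ\alpha$. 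The same argument applied to $\sectheta$ and an isometric embedding $\alpha\colon\lat\to\lat[\Z]^N(3)$ gives the second half of the proposition.

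There is no serious obstacle in this argument; the only step requiring some care is the matching of the orthogonal direct sum of $N$ copies of the rank-one lattices $\lat[\Z]$ (respectively $\lat[\Z](3)$) with the ambient lattice $\lat[\Z]^N$ (respectively $\lat[\Z]^N(3)$), so that the resulting tensor product is precisely the form $\distjac{\lat[\Z]^N}$ available for pullback. Once that identification is made, the proposition reduces to the functorial properties of $J_{k,\lat}(\chi)$ already recorded above.
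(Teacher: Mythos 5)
Your argument is correct and coincides with the paper's own (implicit) proof: the paper derives the proposition as an immediate consequence of Lemma~\ref{lem:theta}, the tensor-product construction $\phi\otimes\psi\in J_{k+k',\,\lat\lsum\lat'}(\ve^{h+h'})$ applied $N$ times to obtain $\distjac{\lat[\Z]^N}\in J_{N/2,\,\lat[\Z]^N}(\ve^{3N})$ (and its $\sectheta$-analogue), and the pullback map~\eqref{eq:alpha-star}. The bookkeeping of weights, indices, and characters is exactly as you describe.
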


\begin{Theorem}
  For any integral positive definite lattice $\lat$ and any integer
  $h$, there exist $k\gg 0$ and $k\equiv h/2\bmod \Z$ such that
  $J_{k,\lat}(\ve^h)\not=0$.
\end{Theorem}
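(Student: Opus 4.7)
The plan is to first produce any non-zero Jacobi form $\phi_0\in J_{k_0,\lat}(\ve^{h_0})$ for the given lattice~$\lat$, and then to adjust the character to~$\ve^h$ by multiplying with a suitable power of the Dedekind eta function~$\eta$. Since $\eta$ is a modular form of weight~$1/2$ with character~$\ve$, the product $\phi_0\eta^m$ lies in $J_{k_0+m/2,\,\lat}(\ve^{h_0+m})$. Using that $\ve$ has order~$24$ (Proposition~\ref{prop:G-abelianization}), one picks a non-negative integer~$m$ with $m\equiv h-h_0\pmod{24}$; the corresponding product is then a non-zero element of $J_{k,\lat}(\ve^h)$ with $k=k_0+m/2$, and the congruence $k\equiv h/2\pmod\Z$ is automatic from the preceding Proposition applied to~$\phi_0$.

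To produce the initial form $\phi_0$, I would apply Proposition~\ref{prop:pullback}. Any isometric embedding $\alpha\colon\lat\hookrightarrow\lat[\Z]^N$ furnishes the non-zero Jacobi form $\alpha^*\distjac{\lat[\Z]^N}\in J_{N/2,\lat}(\ve^{3N})$; it is non-zero because each factor $\vartheta(\tau,\alpha_j(z))$ is non-trivial as a holomorphic function whenever the linear form~$\alpha_j$ is non-zero, and one can always arrange this by discarding vanishing coordinates of the embedding.

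The hard step is therefore the existence of an isometric embedding $\lat\hookrightarrow\lat[\Z]^N$, equivalently a representation of the Gram matrix of~$\lat$ as $A^tA$ for some integer matrix~$A$. For rank at most six this is the classical Mordell--Ko theorem, but in higher rank there are positive definite integral lattices (notably~$E_8$) whose Gram matrices admit no such representation, since their root systems do not embed as subsystems of any~$D_N$. For those exceptional cases one must invoke a different source of Jacobi forms---either the theta series $\theta_\lat(\tau,z)=\sum_{x\in L}e(\tau\bif(x)+\bif(x,z))$ itself (which is directly a Jacobi form of weight~$n/2$ for even unimodular~$\lat$), or, more systematically, the Weil-representation theoretic machinery underlying the isomorphism~\eqref{eq:fundamental-isomorphism} from the introduction, from which non-vanishing for sufficiently large~$k$ follows by pairing the finite-dimensional $G$-module~$\Theta(\lat)\otimes\C(\ve^h)$ against modular forms of high weight.
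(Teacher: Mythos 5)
Your first half coincides with the paper's own proof: for a lattice admitting an isometric embedding $\alpha:\lat\rightarrow\lat [\Z]^N$, both arguments take the pullback $\alpha^*\distjac {\lat [\Z]^N}\in J_{N/2,\lat}(\ve^{3N})$ of Proposition~\ref{prop:pullback} (after discarding identically vanishing coordinates of $\alpha$) and then multiply by a power of $\eta$ to reach the prescribed character $\ve^h$. Two small factual corrections: the classical embedding results cover rank at most \emph{five}, not six --- the paper's citation of \cite{Conway-Sloane-V} records $E_6$ (rank $6$) as the first lattice admitting no embedding into any $\lat [\Z]^N$ --- and your fallback of using the full theta series of $\lat$ only rescues the even unimodular lattices, a very small portion of the exceptional cases ($E_6$ and $E_7$, for instance, are not unimodular).

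The genuine gap is in the non-embeddable case. Asserting that non-vanishing for large $k$ ``follows by pairing $\Theta(\lat)\otimes\C(\ve^h)$ against modular forms of high weight'' restates what must be proved rather than proving it: via Theorem~\ref{thm:a} the claim is equivalent to the non-triviality of the $G$-invariants in $M_{k-n/2}\otimes\Theta(\lat)\otimes\C(\ve^h)$, and some quantitative input is required to see that this invariant subspace is eventually non-zero. The paper supplies exactly this input: the explicit dimension formula of Theorem~\ref{thm:dimension-formula} gives the asymptotic \eqref{eq:dimensions-asymptotics}, namely $\dim J_{k,\lat}(\ve^h)=\tfrac k{24}\bigl(\det(\lat)+(-1)^{k-h/2+n_2}2^{n-n_2}\bigr)+O(1)$, and choosing the parity of $k-h/2$ that makes the sign $+1$ yields a strictly positive leading coefficient, hence $J_{k,\lat}(\ve^h)\neq 0$ for suitable large $k\equiv h/2\bmod\Z$. (An alternative way to close your argument, which the paper mentions but does not carry out, would be an explicit construction of Eisenstein or Poincar\'e series.) Without one of these inputs your proof does not go through for lattices such as $E_6$, $E_7$, $E_8$, or any lattice containing them as an orthogonal summand.
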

\begin{proof}
  If $\lat$ admits an isometric embedding into the standard lattice
  $\lat [\Z]^N$ for some $N$ then the assertion of the theorem follows
  from the construction~\eqref{eq:thetablock} (by stripping off the
  components of $\alpha$ which are identically zero, and by
  multiplying the left hand side of ~\eqref{eq:thetablock} by a
  suitable non-negative power of the $\eta$-function if necessary, for
  obtaining the given character~$\ve^h$).

  However, not every integral lattice can be embedded into $\lat
  [\Z]^N$ for some~$N$. The first rank where this occurs is rank $6$,
  where the lattice~$E_6$ does not admit such an
  embedding~\cite[Thm.~1]{Conway-Sloane-V}.

  If a lattice $\lat$ does not admit embeddings in $\lat [\Z]^N$ the
  theorem follows from the dimension formula in
  Theorem~\ref{thm:dimension-formula} below, according to which we
  have
  \begin{equation}
    \label{eq:dimensions-asymptotics}
    \dim J_{k,\lat}(\ve^h)
    =
    \tfrac k{24}
    (
    \det(\lat) +
    (-1)^{k-h/2+n_2}\,2^{n-n_2}
    )
    +O(1)
  \end{equation}
  for $k\to\infty$, $k\equiv h/2\bmod \Z$. Here $n_2$ is the rank of
  the unimodular Jordan constituent of $\lat$ over $\Z_2$ (see
  Lemma~\ref{lem:weak-Jordan-decomposition}).

  Note that the last argument actually shows that
  $J_{k,\lat}(\ve^h)\not=0$ for all sufficiently large~$k\equiv
  h/2\bmod\Z$.

  Alternatively, one could also apply the usual methods and use
  suitably defined Eisenstein series or, more generally, Poincar\'e
  series, to prove the existence of Jacobi forms for given weight,
  lattice and character. However, we shall not pursue such constructions
  in this article.
\end{proof}

In the definition of Jacobi forms we assumed from the beginning on
that the index is positive definite. However, as the previous
discussion indicated there might also be a theory for semi-positive
definite lattices. For such a theory one would have to modify suitably
the property~\eqref{it:expansion} in
Definition~\ref{def:Jacobi-forms-of-lattice-index} since $\sh L$ as
defined here, is, for a degenerate lattice, no longer a lattice. It is
not difficult to find the right definition for holomorphicity at
infinity (e.g.~by using the isomorphism of
Proposition~\ref{prop:semi-definite-index} below). However, as we
shall see in a moment the consideration of semi-positive definite
indices can easily be reduced to positive definite ones.
  
For this let $\lat=(L,\bif)$ be an integral semi-positive definite
lattice, and let $R=\big\{x\in L:\bif(x,L)=0\big\}$ be its {\em
  radical}. The quotient $L/R$ is torsion-free (if $nx\in R$ for some
integral $n>0$, then $\bif(nx,L)=0$ implies $\bif(x,L)=0$, i.e.~$x\in
R$), and $\bif$ factors through a bilinear form~$\underline\bif$ on
$L/R$. The lattice $\lat /R:=(L/R,\underline \bif)$ is positive
definite (which follows from Silverster's theorem on using that
$\lat/R$ is by construction non-degenerate and semi-positive
definite).
\begin{Proposition}
  \label{prop:semi-definite-index}
  Let $\lat=(L,\bif)$ be an integral semi-positive definite lattice
  and let $R$ be its radical. For $k$ in $\frac12\Z$ and an integer
  $h$ denote by $\mathfrak{J}_{k,\lat}(\ve^h)$ the space of
  holomorphic functions on $\HP\times (\C\otimes L)$
  satisfying~\eqref{it:G-invariance},~\eqref{it:L-invariance} of
  Definition~\ref{def:Jacobi-forms-of-lattice-index}. The canonical
  projection $\alpha:\lat\rightarrow \lat/R$ defines an isomorphism
  \begin{equation*}
    \alpha^*:\mathfrak{J}_{k,\lat/R}(\ve^h)
    \rightarrow
    \mathfrak{J}_{k,\lat}(\ve^h)
    ,\quad
    \alpha^*(\phi)(\tau,z)=\phi(\tau,\alpha(z))
    .
  \end{equation*}
\end{Proposition}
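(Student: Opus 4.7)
The plan is to split the claim into three steps: well-definedness of $\alpha^*$, injectivity, and surjectivity, with the real content concentrated in the third.

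First I would check that $\alpha^*$ is well-defined. The point is that $\bif$ factors through $\underline\bif$ on $L/R$, that is, $\bif(x,y)=\underline\bif(\alpha(x),\alpha(y))$ for $x,y\in L$, and by $\C$-linear extension the same identity holds on $\C\otimes L$. Consequently $\bif(z)=\underline\bif(\alpha(z))$ for all $z\in\C\otimes L$, and the transformation laws~\eqref{it:G-invariance} and~\eqref{it:L-invariance} for $\phi\in\mathfrak J_{k,\lat/R}(\ve^h)$ transfer verbatim to $\alpha^*\phi$ after substituting $z\mapsto\alpha(z)$ and $x,y\mapsto\alpha(x),\alpha(y)$. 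Holomorphicity of $\alpha^*\phi$ is immediate since $\alpha$ extends to a $\C$-linear surjection $\C\otimes L\twoheadrightarrow\C\otimes L/R$.

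Injectivity is immediate: $\alpha$ is surjective, hence $\alpha^*\phi\equiv 0$ forces $\phi\equiv 0$.

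The main work is surjectivity. Fix $\psi\in\mathfrak J_{k,\lat}(\ve^h)$; I want to show that for every $\tau\in\HP$ the function $z\mapsto\psi(\tau,z)$ only depends on $\alpha(z)$. For $r\in R$ one has $\bif(r)=0$ and $\bif(r,z)=0$ for all $z$, so the $L$-invariance \eqref{it:L-invariance} applied with $(x,y)=(0,r)$ and with $(x,y)=(r,0)$ yields
\begin{equation*}
\psi(\tau,z+r)=\psi(\tau,z),\qquad \psi(\tau,z+r\tau)=\psi(\tau,z)
\end{equation*}
for all $r\in R$ and all $z\in\C\otimes L$. Thus for fixed $\tau$ and fixed $z_0\in\C\otimes L$, the map $w\mapsto \psi(\tau,z_0+w)$ on $\C\otimes R$ is holomorphic and invariant under the $\Z$-module $R\tau+R$. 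Since $\tau\notin\R$ and $R$ has full rank in $\R\otimes R$, the module $R\tau+R$ is a full-rank lattice in $\C\otimes R$, so the quotient is a compact complex torus and the function descends to a holomorphic function on this torus, hence is constant in $w$. Therefore $\psi(\tau,z)$ depends only on the image $\alpha(z)$.

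It remains to package this dependence into a Jacobi form on $\lat/R$. Pick a $\C$-linear section $s:\C\otimes L/R\to\C\otimes L$ of $\alpha$, and define $\phi(\tau,\bar z):=\psi(\tau,s(\bar z))$. Joint holomorphicity in $(\tau,\bar z)$ is clear, and by the previous paragraph $\phi$ is independent of the choice of section, so in particular $\alpha^*\phi=\psi$. The transformation laws~\eqref{it:G-invariance} and~\eqref{it:L-invariance} for $\phi$ are obtained by restricting those of $\psi$ along $s$ and using $\bif\circ s=\underline\bif$; since any $\bar x,\bar y\in L/R$ lift to $x,y\in L$ and $\alpha^*\phi=\psi$, invariance of $\psi$ under $(x,y)$ is exactly invariance of $\phi$ under $(\bar x,\bar y)$. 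The hardest step is the descent argument in the previous paragraph; once compactness of the torus $\C\otimes R/(R\tau+R)$ is invoked, the rest is formal.
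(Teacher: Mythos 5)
Your proposal is correct and follows essentially the same route as the paper: the only substantive point is surjectivity, which both you and the authors establish by observing that \eqref{it:L-invariance} with $x,y\in R$ makes $w\mapsto\psi(\tau,z+w)$ periodic under $R\tau+R$ in $\C\otimes R$, hence constant by compactness of the resulting complex torus. Your added details (well-definedness via $\bif=\underline\bif\circ\alpha$, and descending $\psi$ through a linear section) are exactly the steps the paper declares obvious.
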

\begin{proof}
  The only non-obvious fact is that $\alpha^*$ is surjective. For this
  let~$\psi$ be an element of $\mathfrak{J}_{k,\lat}(\ve^h)$. We have
  to show that $\psi(\tau,z)$ depends on~$z$ only modulo~$\C\otimes
  R$. Indeed, fix $\tau$ and $z$ and consider the function
  $f(w):=\psi(\tau,z+w)$ for $w$ in $\C\otimes R$ ($\subseteq
  \C\otimes L$). The transformation law~\eqref{it:L-invariance}
  implies than $f(w)=f(w+x\tau+y)$ for all $x$ and $y$ in $R$. In
  other words $f$ factors through a holomorphic function on the
  complex torus $\C\times R/\tau\otimes R+R$, and thus reduces to a
  constant.
\end{proof}

We conclude this section by explaining the relation between the
lattice index and matrix index Jacobi forms. If $F$ is a symmetric
positive definite $n\times n$ matrix with entries in $\frac12\Z$, we
set
\begin{equation*}
  J_{k,F}(\chi)
  =
  J_{k,\lat_F}(\chi)
  ,
\end{equation*}
where we use
\begin{equation*}
  \lat_F=\big(\Z^n,(x,y)\mapsto 2xFy^t\big)
  ,
\end{equation*}
and identify $\C^n$ with $\C\otimes_\Z\Z^n$ via the map
$\C\times\Z^n\mapsto \C^n$, $(z,x)\mapsto zx$.  The
spaces~$J_{k,F}(\chi)$ coincide with the spaces of Jacobi forms of
matrix index $F$ as they were, for half integral $F$\footnote{A
  symmetric matrix $F$ is called {\em half integral} if its entries
  are all in $\frac12\Z$ and its diagonal entries are integral, or
  equivalently, if $\lat_F$ is even.} considered in~\cite{Arakawa},
\cite{Boylan-Thesis}, \cite{Clery-Gritsenko},
and~\cite{Skoruppa-Schiermonnikoog}. It is clear that any integral
lattice $\lat$ is isometric to a lattice $\lat_F$ for a suitable $F$,
and then the spaces $J_{k,\lat}(\chi)$ and $J_{k,F}(\chi)$ are mapped
isomorphically to each other via the corresponding
map~\eqref{eq:alpha-star}. Thus the theories of Jacobi forms with {\em
  lattice index} as introduced here and the usual theory of Jacobi
forms with matrix-index are equivalent.

\section{Jacobi forms as theta functions}
\label{sec:theta-expansion}

In this section we show how to view Jacobi forms as vector-valued
modular forms. The advantage of this point of view is that we can
apply well-known results from the theory of elliptic modular forms to
rapidly deduce corresponding statements for Jacobi forms. This will
e.g.~be the strategy of the next section for obtaining dimension
formulas for the spaces $J_{k,\lat}(\ve^h)$.
 
As another application we describe a natural isomorphism between the
spaces $J_{k,\lat}(\chi)$ for singular weight ($k=\frac n2$) and
critical weight ($k=\frac{n+1}2$) and the spaces of $G$-invariant
vectors in $\Theta(\lat)\otimes\C(\chi)$, where $\Theta(\lat)$ denotes
a natural $G$-module associated to $\lat$. Here $n$ denotes the rank
of $\lat$.

For relating Jacobi forms to vector-valued modular forms we need some
preparations.  For $k$ in $\frac12\Z$, let $M_k\big(\Gamma(N)\big)$ be
the space of modular forms of weight $k$ on the principal congruence
subgroup $\Gamma(N)$ (the kernel of the map `reduction modulo $N$' on
$\SL$), where we assume that $N$ is divisible by $4$ if $k$ is not an
integer. We set
\begin{equation*}
  M_k=\bigcup_{N\geq 1}M_k\big(\Gamma(N)\big)
  .
\end{equation*}
The space $M_k$ becomes a $G$-module via the left action
\begin{equation*}
  (\alpha,h)\mapsto(h\mid_k\alpha^{-1})(\tau)=h(A\tau)\,w(\tau)^{-2k}
  \qquad (A=(\alpha,w))
  .
\end{equation*}
This statement is, for half integral $k$, not quite obvious. Indeed,
the transformation law for an $f$ in $M_k\big(\Gamma(N)\big)$ is
$f|_k(\gamma,w) = \kappa(\gamma)^{2k}\,f$ for all $\gamma$ in
$\Gamma(N)$, where $\kappa$ is the linear character of the inverse
image in $G$ (under the projection on the first component) of
$\Gamma_0(4)$ defined by
\begin{equation}
  \label{eq:Hecke-character}
  \kappa((A,w))=\theta(A\tau)/\theta(\tau)w(\tau)
  ,
\end{equation}
where $\theta=\sum_{r\in\Z}q^{r^2}$ (see~\cite[eq.~(1.10)]{Shimura} or
\cite[p.~287]{Hecke-1}).
One has to show that~$\kappa(\gamma)$ depends only on the
$\SL$-conjugacy class of~$\gamma$, which we leave to the reader.

Next, to an integral positive definite lattice $\lat=(L,\bif)$, we
associate a certain finite dimensional $G$-module $\Theta(\lat)$,
which we shall now describe.  Let $W(\lat)$ denote the space of maps
$\lambda:\sh L\rightarrow\C$ such that
$\lambda(r+x)=e(\bif(x))\,\lambda(r)$ for all $r$ in $\sh L$ and~$x$
in $L$. For $\lambda$ in $W(\lat)$ define a function on $\HP\times
(\C\otimes L)$ by
\begin{equation}
  \label{eq:basic-theta-functions}
  \vartheta_{\lat,\lambda}(\tau,z)
  =
  \sum_{r\in\sh L}
  \lambda(r)\,
  q^{\bif(r)}e\big(\bif(r,z)\big)
  ,
\end{equation}
and set
\begin{equation*}
  \Theta(\lat)
  :=
  \big\{
  \vartheta_{\lat,\lambda}:\lambda\in W(\lat)
  \big\}
  .
\end{equation*}
It is easy to verify that $\lambda\mapsto \vartheta_{\lat,\lambda}$
defines an isomorphism of $W(\lat)$ and~$\Theta(\lat)$. Note that
$W(\lat)$ has dimension $|\sh L/L|=\det(\lat)$.
\begin{Proposition}
  \label{prop:KloostermanX}
  The application $(\alpha,\phi)\mapsto \phi|\alpha^{-1}$ defines a
  $G$-module structure on $\Theta(\lat)$.  Here $\phi|\alpha$, for
  $\alpha=(A,w)$ in $G$, is defined like the left hand side
  of~\eqref{it:G-invariance} in
  Definition~\ref{def:Jacobi-forms-of-lattice-index} with $k$ replaced
  by $n/2$.
\end{Proposition}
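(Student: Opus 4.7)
The plan is to fix the basis
$$\vartheta_{\lat,\mu}(\tau,z) := \sum_{r \in \mu} q^{\bif(r)} e\bigl(\bif(r,z)\bigr),\qquad \mu\in\sh L/L,$$
of $\Theta(\lat)$ and to check the $G$-action on generators. First I would observe that the weight-$n/2$ slash operator defines a right $G$-action on holomorphic functions on $\HP\times(\C\otimes L)$: the associativity $(\phi|\alpha)|\beta=\phi|(\alpha\beta)$ reduces, via the multiplication law $(A,w)(B,v)=(AB,w(B\tau)v(\tau))$ in $\Mp$, to the standard multiplicativity of the cocycle $w(\tau)^{-n}\cdot e(-c\bif(z)/(c\tau+d))$ under composition. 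Inverting turns this into a left action, so the remaining task is to prove that $\Theta(\lat)$ is stable.

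Since $G$ is generated by the canonical lifts $\widetilde T=(T,1)$ and $\widetilde S=(S,\sqrt\tau)$, it suffices to check these two elements. Under $\widetilde T$ one computes $\vartheta_{\lat,\mu}|\widetilde T=e(\bif(r_\mu))\,\vartheta_{\lat,\mu}$ for any $r_\mu\in\mu$; the phase depends only on $\mu$ because, for $r\in\sh L$ and $x\in L$, one has $\bif(r+x)-\bif(r)=\bif(r,x)+\bif(x)\equiv\bif(x,x)\equiv 0\pmod\Z$. Thus $\widetilde T$ acts diagonally in the basis $\{\vartheta_{\lat,\mu}\}$.

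The essential step is the $\widetilde S$-transform
$$\tau^{-n/2}\,e\bigl(-\bif(z)/\tau\bigr)\,\vartheta_{\lat,\mu}\bigl(-1/\tau,\,z/\tau\bigr).$$
After completing the square, so that the exponent in the elliptic variable becomes $-\bif(r-z)/\tau$ with $r$ running over $\mu$, Poisson summation on the even sublattice $\ev L$ converts this into a dual-side sum; the cosets actually supporting the output recombine to produce a Gauss-sum-weighted linear combination of the $\vartheta_{\lat,\nu}$ with $\nu\in\sh L/L$. For the proposition itself only this qualitative closure property is needed, but the explicit coefficients are precisely those of the Weil (respectively shadow) representation attached to $\lat$. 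I expect the Poisson-summation step to be the main obstacle, especially when $\lat$ is odd: there $\sh L$ is a non-trivial coset inside $\dual{\ev L}/\dual L$, and one must verify that the transformed sum is supported precisely on the shadow cosets, so that the output again lies in $\Theta(\lat)$ rather than in the strictly larger space of theta series indexed by $\dual{\ev L}/\ev L$. Once this is checked, stability under $\widetilde T$ and $\widetilde S$ together with the generation of $G$ by these lifts yields the claimed $G$-module structure on $\Theta(\lat)$.
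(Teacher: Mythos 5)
Your outline coincides in structure with the paper's proof: both arguments reduce the proposition to checking that $\Theta(\lat)$ is stable under the two generators $t=(T,1)$ and $s=(S,\sqrt\tau)$ (which suffices because the weight-$n/2$ slash is already an action on all holomorphic functions on $\HP\times(\C\otimes L)$ and $\Theta(\lat)$ is finite dimensional), and your $t$-computation, including the observation that $\bif(r+x)-\bif(r)\equiv 2\bif(x)\equiv 0\bmod\Z$, is exactly the paper's. The organisational difference is that the paper does not redo the Poisson summation at all: for even $\lat$ it quotes Kloosterman's transformation formulas, and for odd $\lat$ it uses the inclusion $\Theta(\lat)\subseteq\Theta(\ev{\lat})$ and only has to verify that this subspace is preserved.

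That verification is precisely the step you defer (``one must verify that the transformed sum is supported precisely on the shadow cosets''), and since it is the only genuinely new content in the odd case, your proof as written stops just short of the point of the proposition. The good news is that it does not require re-examining the Poisson summation; a two-line character-sum argument closes it. Extend $\lambda\in W(\lat)$ by zero to a function on $\dual{(\ev L)}$ and apply the even-lattice $s$-formula for $\ev{\lat}$: up to the constant $e_8(n)/\sqrt{4\det(\lat)}$ one obtains $\{s.\lambda\}(x)=\sum_{y\in\sh L/\ev L}e\big(\bif(x,y)\big)\lambda(y)$ for $x\in\dual{(\ev L)}$. Group the sum into pairs $y$, $y+u$ with $u\in L\setminus\ev L$; since $\lambda(y+u)=e(\bif(u))\lambda(y)=-\lambda(y)$, each pair contributes $e\big(\bif(x,y)\big)\lambda(y)\big(1-e(\bif(x,u))\big)$, which vanishes whenever $\bif(x,u)\in\Z$, i.e.\ whenever $x\in\dual L$. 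As $\dual{(\ev L)}/\dual L$ has order two with $\sh L$ as its nontrivial coset, the transform is therefore supported on $\sh L$; and for $x\in\sh L$ one has $e(\bif(x,u))=-1$, so the summands depend only on $y\bmod L$ and $s.\lambda$ again satisfies the defining functional equation of $W(\lat)$. With this inserted your argument is complete and is, in substance, the paper's.
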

The proposition is well-known for even
lattices~\cite{Kloosterman-I}\footnote{In fact,~\cite{Kloosterman-I}
  treats to a certain extent also the case of odd lattices; the
  translation of his results into our language seems to be somewhat
  tedious though.}. It is not hard to reduce the case of an odd
lattice to the case of even ones. We shall give below its simple proof
and the proof of the following supplement to the proposition.

\begin{Supplement}[to Proposition~\ref{prop:KloostermanX}]
  Let $\ell$ be the level of $\lat$. For every~$\gamma$ in $G$ with
  first component in $\Gamma(\ell)$ and every $\phi$ in
  $\Theta(\lat)$, one has $\phi|\gamma = \kappa_n(\gamma)\phi$. Here
  $n$ is the rank of $\lat$, and $\kappa_n(\gamma)=\kappa(\gamma)^n$
  if $n$ is odd (with the linear character $\kappa$
  from~\eqref{eq:Hecke-character}) and $\kappa_n(\gamma)=1$ if $n$ is
  even.
\end{Supplement}
We remark that the supplement makes indeed sense since the level of a
lattice of odd rank is divisible by~$4$ so that $\Gamma(\ell)\subseteq
\Gamma_0(4)$ if $n$ is odd.

It will be useful to note the two following formal properties of the
application $\lat\mapsto \Theta(\lat)$. If $\lat'$ is a sublattice of
$\lat$, then $\sh L$ is a union of $L'$-cosets in $\sh{L'}$, so that
every $\vartheta_{\lat,\lambda}$ is in $\Theta(\lat')$, which contains
hence $\Theta(\lat)$ as $G$-submodule. The second useful property is
that we can identify the $G$-modules
$\Theta(\lat)\otimes\Theta(\lat')$ and $\Theta(\lat\lsum\lat')$. The
identifying map is induced by the bilinear map $(\phi,\psi)\mapsto
\phi\otimes\psi$, where we use again
$\{\phi\otimes\psi\}(\tau,z_1\oplus z_2)=\phi(\tau,z_1)\psi(\tau,z_2)$
for $z_1\oplus z_2$ in $(\C\otimes L)\oplus(\C\otimes
L')=\C\otimes(L\oplus L')$.

Finally, for a linear character $\chi$ of $G$, we use $\C(\chi)$ for
the $G$-module with underlying vector space $\C$ and the $G$-action
$(\alpha,z)\mapsto \chi(\alpha) z$.  We can now formulate the main
result of this section.
\begin{Theorem}
  \label{thm:a}
  For any $k$ in $\frac12\Z$, any integral positive definite lattice
  $\lat$ of rank $n$ and any integer~$h$, there is a natural
  isomorphism
  \begin{equation*}
    J_{k,\lat}(\ve^h)
    \isom
    \big(M_{k-\frac n2}\otimes \Theta(\lat)\otimes\C(\ve^h)\big)^G
    .
  \end{equation*}
\end{Theorem}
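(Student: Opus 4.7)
The plan is to establish the theta decomposition of Jacobi forms, which provides the claimed isomorphism. Given $\phi\in J_{k,\lat}(\ve^h)$, the first step is to use the elliptic transformation law~\eqref{it:L-invariance} applied to the Fourier expansion~\eqref{it:expansion} to show that the Fourier coefficient $c(n,r)$ depends only on the discriminant $n-\bif(r)$ and on the class $\bar r\in \sh L/L$, up to the expected phase $e(\bif(x))$ when $r$ is replaced by $r+x$ with $x\in L$. Concretely, substituting $z\mapsto z+x\tau$ into~\eqref{it:L-invariance} and matching the coefficients of $q^{n'}e(\bif(z,r'))$ yields the relation $c(n,r)=e(\bif(x))\,c(n-\bif(x,r)+\bif(x),\,r-x)$, and both the quantity $n-\bif(r)$ and (under the phase twist) the coset $\bar r$ are invariants of this action. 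Regrouping the Fourier series by cosets therefore gives a unique decomposition
\begin{equation*}
  \phi(\tau,z)
  =
  \sum_{\bar r\in\sh L/L} h_{\bar r}(\tau)\,\vartheta_{\lat,\lambda_{\bar r}}(\tau,z),
\end{equation*}
where $\{\lambda_{\bar r}\}$ is the natural basis of $W(\lat)$ with $\lambda_{\bar r}$ supported on the coset $\bar r$, and where the $h_{\bar r}$ are holomorphic functions on $\HP$ whose expansion involves only powers $q^D$ with $D\ge 0$ (the holomorphicity condition at infinity in~\eqref{it:expansion} corresponds exactly to this).

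The second step is to translate the modular transformation law~\eqref{it:G-invariance} for $\phi$ into a transformation law for the vector $(h_{\bar r})$. Since $\vartheta_{\lat,\lambda_{\bar r}}\in\Theta(\lat)$ and $\Theta(\lat)$ is a $G$-module by Proposition~\ref{prop:KloostermanX} with the action defined by the slash operator of weight $n/2$, the factor $w(\tau)^{-2k}=w(\tau)^{-2(k-n/2)}\cdot w(\tau)^{-n}$ splits the Jacobi slash action into a product of the weight $k-n/2$ slash action on the $h_{\bar r}$ and the $G$-action on the theta basis. Uniqueness of the decomposition then forces the tensor $\sum_{\bar r} h_{\bar r}\otimes \vartheta_{\lat,\lambda_{\bar r}}\otimes 1$ to be $G$-invariant in $M_{k-n/2}\otimes\Theta(\lat)\otimes\C(\ve^h)$ once we verify that each $h_{\bar r}$ really belongs to $M_{k-n/2}$. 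This is where the Supplement to Proposition~\ref{prop:KloostermanX} enters: it forces $\Theta(\lat)$ to transform by the correct character $\kappa_n$ on $\Gamma(\ell)$, which combined with the $G$-invariance of the whole tensor shows that $h_{\bar r}\mid_{k-n/2}\gamma=\kappa(\gamma)^{2(k-n/2)}\,h_{\bar r}$ for $\gamma$ in a principal congruence subgroup, so the $h_{\bar r}$ are genuine modular forms of weight $k-n/2$.

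For the converse, starting from a $G$-invariant element $\Phi=\sum_j f_j\otimes \psi_j\otimes 1\in (M_{k-n/2}\otimes \Theta(\lat)\otimes\C(\ve^h))^G$, one defines $\phi(\tau,z)=\sum_j f_j(\tau)\psi_j(\tau,z)$. Property~\eqref{it:L-invariance} is immediate from the definition of $\Theta(\lat)$ as the span of sums over $\sh L$ with the correct $L$-periodicity built into $W(\lat)$; property~\eqref{it:G-invariance} follows from the $G$-invariance of $\Phi$ by reversing the factor-of-automorphy computation above; and holomorphicity at infinity for $\phi$ reduces to holomorphicity at infinity for the $f_j$ since the theta series contribute only terms $q^{\bif(r)}$ with $\bif(r)\ge 0$ (by positive definiteness of $\lat$). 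Naturality in $\lat$ and in $k,h$ is clear by construction.

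The main technical obstacle will be the careful bookkeeping of factors of automorphy in the half-integral weight case: one must split the factor $w(\tau)^{-2k}$ correctly when $n$ is odd, verify that the character $\kappa_n$ appearing in the Supplement exactly compensates the $\kappa^{2(k-n/2)}$ appearing from the definition of $M_{k-n/2}$ via~\eqref{eq:Hecke-character}, and check that the congruence subgroup on which the $h_{\bar r}$ are modular is indeed $\Gamma(\ell)$ (so that the direct limit $M_{k-n/2}=\bigcup_N M_{k-n/2}(\Gamma(N))$ contains them). Once these consistency checks are in place, the theta decomposition provides the required natural bijection in both directions, and it is clearly $\C$-linear, hence an isomorphism.
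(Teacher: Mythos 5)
Your overall strategy is the same as the paper's: decompose $\phi$ along the basic theta functions $\vartheta_{\lat,\lambda}$ using the elliptic transformation law (your coefficient relation $c(n,r)=e(\bif(x))\,c(n-\bif(x,r)+\bif(x),r-x)$ is exactly the paper's identity~\eqref{eq:fourier-coeffs-comp} with $x$ replaced by $-x$), get injectivity from the linear independence of the $\vartheta_{\lat,\lambda}(\tau,\cdot)$ for fixed $\tau$, and use the Supplement to Proposition~\ref{prop:KloostermanX} to pin down the transformation of the coefficient functions. The converse direction is also handled the same way.

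There is, however, one concrete gap in the step where you conclude that $h_{\bar r}\mid_{k-n/2}\gamma=\kappa(\gamma)^{2(k-n/2)}h_{\bar r}$ on a principal congruence subgroup. What the Supplement plus linear independence actually yield is
\begin{equation*}
  \ve^h(\gamma)\,h_{\bar r}
  =
  \kappa_n(\gamma)\,h_{\bar r}\mid_{k-n/2}\gamma
\end{equation*}
for $\gamma$ with first component in $\Gamma(\ell)$. To convert this into the defining transformation law of $M_{k-n/2}\big(\Gamma(N)\big)$ you must additionally know that $\ve^h(\gamma)=\kappa_h(\gamma)=\kappa_{2k}(\gamma)$ for $\gamma$ in some \emph{principal congruence} subgroup. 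This is not a routine bookkeeping check: $\ve^h$ has finite order, but a finite-index subgroup of $\SL$ (such as the kernel of $\ve^h\kappa_h^{-1}$) need not be a congruence subgroup, and $M_{k-n/2}$ is by definition the union of the spaces $M_{k-n/2}\big(\Gamma(N)\big)$. The paper supplies exactly this ingredient as Lemma~\ref{lem:epsilon-kappa}, proved by exhibiting $\eta/\theta$ as a modular unit on a congruence subgroup and invoking Wohlfahrt's theorem; the subgroup one obtains is $\Gamma(N)$ with $N$ a common multiple of the level $\ell$ and of $24/\gcd(h,24)$, not $\Gamma(\ell)$ as you assert. Your list of ``consistency checks'' addresses only the interaction between $\kappa_n$ and $\kappa^{2(k-n/2)}$ and misses the $\ve^h$ versus $\kappa_h$ comparison entirely, so as written the surjectivity argument does not close.
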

\begin{Remark}
  As the proof will show we can replace the space $M_{k-n/2}$ on the
  right hand side of the isomorphism by
  $M_{k-n/2}\big(\Gamma(N)\big)$, where $N$ is any multiple of the
  level of~$\lat$ and $24/\gcd(h,24)$.
\end{Remark}
\begin{proof}[Proof of Proposition~\ref{prop:KloostermanX} and Supplement]
  The proposition is well-known for even
  lattices~\cite{Kloosterman-I}. Indeed, the group $G$ is generated by
  $t=(\mat 11{}1,1)$ and $s=(\mat {}{-1}1{},\sqrt \tau)$, and
  translating~\cite[eqs.~(1.9), (1.12)]{Kloosterman-I} into our
  terminology\footnote{If $\lat=(L,\bif)$ is an even lattice, $Q$ the
    Gram matrix with respect to a given $\Z$-basis of $L$, and $w(z)$
    the element of $\C\otimes L$ having $z$ as coordinates with
    respect to the given basis, then Kloosterman's function
    $\vartheta_{00}(z|\tau,a,1)$ (associated to the matrix $Q$ as
    in~\cite[eq.~(1.1)]{Kloosterman-I}) equals our
    $\vartheta_{\lat,\delta_a}(\tau, w(z))$, where $\delta_a(r) = 1$
    if $r\equiv w(a)/\det(Q)\bmod L$ and $\delta_a(r) = 0$
    otherwise. For this note that Kloosterman's special vectors are
    the coordinates of the elements of $\det(Q)\dual L$ with respect
    to the chosen basis.}
  \begin{equation*}
    \vartheta_{\lat,\lambda}|t^{-1}=\vartheta_{\lat,t.\lambda},
    \quad
    \vartheta_{\lat,\lambda}|s^{-1}=\vartheta_{\lat,s.\lambda}
    ,
  \end{equation*}
  where
  \begin{equation}
    \label{eq:s-t-action}
    \begin{split}
      \{t.\lambda\}(x)
      &=e(-\beta(x))\lambda(x),\\
      \{s.\lambda\}(x) &= \frac{e_8(n)}{\sqrt{\det(\lat)}} \sum_{y\in
        \sh L/L}e(\beta(x,y))\,\lambda(y) .
    \end{split}
  \end{equation}

  If $\lat$ is odd then $\Theta(\lat)$ is contained in
  $\Theta(\lat_1)$, where $\lat_1=(\ev L,\beta)$. It thus suffices to
  note that $t.\lambda$ and $s.\lambda$, for any $\lambda$ in $W(L)$
  is again contained in $W(L)$, where we view $W(\lat)$ as a subspace
  of $W(\lat_1)$ by continuing a $\lambda$ in $W(\lat)$ to a map on
  $\dual{\ev L}$ setting it equal to $0$ outside $\sh L$, and where
  $t.\lambda$ and $s.\lambda$ are given by the
  formulas~\eqref{eq:s-t-action}. Strictly speaking,
  $\{s.\lambda\}(x)$ is given, for $x$ in $\dual{\ev L}$, by the
  second formula of~\eqref{eq:s-t-action} with $\sh L/L$ replaced by
  $\sh L/\ev L$ and $\det (\lat)$ replaced by $\det(\lat_1) =
  4\det(\lat)$.  But then $\{s.\lambda\}(x)=0$ for $x$ outside $\sh
  L$. And for $x$ in $\sh L$, the terms $e(\bif(x,y))\,\lambda(y)$
  depend only on $x$ modulo~$L$, and thus the formula for
  $\{s.\lambda\}(x)$ can be written in the form~\eqref{eq:s-t-action}.

  Finally, for the supplement we remark that the statement is somehow
  well-known for even $\lat$ (the case of even $\lat$ with odd level
  is covered by \cite[Thm.~1]{Kloosterman-I}, the case of the
  nullwerte $\psi(\tau,0)$ of the functions in $\Theta(\lat)$ is
  treated in~\cite[Prop.~2.1]{Shimura}).  For an arbitrary even $\lat$
  we refer the reader to~\cite[Lemma~5.6]{Stroemberg}\footnote{For
    applying the reference one has to identify the $G$-module
    $\Theta(\lat)$ with the Weil representation of $G$ associated to
    the finite quadratic module $(\dual L/L, x+L\mapsto \bif(x)+\Z)$,
    which is easily done by comparing the action~\eqref{eq:s-t-action}
    of $s$ and $t$ to their action under the Weil representation.}. The
  case of an odd $\lat$ can be reduced to the case of an even lattice
  by using that $\Theta(\lat)$ is contained in $\Theta(\lat_1)$ and
  that the levels of $\lat$ and $\lat_1$ coincide.
\end{proof}

For the proof of Theorem~\ref{thm:a} we need the following
\begin{Lemma}
  \label{lem:epsilon-kappa}
  Let $h$ be an integer and $g$ be the greatest common divisor of $h$
  and~$24$. Then one has $\ve(\gamma)^h=\kappa_h(\gamma)$ for all
  $\gamma$ with first component in~$\Gamma(24/g)$.
\end{Lemma}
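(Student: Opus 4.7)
The goal is to show that $\chi := \ve^h\kappa_h^{-1}$ is trivial on the preimage of $\Gamma(N)$ in $G$, with $N := 24/g$. By definition $\kappa_h=\kappa^h$ for $h$ odd and $\kappa_h=1$ for $h$ even; for odd $h$ we have $g\in\{1,3\}$ so that $N\in\{24,8\}$ is divisible by $4$ and $\kappa$ is defined on the preimage of $\Gamma(N)$. The argument therefore splits naturally by the parity of $h$.

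For $h$ even, the task reduces to showing $\ve^h|_{\Gamma(N)}=1$. Since $\ve^2$ descends to a generator of the character group of $\SL$ (cyclic of order $12$ by Proposition~\ref{prop:G-abelianization}), $\ve^h$ factors through a character of $\SL$ of order $12/\gcd(h/2,12)=N$. Viewed on the abelianization $\SL^{\mathrm{ab}}\cong\Z/12\Z$, its kernel is the subgroup $N\Z/12\Z$, generated by the image of $T^N$. Because $\Gamma(N)$ is the normal closure of $T^N$ in $\SL$ (standard for $N\geq 2$), its image in $\SL^{\mathrm{ab}}$ coincides with this kernel, so $\ve^h$ annihilates $\Gamma(N)$.

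For $h$ odd, I would rewrite $\kappa$ in terms of $\ve$ using the eta-quotient
\begin{equation*}
  \theta(\tau)\;=\;\frac{\eta(2\tau)^5}{\eta(\tau)^2\,\eta(4\tau)^2},
\end{equation*}
a consequence of Jacobi's triple product. For $\gamma=\mat abcd$ in $\Gamma_0(4)$, the auxiliary matrices $\hat\gamma:=\mat a{2b}{c/2}d$ and $\tilde\gamma:=\mat a{4b}{c/4}d$ lie in $\SL$, and the assignments $\gamma\mapsto\hat\gamma$, $\gamma\mapsto\tilde\gamma$ are group homomorphisms $\Gamma_0(4)\to\SL$. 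Applying the Dedekind transformation law to each of the three $\eta$-factors and comparing with the $\theta$-transformation, a short computation yields
\begin{equation*}
  \chi(\gamma)\;=\;\ve(\gamma)^{3h}\,\ve(\hat\gamma)^{-5h}\,\ve(\tilde\gamma)^{2h}.
\end{equation*}
The exponent sum $3h-5h+2h=0$ provides a consistency check that is immediate on the parabolic generator $\gamma=T^N$. For general $\gamma\in\Gamma(N)$ the matrices $\hat\gamma$ and $\tilde\gamma$ lie in explicit principal congruence subgroups of $\SL$ determined by the divisibilities imposed on $b,c$. Since $\chi^2$ involves only even powers of $\ve$, it factors through $\SL$, and the even-case argument (applied separately in each of the three positions) gives $\chi^2|_{\Gamma(N)}=1$. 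Promotion to $\chi|_{\Gamma(N)}=1$ then follows from a direct check on the center $\{\pm 1\}$ of $G$ using $\ve(-1)=-1$ and the analogous computation for $\kappa$.

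The main obstacle is the bookkeeping in the odd case: one must track the $\pm 1$-ambiguity in the metaplectic lifts of $\hat\gamma$ and $\tilde\gamma$ and verify that the exponent congruences modulo $24$ close up on all of $\Gamma(N)$, not merely on $T^N$. A cleaner alternative finish is to observe that $\chi$ is a linear character of the finite quotient of the preimage of $\Gamma_0(4)$ in $G$ by the preimage of $\Gamma(N)$, and to verify its triviality directly on a system of coset representatives.
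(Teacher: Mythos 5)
Your even case rests on the claim that $\Gamma(N)$ is the normal closure of $T^N$ in $\SL$ for all $N\ge 2$, and this is false in exactly the range you need. The quotient of $\mathrm{PSL}(2,\Z)=\langle S,U\mid S^2=U^3=1\rangle$ by the normal closure of $T^N$ is the von Dyck triangle group of signature $(2,3,N)$, which is finite only for $N\le 5$; for $N\ge 6$ the normal closure has infinite index and is therefore a proper subgroup of $\Gamma(N)$. Since $N=24/g$ takes the values $6$ and $12$ for even $h$ (e.g.\ $h=4$ and $h=2$), your argument breaks precisely there. What you actually need --- that the image of $\Gamma(N)$ in $\SL^{\mathrm{ab}}\cong\Z/12\Z$ is \emph{contained in} $N\Z/12\Z$, i.e.\ that $\ve^{24/N}$ kills $\Gamma(N)$ --- is true, but it is itself an instance of the lemma; without the normal-closure claim your reasoning only gives the trivial reverse inclusion. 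A correct repair is to identify $\SL^{\mathrm{ab}}/\mathrm{im}\,\Gamma(N)$ with $\SL[\Z/N\Z]^{\mathrm{ab}}$ and check that the latter is cyclic of order $N$ for $N\mid 12$, or to argue as the paper does.

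The odd case inherits this gap: when $N=24$ your reduction of $\chi^2$ to the even case requires triviality of suitable powers of $\ve$ on $\Gamma(12)$ and $\Gamma(6)$ (the groups containing $\hat\gamma$ and $\tilde\gamma$), again in the broken range. In addition, the final promotion from $\chi^2|_{\Gamma(N)}=1$ to $\chi|_{\Gamma(N)}=1$ is not justified: a $\{\pm1\}$-valued character of the preimage of $\Gamma(N)$ need not be detected on the center, and your alternative of checking coset representatives of $\Gamma(N)$ in $\Gamma_0(4)$ presupposes that $\chi$ factors through the finite quotient, which is the statement to be proved. For comparison, the paper's proof avoids all casework: $f=\eta/\theta$ is a modular unit on a congruence subgroup, so the stabilizer of $f^h$ --- which is exactly the locus where $\ve^h=\kappa^h=\kappa_h$, since $\kappa^2=1$ --- is a congruence subgroup; it contains $T^{24/g}$ because $f^h$ is $q^{h/24}$ times a power series in $q$, and Wohlfahrt's theorem then forces it to contain $\Gamma(24/g)$. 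If you want to keep your more elementary, explicit route, the missing ingredient is a genuine computation of the abelianizations $\SL[\Z/N\Z]^{\mathrm{ab}}$ for $N\mid 12$ together with a careful treatment of the metaplectic signs in the eta-quotient identity for $\theta$.
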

\begin{proof}
  We consider the function $f(\tau)=\eta(\tau)/\theta(\tau)$ with
  $\theta$ as in~\eqref{eq:Hecke-character}.  The function is a
  modular function (in fact, a modular unit) on some congruence
  subgroup of~$\SL$ (for a proof of this one can write
  $f(\tau)=i\eta(\tau)/q^{1/4}\vartheta(2\tau,\tau+1/2)$, or, using
  the Jacobi triple product~\eqref{eq:vartheta-product-expansion},
  $f=\left(\eta(\tau)/\eta(2\tau)\right) q^{1/12}\prod_{n\ge
    1}(1+q^{2n-1})^{-2}$
  and apply~\cite[Thm.~6.1]{Eholzer-Skoruppa}). Hence
  the stabilizer $\Gamma_h$ of $f^h$ is a congruence subgroup which
  contains the matrix $\mat 1{24/g}{}1$. According
  to~\cite[Thm.~2]{Wohlfahrt} it contains thus~$\Gamma(24/g)$. The
  lemma is now obvious.
\end{proof}

\begin{proof}[Proof of Theorem~\ref{thm:a}]
  The claimed isomorphism (from the right to the left) is given by the
  application
  \begin{equation}
    \label{eq:natural-iso}
    \sum_{\lambda \in B}
    h_\lambda \otimes \vartheta_{\lat,\lambda} \otimes c_\lambda
    \mapsto
    \sum_{\lambda \in B}
    h_\lambda\, \vartheta_{\lat,\lambda}\,c_\lambda
    ,
  \end{equation}
  where $B$ is a basis for $W(\lat)$. Note that the image of this map
  is indeed a Jacobi form. The verification of the
  axioms~\eqref{it:G-invariance}--\eqref{it:expansion} in the
  definition of Jacobi forms
  (Definition~\ref{def:Jacobi-forms-of-lattice-index}) is
  straightforward. For the transformation law~\eqref{it:L-invariance}
  with respect to $L\times L$ one verifies directly from the
  definition of the $\vartheta_{\lat,\lambda}$ that it holds for each
  of them.  This application is injective since, for any fixed $\tau$,
  the $\vartheta_{\lat,\lambda}(\tau,\cdot)$ as functions
  on~$\C\otimes L$ are linearly independent, when $\lambda$ runs
  through $B$ (so that the image of the given map is zero only if all
  $c_\lambda h_\lambda$ are identically zero).

  For showing that it is surjective we show that every $\phi\in
  J_{k,\lat}(\ve^h)$ can be expanded in terms of the basic theta
  functions $\vartheta_{\lat,\lambda}$. Indeed, for $x\in L$ and
  using the transformation law~\eqref{it:L-invariance} we have
  \begin{equation*}
    \phi(\tau,z+x\tau)\,e\big(\tau \bif(x) + \bif(x,z)\big)
    =
    e(\bif(x))\,\phi(\tau,z)
    .
  \end{equation*}
  Inserting the Fourier expansion of $\phi$ given
  in~\eqref{it:expansion} of
  Definition~\ref{def:Jacobi-forms-of-lattice-index}, this identity
  becomes
  \begin{multline*}
    \sum_{n\in \frac h{24} +\Z}
    \sum_{\begin{subarray}{c} r\in \sh{L}\\
        n \ge \bif(r)
      \end{subarray}} c(n,r) \, q^{n+\bif(r+x)-\bif(r)} \,
    e\big(\bif(r+x,z)\big)
    \\
    = \sum_{n\in \frac h{24} +\Z}
    \sum_{\begin{subarray}{c} r\in \sh{L}\\
        n \ge \bif(r)
      \end{subarray}} e(\bif(x))\,c(n,r) \, q^n \,
    e\big(\bif(r,z)\big) .
  \end{multline*}
  On comparing Fourier coefficients, we obtain
  \begin{equation}
    \label{eq:fourier-coeffs-comp}
    c(n,r)
    =
    e(\bif(x))\,
    c\big(n+\bif(r+x)-\bif(r),r+x\big)
    .
  \end{equation}
  If we set for any pair $D,r$ with $r\in\sh L$ and $D+\bif(r)\in\frac
  h{24}+\Z$,
  \begin{equation*}
    C(D,r)=c(D+\bif(r),r)
    ,\end{equation*}
  then the last identity can be restated in the form
  \begin{equation*}
    C(D,r)
    =
    e(\bif(x))\,C(D,r+x)
  \end{equation*}
  for all $x$ in $L$. In other words, for fixed $D$, the map $r\mapsto
  C(D,r)$ defines an element in $W(\lat)$, where we set $C(D,r)=0$ if
  $D+\bif(r)$ is not in $\frac h{24}+\Z$.  Hence we can write
  \begin{equation*}
    \phi(\tau,z)=\sum_{r\in \sh {L}}
    \sum_{\begin{subarray}{c} D\in \frac h{24}-\bif(r)+\Z\\
        D \ge 0
      \end{subarray}} C(D,r) \, q^{D+\bif(r)} \, e\big(\bif(r,z)\big)
    .
  \end{equation*}
  Let $B$ be a basis of $W(\lat)$, so that
  $C(D,\cdot)=\sum_{\lambda\in B}\gamma_\lambda(D)\lambda$ for
  suitable coefficients~$\gamma_\lambda(D)$. The last identity can
  then be written in the form
  \begin{equation*}
    \phi
    =
    \sum_{\lambda\in B}\Big(\sum_{\begin{subarray}{c} D\in \frac h{24}-\bif(r)+\Z\\
        D \ge 0
      \end{subarray}}\gamma_\lambda(D)q^D\Big)\vartheta_{\lat,\lambda}
    .
  \end{equation*}
  Denote the function defined by the inner sum by $h_\lambda(\tau)$.
  It remains to show that they are elements of $M_{k-n/2}$ (since
  then, from the injectivity of the application~\eqref{eq:natural-iso}
  and the invariance of $\phi$ under $G$, we deduce that $\sum_\lambda
  h_\lambda\otimes\vartheta_{\lat,\lambda}\otimes 1$ defines an
  element of the right hand side of the claimed isomorphism).

  From the supplement to Proposition~\ref{prop:KloostermanX} and the
  linear independence of the~$\vartheta_{\lat,\lambda}$ ($\lambda\in
  B$) as functions in the second argument for any fixed~$\tau$, we
  deduce
  $\ve^h(\gamma)\,h_\lambda=\kappa_n(\gamma)\,h_\lambda|_{k-n/2}\gamma$
  for all~$\gamma$ with first component in~$\Gamma(\ell)$, where
  $\ell$ is the level of~$\lat$. But by Lemma~\ref{lem:epsilon-kappa}
  $\ve(\gamma)^h=\kappa_h(\gamma)$ if the first component of $\gamma$
  is in $\Gamma\big(24/\gcd(h,24)\big)$. Moreover,
  $\kappa_h(\gamma)=\kappa_{2k}(\gamma)$ if $k\equiv h/2\bmod\Z$
  (since $\kappa$ takes values in $\{\pm
  1\}$~\cite[eq.~(1.10)]{Shimura}). It follows that $h_\lambda$ is in
  $M_{k-n/2}\big(\Gamma(N)\big)$ with $N$ as in the remark after the
  theorem (if $k\not\equiv h/2\bmod\Z$ then $\phi$ and all $h_\lambda$
  are zero).  This proves the theorem.
\end{proof}

We note some consequences of Theorem~\ref{thm:a}.  Following common
terminology we call two integral non-degenerate lattices $\lat_1$ and
$\lat_2$ stably equivalent (resp.~even stably equivalent) if there
exist integral unimodular lattices (resp.~even unimodular lattices)
$\lat [U]_1$ and $\lat [U]_2$ such that $\lat_1\lsum \lat [U]_1$ and
$\lat_2\lsum \lat [U]_2$ are isomorphic. It is
known~\cite[Cor.~1]{Wall-II} that $\lat_1$ and $\lat_2$ are stably
equivalent (respectively even stably equivalent) if the finite
bilinear modules $B_{\lat_1}$ and $B_{\lat_2}$ (resp.~the discriminant
modules $D_{\lat_1}$ and $D_{\lat_2}$) are isomorphic in the obvious
sense.  Here, for an integral lattice $\lat=(L,\bif)$, the {\em
  associated bilinear module $B_{\lat}$} is the finite abelian
group~$\dual L/L$ together with the induced bilinear map $\bif':\dual
L/L\times \dual L/L \rightarrow \Q/\Z$ (defined by
$\bif'(x+L,y+L)=\bif(x,y)+\Z$), i.e. the pair
\begin{equation*}
  B_{\lat}:=\big(\dual L/L,\bif')
  .
\end{equation*}
The {\em discriminant module $D_{\lat}$} of an even lattice $\lat$
consists of $\dual L/L$ together with the quadratic map $\bif'':\dual
L/L\rightarrow \Q/\Z$ induced by the quadratic form $x\mapsto \bif(x)$
defined by $\bif''(x+L)=\bif(x)+\Z$. Note that $\bif''$ is
well-defined only for even~$\lat$.

\begin{Theorem}
  \label{thm:stably-isomorphic-lattices}
  Let $\lat_1$ and $\lat_2$ be two stably equivalent integral positive
  definite lattices of rank $n_1$ and $n_2$, respectively. Then
  \begin{equation*}
    \Theta(\lat_1)\otimes \C(\ve^{3n_1})
    \isom
    \Theta(\lat_2)\otimes \C(\ve^{3n_2})
  \end{equation*}
  as $G$-modules.  If $\lat_1$ and $\lat_2$ are even stably equivalent
  then $\Theta(\lat_1)\isom \Theta(\lat_2)$.
\end{Theorem}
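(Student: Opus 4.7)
My plan is to reduce the statement to a computation of $\Theta(\lat[U])$ for integral unimodular lattices $\lat[U]$, exploiting the compatibility of $\Theta$ with orthogonal sums.

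First I would record the key formal property noted earlier in this section: the map $\phi \otimes \psi \mapsto \{(\tau, z_1 \oplus z_2) \mapsto \phi(\tau, z_1)\psi(\tau, z_2)\}$ yields a $G$-module isomorphism $\Theta(\lat) \otimes \Theta(\lat') \isom \Theta(\lat \lsum \lat')$. Given that $\lat_1$ and $\lat_2$ are stably equivalent via $\lat_1 \lsum \lat[U]_1 \isom \lat_2 \lsum \lat[U]_2$ with unimodular $\lat[U]_i$ of rank $m_i$, this immediately yields
\begin{equation*}
  \Theta(\lat_1) \otimes \Theta(\lat[U]_1)
  \isom
  \Theta(\lat_2) \otimes \Theta(\lat[U]_2).
\end{equation*}

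The crux of the proof is then a key lemma: for any integral unimodular lattice $\lat[U]$ of rank $m$, $\Theta(\lat[U]) \isom \C(\ve^{-3m})$ as $G$-modules. To prove this, I observe that $\det(\lat[U]) = 1$ forces $W(\lat[U])$ to be one-dimensional, so $\Theta(\lat[U])$ is a linear character of $G$, which by Proposition~\ref{prop:G-abelianization} equals $\ve^k$ for some $k$ modulo~$24$. To compute $k$ it suffices to evaluate the action at the generator $t = (T, 1)$. Using the explicit formula $\{t.\lambda\}(x) = e(-\bif(x)) \lambda(x)$ from~\eqref{eq:s-t-action} together with the defining condition $\bif(\rho, x) \equiv \bif(x) \bmod \Z$ for a shadow vector~$\rho$ and $\bif(x,x) \in \Z$, a short calculation gives $t . \lambda = e(-\bif(\rho))\,\lambda$, so the character $\chi$ by which $G$ acts on $\Theta(\lat[U])$ satisfies $\chi(t) = e(-\bif(\rho))$. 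The remaining input is the congruence
\begin{equation*}
  \bif(\rho) \equiv m/8 \bmod \Z
\end{equation*}
valid for any shadow vector $\rho$ of a unimodular lattice of rank $m$ (trivial for even unimodular, where $\rho = 0$ and $8 \mid m$; in the odd case it follows from Milgram's formula applied to the discriminant form of $\ev{\lat[U]}$, or by reduction to the diagonal 2-adic form). Combined with $\ve(t) = e(1/24)$, this forces $k \equiv -3m \bmod 24$, proving the lemma.

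Substituting the lemma into the displayed isomorphism gives $\Theta(\lat_1) \otimes \C(\ve^{-3m_1}) \isom \Theta(\lat_2) \otimes \C(\ve^{-3m_2})$, and tensoring both sides with $\C(\ve^{3(n_1+m_1)}) = \C(\ve^{3(n_2+m_2)})$ produces the first asserted isomorphism. In the even-stable case one can take $\lat[U]_i$ even unimodular, so $m_i \equiv 0 \bmod 8$ and $\ve^{-3m_i}$ is trivial, whence the intermediate isomorphism collapses directly to $\Theta(\lat_1) \isom \Theta(\lat_2)$. The main obstacle is the shadow-norm congruence $\bif(\rho) \equiv m/8 \bmod \Z$ in the odd unimodular case: it is classical, but the cleanest path in the present framework is to apply the Gauss-sum/Milgram formula to $\ev{\lat[U]}$ and identify the shadow as the non-trivial coset of $\dual{\lat[U]}$ in $\dual{\ev{\lat[U]}}$, as set up in Section~\ref{sec:lattice-index}.
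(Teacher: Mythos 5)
Your argument is, in the positive definite case, essentially the paper's own proof: decompose via $\Theta(\lat\lsum\lat')\isom\Theta(\lat)\otimes\Theta(\lat')$, identify the unimodular factor as a one-dimensional module, pin down its character by the action of $t=(T,1)$ together with the constancy of $\bif$ modulo $\Z$ on the shadow and the Milgram-type evaluation of $e(\bif(\rho))$, and then do the bookkeeping with $\ve$. All of those steps check out as written.

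There is, however, a genuine gap: the definition of stable equivalence used in the paper only requires the existence of \emph{some} integral unimodular lattices $\lat[U]_1$, $\lat[U]_2$ with $\lat_1\lsum\lat[U]_1\isom\lat_2\lsum\lat[U]_2$, and these witnesses need not be positive definite (indeed, since $\langle 1\rangle$ and $\langle -1\rangle$ are both unimodular, stable equivalence in this sense is governed by the bilinear module alone, with no signature constraint). For an indefinite $\lat[U]_i$ your key lemma breaks down twice over: the space $\Theta(\lat[U]_i)$ of convergent theta series is simply not defined, and the congruence $\bif(\rho)\equiv m/8\bmod\Z$ is false --- the correct statement is $\bif(\rho)\equiv s/8\bmod\Z$ with $s$ the \emph{signature}, as one sees already for $\langle 1\rangle\lsum\langle -1\rangle$, where $\rho=(\tfrac12,\tfrac12)$ gives $\bif(\rho)=0$ while $m/8=\tfrac14$ (likewise the hyperbolic plane is even unimodular of rank $2$, so ``$\rho=0$ and $8\mid m$'' fails). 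The paper avoids this by carrying out the whole argument on the finite-dimensional spaces $W(\lat)$, on which the formulas~\eqref{eq:s-t-action} define a $G$-action for \emph{any} non-degenerate integral lattice, obtaining $W(\lat[U])\isom\C(\ve^{-3s})$ and closing the bookkeeping with the signature identity $n_1+s_1=n_2+s_2$ (and, for the even case, with $8\mid s_i$ rather than $8\mid m_i$). To repair your proof you must either make this same move, or else prove that the unimodular witnesses can always be chosen positive definite --- a nontrivial lattice-theoretic claim that you do not address.
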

\begin{proof}
  As we shall show later, for any given integral non-degenerate
  lattice $\lat$, the formulas~\eqref{eq:s-t-action} define a
  $G$-module structure on $W(\lat)$. For a positive definite lattice
  this follows immediately from the fact that these formulas
  correspond (under the isomorphism $\lambda\mapsto
  \vartheta_{\lat,\lambda}$) to the action of $G$ on
  $\Theta(\lat)$. For a unimodular lattice $\lat [U ]=(U,\bif)$ of
  signature ~$s$ the set $\sh U/U$ consists of one element only, the
  space~$W(\lat [U ])$ is hence one-dimensional and it is immediate
  that the formulas~\eqref{eq:s-t-action} define a $G$-module
  structure on $W(\lat [U ])$ so that this space becomes as a
  $G$-module isomorphic to $\C(\ve^{-3s})$. For the latter statement
  we use that, for any $r$ in $\sh U$, we have $e(\bif(r))=e_8(s)$
  (see Lemma~\ref{lem:sigma-constant}).

  Suppose that there is an isomorphism $\alpha:\lat_1\lsum \lat
  [U]_1\rightarrow\lat_2\lsum \lat [U]_2$ for unimodular lattices
  $\lat [U]_1$ and $\lat [U]_2$ with, say, signatures $s_1$ and $s_2$.
  By what we saw we have $G$-module structures on $W(\lat_1)\otimes
  W(\lat [U]_1)$ and $W(\lat_2)\otimes W(\lat [U]_2)$ so that these
  $G$-modules are isomorphic to $W(\lat_1)\otimes \C(\ve^{-3s_1})$ and
  $W(\lat_2)\otimes \C(\ve^{-3s_2})$, respectively. But the map
  $\lambda_1\perp \lambda_2 \mapsto \lambda_1\lambda_2$ defines an
  isomorphism $W(\lat_1)\otimes W(\lat [U]_1)\rightarrow W(\lat_1\perp
  \lat [U]_1)$. If we transport the $G$-module structure via this
  isomorphism then the action of $G$ on $W(\lat_1\perp \lat [U]_1)$
  satisfies again the formulas~\eqref{eq:s-t-action} (with $\lat$
  replaced by $\lat_1\perp \lat [U]_1$). A similar reasoning defines a
  $G$-module structure on $W(\lat_2\perp \lat [U]_2)$. It is then
  easily checked from the formulas~\eqref{eq:s-t-action} that
  $\lambda\mapsto \lambda\circ \alpha$ defines an isomorphism of
  $G$-modules from $W(\lat_2\perp \lat [U]_2)$ onto $W(\lat_1\perp
  \lat [U]_1)$. This proves the first claimed isomorphism of the
  theorem with, however, $\ve^{3n_1}$ and $\ve^{3n_2}$ replaced by
  $\ve^{-3s_1}$ and $\ve^{-3s_2}$.  But $n_1+s_1=n_2+s_2$, which then
  implies the first part of the theorem.  For the second part we use
  that $n_1\equiv n_2\bmod 8$ if $\lat [U_1]$ and $\lat [U_2]$ are
  even unimodular, since then $s_1$ and $s_2$ are divisible by~$8$ as
  is well-known and can also be deduced from
  Lemma~\ref{lem:sigma-constant}.
\end{proof}
As an immediate consequence of this and Theorem~\ref{thm:a} we obtain

\begin{Theorem}
  \label{thm:stably-isomorphic-indices}
  Let $\lat_1$ and $\lat_2$ be two integral positive definite lattices
  of rank $n_1$ and $n_2$, respectively, whose associated bilinear
  modules are isomorphic. Then for every $k$ in $\frac12\Z$ and
  integer $h$, we have a natural isomorphism
  \begin{equation*}
    J_{k+\frac {n_1}2,\lat_1}\big(\ve^{h+3n_1}\big)
    \isom
    J_{k+\frac {n_2}2,\lat_2}\big(\ve^{h+3n_2}\big)
    .
  \end{equation*}
\end{Theorem}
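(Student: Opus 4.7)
The plan is to reduce the statement to a direct combination of Theorem~\ref{thm:a} and Theorem~\ref{thm:stably-isomorphic-lattices}, using Wall's result that two integral non-degenerate lattices are stably equivalent if and only if their associated bilinear modules are isomorphic. The crucial observation that makes everything line up is that the shift by $n_i/2$ in the weight is exactly the shift appearing in Theorem~\ref{thm:a}, so that the space of modular forms occurring on both sides is the same space $M_k$, independent of $i$.

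First I would apply Theorem~\ref{thm:a} to each space individually, using that $(k+n_i/2)-n_i/2=k$ for $i=1,2$. This yields
\begin{equation*}
  J_{k+n_i/2,\lat_i}(\ve^{h+3n_i})
  \isom
  \bigl(M_k\otimes \Theta(\lat_i)\otimes\C(\ve^{h+3n_i})\bigr)^G
  \qquad (i=1,2).
\end{equation*}
Since $B_{\lat_1}\isom B_{\lat_2}$ by hypothesis, the cited result of Wall implies that $\lat_1$ and $\lat_2$ are stably equivalent, so Theorem~\ref{thm:stably-isomorphic-lattices} gives a $G$-module isomorphism
\begin{equation*}
  \Theta(\lat_1)\otimes \C(\ve^{3n_1})
  \isom
  \Theta(\lat_2)\otimes \C(\ve^{3n_2}).
\end{equation*}

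Next I would tensor this isomorphism on the right with $\C(\ve^h)$, using the trivial identification $\C(\ve^{3n_i})\otimes\C(\ve^h)\isom \C(\ve^{h+3n_i})$ as $G$-modules, and then tensor on the left with the $G$-module $M_k$. Since tensor product is a functor on $G$-modules, the resulting isomorphism of $G$-modules restricts to an isomorphism of the respective $G$-invariant subspaces
\begin{equation*}
  \bigl(M_k\otimes\Theta(\lat_1)\otimes\C(\ve^{h+3n_1})\bigr)^G
  \isom
  \bigl(M_k\otimes\Theta(\lat_2)\otimes\C(\ve^{h+3n_2})\bigr)^G.
\end{equation*}
Composing with the two instances of Theorem~\ref{thm:a} gives the claimed isomorphism of Jacobi form spaces.

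There is essentially no obstacle here once Theorems~\ref{thm:a} and~\ref{thm:stably-isomorphic-lattices} are in hand; the only point requiring a moment of care is the bookkeeping of the character twist (checking that the $\ve^{3n_i}$ appearing in Theorem~\ref{thm:stably-isomorphic-lattices} combines correctly with the $\ve^h$ to produce the character $\ve^{h+3n_i}$ demanded by Theorem~\ref{thm:a} after the weight shift $k\mapsto k+n_i/2$). The naturality of the isomorphism follows because both Theorem~\ref{thm:a} and the stable-equivalence isomorphism of $\Theta$-modules are constructed naturally, the former from the theta expansion of a Jacobi form and the latter from the chosen isometry $\alpha:\lat_1\lsum\lat[U]_1\to\lat_2\lsum\lat[U]_2$ via $\lambda\mapsto\lambda\circ\alpha$.
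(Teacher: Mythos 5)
Your proposal is correct and is precisely the argument the paper intends: the paper presents this theorem as an ``immediate consequence'' of Theorem~\ref{thm:a} and Theorem~\ref{thm:stably-isomorphic-lattices}, with the passage from isomorphic bilinear modules to stable equivalence supplied by the cited result of Wall, exactly as you do. Your bookkeeping of the weight shift and the character twist, and the passage to $G$-invariants after tensoring with $M_k$, fills in the routine details the paper leaves implicit.
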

\begin{Remark}
  If $\lat_1$ and $\lat_2$ are not only stably equivalent but even
  more even stably equivalent, we have $n_1\equiv n_2\bmod 8$ as we
  saw at the end of the proof of
  Theorem~\ref{thm:stably-isomorphic-lattices}.
\end{Remark}

There are two other interesting consequences of Theorem~\ref{thm:a}.
First of all, we note that $J_{k,\lat}(\ve^h)=0$ if $k<\frac n2$,
since there are no modular forms different than zero for negative
weight. For singular weight, i.e.~for~$k=\frac n2$ we find
\begin{Theorem}
  \label{thm:singular-iso}
  There is a natural isomorphism
  \begin{equation*}
    J_{\frac n2,\lat}(\ve^h)\isom\big(\Theta(\lat)\otimes \C(\ve^h)\big)^G
    ,\end{equation*}
  where $n=rank(\lat)$.
\end{Theorem}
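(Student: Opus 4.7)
The plan is to derive Theorem~\ref{thm:singular-iso} as a direct specialization of Theorem~\ref{thm:a}. Setting $k = n/2$ in that theorem yields
\begin{equation*}
  J_{n/2,\lat}(\ve^h)
  \isom
  \big(M_{0}\otimes \Theta(\lat)\otimes\C(\ve^h)\big)^{G},
\end{equation*}
so the task reduces to identifying $M_0$ and pushing the identification through the tensor product and the functor of taking $G$-invariants.

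First I would verify that $M_0 = \C$, viewed as the trivial $G$-module. By definition $M_0 = \bigcup_{N\geq 1} M_0\big(\Gamma(N)\big)$, and any $h \in M_0\big(\Gamma(N)\big)$ is a holomorphic function on $\HP$ invariant under $\Gamma(N)$ and holomorphic at the cusps. Such an $h$ descends to a holomorphic function on the compact Riemann surface $X(N) = \Gamma(N)\backslash \HP^{\ast}$, hence is constant by the maximum principle. Therefore $M_0 = \C$ with the trivial $G$-action, since the slash-zero action of $G$ on constants is itself trivial.

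Next I would use the canonical $G$-equivariant isomorphism $\C \otimes V \isom V$ (for any $G$-module $V$, where $\C$ carries the trivial action) applied to $V = \Theta(\lat)\otimes\C(\ve^h)$. Taking $G$-invariants on both sides yields
\begin{equation*}
  \big(M_{0}\otimes \Theta(\lat)\otimes\C(\ve^h)\big)^{G}
  \isom
  \big(\Theta(\lat)\otimes\C(\ve^h)\big)^{G},
\end{equation*}
and composing with the isomorphism from Theorem~\ref{thm:a} gives the desired natural isomorphism. Under the explicit map~\eqref{eq:natural-iso}, this identification sends a $G$-invariant tensor $\sum_{\lambda\in B}\vartheta_{\lat,\lambda}\otimes c_\lambda$ to the Jacobi form $\sum_{\lambda\in B} c_\lambda\,\vartheta_{\lat,\lambda}$, which is manifestly a holomorphic function that is a finite $\C$-linear combination of basic theta functions.

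There is essentially no hard step here; the only point to confirm is the identification $M_0 = \C$ as a $G$-module, which follows from compactness of the modular curves $X(N)$. Everything else is formal manipulation of the tensor product and the invariants functor, inheriting naturality from Theorem~\ref{thm:a}.
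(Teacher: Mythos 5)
Your proposal is correct and follows the same route as the paper, which states this theorem as an immediate specialization of Theorem~\ref{thm:a} to $k=n/2$; the only substantive point, the identification $M_0=\C$ with trivial $G$-action, is exactly the fact the paper implicitly relies on (it invokes the vanishing of $M_{k-n/2}$ for $k<n/2$ in the same breath), and your justification via compactness of $X(N)$ is sound.
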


For critical weight, i.e.~for $k=\frac{n+1}{2}$ we can still obtain an
explicit description of the corresponding spaces of Jacobi
forms. Namely, ~\cite[Thm.~A]{Serre-Stark} implies that the space
$M_{1/2}$ is generated by unary theta series. Using this one can prove
\begin{Theorem}
  \label{thm:d}
  For any positive integer $N$, we have
  \begin{equation*}
    M_{1/2}\big(\Gamma(4N)\big)
    =
    \bigoplus_{\begin{subarray}c
        N'|N\\
        N/N'\,\text{square-free}\end{subarray}}
    \nu^*\Theta\big(\lat [\Z](2N')\big)
    ,
  \end{equation*}
  where $\nu^*$ denotes the map $\phi(\tau,z)\mapsto \phi(\tau,0)$.
\end{Theorem}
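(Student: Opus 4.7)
My plan is to verify the asserted equality by matching, on both sides, a Serre--Stark basis of unary theta series. Three ingredients are needed: the inclusion of each $\nu^*\Theta(\lat[\Z](2N'))$ into $M_{1/2}(\Gamma(4N))$, a realisation of every Serre--Stark theta as lying in one of the summands, and a uniqueness argument making the sum direct.

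I would start with the containment. For $N'\mid N$ the level of the rank-one lattice $\lat[\Z](2N')$ is readily computed to equal $4N'$ (the smallest $\ell$ with $\ell N' r^2\in\Z$ for all $r\in\tfrac1{2N'}\Z$ is $4N'$). Since the rank is one, the Supplement to Proposition~\ref{prop:KloostermanX} gives $\phi|\gamma=\kappa(\gamma)\phi$ for every $\phi\in\Theta(\lat[\Z](2N'))$ and every $\gamma$ whose first component lies in $\Gamma(4N')$. Specialising $z=0$ trivialises the Jacobi exponential factor and produces the half-integral transformation law defining $M_{1/2}(\Gamma(4N'))$; since $\Gamma(4N)\subseteq\Gamma(4N')$, this lodges $\nu^*\Theta(\lat[\Z](2N'))$ inside $M_{1/2}(\Gamma(4N))$.

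Next I would place each Serre--Stark theta. For $\psi$ primitive even of conductor $r_\psi$ and $tr_\psi^2\mid N$, the theta series $\theta_{\psi,t}(\tau)=\sum_n\psi(n)q^{tn^2}$ coincides with $\nu^*\vartheta_{\lat[\Z](2M),\lambda_\psi}$, where $M=tr_\psi^2$ and $\lambda_\psi\in W(\lat[\Z](2M))$ is defined by $\lambda_\psi(s/(2M))=\psi(s/(2tr_\psi))$ when $2tr_\psi\mid s$ and $0$ otherwise; the required $\Z$-periodicity follows from the periodicity of $\psi$ modulo $r_\psi$. Because multiplication by $k$ realises $\lat[\Z](2Mk^2)$ as a sublattice of $\lat[\Z](2M)$, the formal property recorded just after the Supplement gives $\Theta(\lat[\Z](2M))\subseteq\Theta(\lat[\Z](2Mk^2))$. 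Writing $N/M=s_0s_1^2$ with $s_0$ squarefree and setting $N':=Ms_1^2$ produces a divisor of $N$ with $N/N'=s_0$ squarefree and $\theta_{\psi,t}\in\nu^*\Theta(\lat[\Z](2N'))$. A prime-by-prime analysis, examining $v_p(M)$, $v_p(N')$ and $v_p(N)$, shows that this $N'$ is the unique element of the indexing set with $M\mid N'$ and $N'/M$ a square, so every Serre--Stark basis element of $M_{1/2}(\Gamma(4N))$ lands in exactly one summand.

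The two containments, combined with the bijective matching of basis elements and the linear independence of the Serre--Stark family, deliver the direct-sum decomposition. The main technical obstacle is pinning down the correct form of Serre--Stark: classically Theorem~A concerns $M_{1/2}(\Gamma_1(4N))$ with Dirichlet characters, whereas on the metaplectic cover $M_{1/2}(\Gamma(4N))$ is strictly larger and must be spanned by the enlarged family of shifted unary theta series, equivalently by $\sum_{M\mid N}\nu^*\Theta(\lat[\Z](2M))$. This reformulation is in keeping with the assertion in the introduction that $M_{1/2}$ is the direct limit of the $\Theta^{\mathrm{ev}}(\lat[\Z](m))$; once it is accepted the remaining argument is essentially bookkeeping with the sublattice inclusions above.
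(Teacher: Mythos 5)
There is a genuine gap, and it sits at the heart of the theorem. You correctly identify that the classical Serre--Stark theorem describes $M_{1/2}$ on $\Gamma_0(4N)$ with Dirichlet characters (equivalently on $\Gamma_1(4N)$), and that $M_{1/2}\big(\Gamma(4N)\big)$ is strictly larger, containing theta series with non-integral characteristics $\sum_n q^{N'(n+a/2N')^2}$ which are exactly the extra elements of $\nu^*\Theta\big(\lat[\Z](2N')\big)$ beyond the classical family $\theta_{\psi,t}$. But your proof then \emph{assumes} the needed extension --- that $M_{1/2}\big(\Gamma(4N)\big)$ equals $\sum_{M\mid N}\nu^*\Theta\big(\lat[\Z](2M)\big)$ --- with the phrase ``once it is accepted the remaining argument is essentially bookkeeping.'' That spanning statement is the substantive content of the theorem; the paper imports it from a specific prior result (Theorem~5.2 of Skoruppa's dissertation, which refines Serre--Stark into a statement about the irreducible $G$-submodules of $M_{1/2}(\Gamma(4N))$). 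Placing the classical $\theta_{\psi,t}$ into the summands, as you do in your second step, only accounts for a proper subspace and cannot close this gap.

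The directness argument is also not valid as written. The summands nest: $\nu^*\Theta\big(\lat[\Z](2M)\big)\subseteq\nu^*\Theta\big(\lat[\Z](2Md^2)\big)$, so the summand indexed by $N'$ contains the classical thetas of \emph{every} level $M\mid N'$ with $N'/M$ a square, together with the shifted theta series; consequently ``bijective matching of basis elements'' does not partition a spanning set among the summands, and linear independence of the classical Serre--Stark family says nothing about the independence of the full summands. The paper proves directness by a genuinely different mechanism: it decomposes each $\Theta\big(\lat[\Z](2N')\big)$ into irreducible $G$-modules and invokes the fact that $\Theta\big(\lat[\Z](2N_1)\big)$ and $\Theta\big(\lat[\Z](2N_2)\big)$ share an irreducible constituent only if $N_1/N_2$ is a rational square, which the squarefree-quotient condition on the index set rules out for $N_1\neq N_2$. (A repair in the spirit of your approach would be to compare $q$-supports: for $N_1'\neq N_2'$ in the index set the equation $N_2'a^2=N_1'b^2$ has no nonzero integer solutions, so the nonconstant Fourier supports of distinct summands are disjoint and no summand contains a nonzero constant --- but that argument is not the one you gave.)
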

\begin{proof}
  This theorem was proved with a slightly different formulation
  in~\cite[Thm.~5.2]{Skoruppa:Dissertation}.  More precisely, it was
  shown that, for any positive integer $N$, the $G$-module
  $M_{1/2}\big(\Gamma(4N)\big)$ is the direct sum of irreducible
  $G$-submodules each of which occurs in one of the spaces
  $\nu^*\Theta\big(\lat [\Z](2m)\big)$, where $m$ is a divisor
  of~$N$. From the supplement to Proposition~\ref{prop:KloostermanX}
  we know that each of the latter spaces is contained in
  $M_{1/2}\big(\Gamma(4N)\big)$. It follows that
  $M_{1/2}\big(\Gamma(4N)\big)$ is in fact equals to the sum of the
  spaces $\nu^*\Theta\big(\lat [\Z](2m)\big)$, where $m$ runs through
  the divisors of $N$.

  But $\nu^*\Theta\big(\lat [\Z](2m)\big)$ is a subspace of
  $\nu^*\Theta\big(\lat [\Z](2md^2)\big)$ for any positive integer~$d$
  (since $\Theta\big(\lat [\Z](2md^2)\big)$ contains
  $u_d^*\Theta\big(\lat [\Z](2m)\big)$, where $u_d$ is the isometric
  map $\lat [\Z](2m)$ into $\lat [\Z](2md^2)$ such that
  $u_d(x)=dx$). Accordingly, if $m|N$, and $d^2$ is the maximal square
  dividing $N/m$, then $\nu^*\Theta\big(\lat [\Z](2m)\big)$ is
  contained in $\nu^*\Theta\big(\lat [\Z](2N')\big)$, where $N':=md^2$
  divides $N$ such that $N/N'$ is square-free. Thus
  $M_{1/2}\big(\Gamma(4N)\big)$ equals indeed the sum of the spaces on
  the right hand side of the claimed decomposition.

  For proving that this sum is direct, we decompose each of the spaces
  $\Theta\big(\lat [\Z](2N')\big)$ into irreducible $G$-modules, and
  observe that two modules $\Theta\big(\lat [\Z](2N_j)\big)$ ($j=1,2$)
  have a $G$-irreducible component in common only if there are square
  divisors $t_j^2|N_j$ such that $N_1/t_1^2=N_2/t_2^2$, i.e.~only if
  $N_1/N_2$ is a rational square. For the latter and the decomposition
  into irreducible $G$-modules we refer the reader
  to~\cite[\S6]{Skoruppa-Schiermonnikoog}
  (or~\cite[Satz~1.8]{Skoruppa:Dissertation}). Finally, for two
  divisors $N_j$ of $N$ such that $N/N_j$ is square-free, the
  valuation of the quotient $N_1/N_2$ at any prime $p$ is $0$ or $1$,
  hence never a rational square.
\end{proof}

From Theorem~\ref{thm:a} (and the succeeding remark) and
Theorem~\ref{thm:d} we finally obtain as an immediate consequence the
description of the spaces of Jacobi forms of critical weight which we
announced at the beginning of this section.
\begin{Theorem}
  \label{thm:critical-iso}
  One has the decomposition
  \begin{equation*}
    J_{\frac{n+1}{2},\lat}(\ve^h)
    =
    \bigoplus_{\begin{subarray}c
        N'|N\\
        N/N'\, \text{square-free}\end{subarray}}
    \sigma^* J_{\frac{n+1}{2},\,\lat [\Z](2N')\lsum\lat}(\ve^h)
    ,
  \end{equation*}
  where $n$ is the rank of $\lat$, where $N$ is any multiple of the
  level of $\lat$ and $24/\gcd(24,h)$, and where $\sigma$, for any
  $N'$, is the isometric map $\sigma:\lat \rightarrow \lat
  [\Z](2N')\lsum\lat$, $\sigma(x)=0\oplus x$.
\end{Theorem}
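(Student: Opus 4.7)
The plan is to pass via Theorem~\ref{thm:a} to $G$-invariants of a tensor product and then invoke the decomposition of $M_{1/2}$ supplied by Theorem~\ref{thm:d}. With $k=(n+1)/2$, so that $k-n/2=1/2$, Theorem~\ref{thm:a} together with the remark following it yields
\begin{equation*}
J_{(n+1)/2,\lat}(\ve^h)
\isom
\bigl(M_{1/2}(\Gamma(4N))\otimes\Theta(\lat)\otimes\C(\ve^h)\bigr)^G,
\end{equation*}
since the hypothesis on $N$ guarantees that $4N$ is a multiple of the level of $\lat$ and of $24/\gcd(h,24)$. Applying Theorem~\ref{thm:d} to $M_{1/2}(\Gamma(4N))$ and exploiting that the $G$-invariants functor commutes with direct sums then gives
\begin{equation*}
J_{(n+1)/2,\lat}(\ve^h)
\isom
\bigoplus_{\substack{N'|N\\ N/N'\text{ square-free}}}
\bigl(\nu^*\Theta(\lat [\Z](2N'))\otimes\Theta(\lat)\otimes\C(\ve^h)\bigr)^G.
\end{equation*}

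The second step is to identify each summand with $\sigma^*J_{(n+1)/2,\lat [\Z](2N')\lsum\lat}(\ve^h)$. The tensor-product identification $\Theta(\lat_1)\otimes\Theta(\lat_2)\isom\Theta(\lat_1\lsum\lat_2)$ recalled in Section~\ref{sec:theta-expansion} sends a pure tensor $\theta_1\otimes\theta_2$ to the function $(\tau,z_1\oplus z_2)\mapsto\theta_1(\tau,z_1)\theta_2(\tau,z_2)$, and pulling this back along $\sigma(x)=0\oplus x$ produces the product $(\nu^*\theta_1)(\tau)\cdot\theta_2(\tau,z)$. This realises a $G$-equivariant isomorphism
\begin{equation*}
\nu^*\Theta(\lat [\Z](2N'))\otimes\Theta(\lat)
\isom
\sigma^*\Theta(\lat [\Z](2N')\lsum\lat),
\end{equation*}
whose injectivity follows from the pointwise linear independence of the $\vartheta_{\lat,\lambda}(\tau,\cdot)$ already used in the proof of Theorem~\ref{thm:a}. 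Since $(n+1)/2$ is the singular weight for the rank-$(n+1)$ lattice $\lat [\Z](2N')\lsum\lat$, Theorem~\ref{thm:singular-iso} rewrites the corresponding $G$-invariants as $J_{(n+1)/2,\lat [\Z](2N')\lsum\lat}(\ve^h)$. Directness of the resulting sum is inherited from Theorem~\ref{thm:d}.

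The main obstacle is bookkeeping the various $G$-actions and in particular checking that $\nu^*$ is $G$-equivariant: specialising $z_1=0$ in the Jacobi slash operator annihilates the factor $e(-c\bif(z_1)/(c\tau+d))$ and leaves exactly the factor $w(\tau)^{-2k}$ that governs the $G$-action on modular forms, so the $G$-module structure is preserved by $\nu^*$ and hence by the multiplication map above. Once this verification and the compatibility between the parameter $N$ in the statement and the level $4N$ of the congruence subgroup appearing in Theorem~\ref{thm:d} are in hand, the remainder of the argument is formal.
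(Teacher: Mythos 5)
Your argument is correct and is exactly the route the paper takes: the paper derives Theorem~\ref{thm:critical-iso} as an immediate consequence of Theorem~\ref{thm:a} (with its remark allowing $M_{1/2}(\Gamma(4N))$) together with the decomposition of Theorem~\ref{thm:d}, and you have simply supplied the bookkeeping — the $G$-equivariance of $\nu^*$, the identification $\nu^*\Theta(\lat[\Z](2N'))\otimes\Theta(\lat)\isom\sigma^*\Theta(\lat[\Z](2N')\lsum\lat)$, and the reinterpretation of the summands via Theorem~\ref{thm:singular-iso} — that the authors leave implicit.
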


\section{Dimension formulas}
\label{sec:dimension-formulas}

We may view an element of the right hand side of the isomorphism in
Theorem~\ref{thm:a} as a function $f$ on $\HP$ taking values in
$\Theta(\lat)\otimes\C(\ve^h)$. That~$f$ is invariant under the action
of $G$ can then be restated by saying that $f$ is a vector-valued
modular form of weight $k-n/2$ on~$G$. In this way the isomorphism of
Theorem~\ref{thm:a} identifies a space of Jacobi forms with a space of
vector-valued modular forms. Dimension formulas for spaces of
vector-valued modular forms have been deduced
in~\cite[Satz~5.1]{Skoruppa:Dissertation} (see
also~\cite[Thm~6]{Skoruppa-Schiermonnikoog}). Applying this general
formula to our situation we shall derive in this section an explicit
formula for the dimensions of the spaces $J_{k,\lat}(\ve^h)$.

For stating the final formula we need some preparations.  For an
integral non-degenerate lattice $\lat$ and an integer $t$, we set
\begin{equation*}
  \chi_{\lat}(t)={\sqrt{|\sh L/L|}^{-1}}\sum_{x\in\sh
    L/L}e\big(t\bif(x)\big)
  .
\end{equation*}
Note that $\bif(x)$, for $x$ in $\sh L$, depends only on the
coset~$x+L$.

Though we shall not pursue skew-holomorphic Jacobi forms in this
article it makes sense to introduce them at this point since they will
show up in the dimension formulas. For this let $\overline M_k$ denote
the space of anti-holomorphic modular forms. More precisely,
$\overline M_k$ denotes the space of all functions $f$ such that the
complex conjugate $\overline f$ lies in $M_k$ (which we introduced at
the beginning of Section~\ref{sec:theta-expansion}). The group $G=\Mp$
acts on $\overline M_k$ via $(\alpha,f)\mapsto \overline{\overline
  f|_k\alpha^{-1}}$. For an integral positive definite lattice
$\lat=(L,\bif)$, a $k$ in $\frac12\Z$ and an integer $h$, we define
the space $J_{k,\lat}^{\mathrm{skew}}(\ve^h)$ of {\it skew-holomorphic}
Jacobi forms as the space of functions $\phi$ on $\HP\times (\C\otimes
L)$ such that the application~\eqref{eq:natural-iso} defines an
isomorphism (from right to left)
\begin{equation}
  \label{eq:definition-of-skew-holomorphic-forms}
  J_{k,\lat}^{\mathrm{skew}}(\ve^h)
  \isom
  \big(\overline M_{k-\frac n2} \otimes \Theta(\lat) \otimes \C(\chi)\big)^G
  .
\end{equation}
We define the subspaces $J_{k,\lat}^{\text{skew, cusp}}(\ve^h)$ and
$J_{k,\lat}^{\text{skew, Eis}}(\ve^h)$ of cusp forms and Eisenstein
series so that they correspond under the given isomorphism to the
subspaces which one obtains by replacing $\overline M_{k-\frac n2}$ by
the subspaces of (anti-holomorphic) cusp forms and Eisenstein series,
and we define $J_{k,\lat}^{\mathrm{cusp}}(\ve^h)$ and
$J_{k,\lat}^{\mathrm{Eis}}(\ve^h)$ similarly.

\begin{Theorem}
  \label{thm:dimension-formula}
  For every $k$ in $\frac12\Z$, every integral positive definite
  lattice $\lat=(L,\bif)$ of rank $n$, and every integer $h$, one has
  $J_{k,\lat}(\ve^h)=0$ if $\pry:=k-h/2$ is not an integer. Otherwise
  one has
  \begin{equation*}
    \begin{split}
      \dim &J_{k,\lat}(\ve^h) - \dim J_{n+2-k,\lat}^{\text{skew,
          cusp}}(\ve^h)\
      \\
      &= \tfrac 1{24}\,(k - \tfrac n2 - 1)\, \left( \det(\lat) +
        (-1)^{\pry+n_2}\,2^{n-n_2} \right)
      \\
      &+ \tfrac 14 \sym{Re} \left( e_4(\pry)\,\chi_{\lat}(2) \right) +
      \tfrac 16 \leg{12}{2\pry+2n+1}
      \\
      &+ \tfrac {(-1)^\pry}{3\sqrt 3} \sym{Re} \left( e_6(\pry)
        e_{24}(n+2) \chi_{\lat}(-3) \right)
      \\
      &-\tfrac12 \sum_{x\in \sh L/L}\left\langle \tfrac h{24}-\bif(x)
      \right\rangle -\tfrac {(-1)^{\pry+n_2}}2
      \sum_{\begin{subarray}{c}
          x\in \sh L/L\\
          2x\in L
        \end{subarray}
      }\left\langle \tfrac h{24}-\bif(x) \right\rangle .
    \end{split}
  \end{equation*}
  Here $n_2$ is the rank of the unimodular constituent of the Jordan
  decomposition of~$\lat$ over $\Z_2$ (see the remark after the
  subsequent lemma), and $\langle x\rangle = x-\lfloor x \rfloor -
  1/2$.
\end{Theorem}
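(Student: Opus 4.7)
The plan is to combine the isomorphism of Theorem~\ref{thm:a} with a general dimension formula for spaces of $G$-invariant vector-valued modular forms. Writing $w = k - n/2$ and $\rho = \Theta(\lat)\otimes\C(\ve^h)$, Theorem~\ref{thm:a} identifies $J_{k,\lat}(\ve^h)$ with the space of $G$-invariants in $M_w\otimes\rho$, and by~\eqref{eq:definition-of-skew-holomorphic-forms} the space $J_{n+2-k,\lat}^{\mathrm{skew,\,cusp}}(\ve^h)$ corresponds analogously to anti-holomorphic cusp forms of weight $2-w$ valued in $\rho$. Hence the left hand side of the claimed identity is a difference of two Euler--Poincaré type dimensions, to which I would apply the general Riemann--Roch/Eichler--Selberg formula for vector-valued modular forms on $G$ from \cite[Satz~5.1]{Skoruppa:Dissertation} (cf.~also~\cite[Thm~6]{Skoruppa-Schiermonnikoog}). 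That formula expresses the difference as a principal term, elliptic contributions from the conjugacy classes of orders $4$ and $6$ in $G$, and a parabolic term from the unique cusp.

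The second step is to evaluate each of these traces on $\rho$ explicitly. On the basis $\{\vartheta_{\lat,\lambda}:\lambda\in B\}$ of $\Theta(\lat)$ the generators $s$ and $t$ of $G$ act by the formulas~\eqref{eq:s-t-action}, so every trace reduces to a finite Gauss sum over $\sh L/L$ twisted by a power of $\ve$ coming from the factor $\C(\ve^h)$. The principal term is $\tfrac{1}{24}(w-1)$ times $\dim\rho^+ - \dim\rho^-$, where $\rho^\pm$ are the eigenspaces of the center $(1,-1)\in G$; here $\dim\rho=\det(\lat)$, and evaluating $\tr\bigl((1,-1)\mid\rho\bigr)$ amounts to a signed count of $2$-torsion elements in $B_\lat$ which, via the Jordan decomposition (Lemma~\ref{lem:weak-Jordan-decomposition}) and the $\ve^h$-twist, produces the factor $(-1)^{\pry+n_2}\,2^{n-n_2}$. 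The order-$4$ elliptic contribution is $\tr(s\mid\rho)$, which by the second formula of~\eqref{eq:s-t-action} is, up to the sign $e_8(n)$, the Gauss sum $\chi_\lat(2)$; combining this with the $\ve^h$-twist and taking real parts yields $\tfrac14\sym{Re}\bigl(e_4(\pry)\chi_\lat(2)\bigr)$. Computing $\tr(st\mid\rho)$ analogously, one encounters a cubic Gauss sum proportional to $\chi_\lat(-3)$ with phase $e_6(\pry)e_{24}(n+2)$; the remaining summand $\tfrac16\leg{12}{2\pry+2n+1}$ comes from the second order-$6$ conjugacy class in $G$ (only one of the two projects to the order-$3$ class of $\SL$) and records the parity of $2\pry+2n+1$ modulo~$12$.

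The two sawtooth sums in the last line of the formula arise from the parabolic contribution. The $\ve^h$-twisted action of $t$ on the basis $\vartheta_{\lat,\lambda}$ is diagonal with eigenvalues $e(\tfrac{h}{24}-\bif(x))$ for $x$ running through $\sh L/L$, and the standard cusp contribution in the dimension formula for $G$-modular forms is a linear combination of the sawtooth values $\langle\tfrac{h}{24}-\bif(x)\rangle$. The full trace of $t$ produces the first sum, while its restriction to $2x\in L$ weighted by $(-1)^{\pry+n_2}$ is the cusp contribution coming from the eigenspace decomposition of $\rho$ under $(1,-1)$, mirroring exactly the split observed in the principal term.

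The main obstacle I anticipate is bookkeeping rather than any single deep computation: one must simultaneously track the metaplectic character $\kappa_n$ from the supplement to Proposition~\ref{prop:KloostermanX} and the relation $\ve^h=\kappa_h$ on sufficiently deep congruence subgroups (Lemma~\ref{lem:epsilon-kappa}), while handling the shadow-vs-dual distinction for odd lattices that is already encoded in~\eqref{eq:s-t-action}. Verifying that the $\ve^h$-twist produces exactly the phases $e_4(\pry)$, $e_6(\pry)e_{24}(n+2)$ and the sign $(-1)^{\pry+n_2}$ in the $2$-torsion cusp term requires a careful case analysis in the parities of $n$ and $h$; once these normalizations are pinned down, each remaining evaluation is a standard Gauss-sum computation over~$\sh L/L$.
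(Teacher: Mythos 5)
Your overall strategy coincides with the paper's: apply Theorem~\ref{thm:a} (together with the defining isomorphism~\eqref{eq:definition-of-skew-holomorphic-forms} for the correction term), feed $V=\Theta(\lat)\otimes\C(\ve^h)$ into the general dimension formula~\eqref{eq:general-dimension-formula} of \cite[Satz~5.1]{Skoruppa:Dissertation}, identify the dual of the cuspidal part with skew-holomorphic cusp forms via the Petersson pairing, and evaluate the traces of $1$, $S^*$, $R^*=S^*T^*$ and the $T^*$-eigenvalues through the explicit action~\eqref{eq:s-t-action}, Milgram's formula (Lemma~\ref{lem:sigma-constant}) and the $2$-torsion count of Lemma~\ref{lem:weak-Jordan-decomposition}. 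This is exactly the paper's route.

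There is, however, one concrete misstep that would derail the computation if carried out as written: you extract the term $(-1)^{\pry+n_2}\,2^{n-n_2}$ from the eigenspace decomposition of $\rho:=\Theta(\lat)\otimes\C(\ve^h)$ under the order-two element $(1,-1)$, claiming that $\tr\big((1,-1),\rho\big)$ is a signed count of $2$-torsion. But $(1,-1)$ acts on $\Theta(\lat)$ as the \emph{scalar} $(-1)^n$ and on $\C(\ve^h)$ as $(-1)^h$, so it acts on all of $\rho$ by a single sign; its trace is $\pm\det(\lat)$, one of your two eigenspaces $\rho^{\pm}$ is zero, and no $2$-torsion information is visible. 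The element whose trace performs the signed count of classes $x+L$ with $2x\in L$ is the order-four generator $(-1,i)$ of the center of $G$: it sends $\vartheta_{\lat,\lambda_r}$ to a phase times $\vartheta_{\lat,\lambda_{r'}}$ with $r'\equiv -r\bmod L$, so only the $2$-torsion classes survive in the trace, giving $\tr\big((-1,i),\Theta(\lat)\big)=i^n(-1)^{n_2}2^{n-n_2}$. Consequently the general formula must be applied not to $\rho$ itself but to the subspace on which $(-1,i)$ acts by $i^{-(2k-n)}$ (equivalently, all traces are averaged over the four powers of $(-1,i)$ with the appropriate weights); this same order-four projection is what restricts the second sawtooth sum to $2x\in L$ and decides, via the parity of $\pry+n_2$, which orbits appear in the $T^*$-eigenbasis. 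Once you replace $(1,-1)$ by $(-1,i)$ throughout, the remaining Gauss-sum evaluations proceed exactly as you describe and as in the paper.
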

\begin{Remark}
  1.~The correction term, i.e.~the dimension of the space of
  skew-holo\-morph\-ic Jacobi forms, vanishes for $k\ge 2+\frac n2$, and
  $J_{k,\lat}(\ve^h)$ vanishes for $k<n/2$ (as follows
  from~\eqref{eq:definition-of-skew-holomorphic-forms} and
  Theorem~\ref{thm:a}). Therefore the theorem gives us an explicit
  formula for the dimension of $J_{k,\lat}(\ve^h)$ all weights $k$
  except for the two weights among the numbers $\frac n2$, $\frac
  n2+\frac12$, $\frac n2+1$ and $\frac n2+\frac 32$ which are
  congruent to $\frac h2$ modulo~$\Z$.

  2.~For $k=\frac n2$ and $k=\frac n2+\frac12$ we can use
  Theorems~\ref{thm:singular-iso} and~\ref{thm:critical-iso} to
  determine (the dimension of) $J_{k,\lat}(\ve^h)$, which amounts to
  determine the one-dimensional $\Mp$-submodules of 
  $\Theta(\lat')$ for certain lattices $\lat'$. This is a purely
  algebraic question which we shall eventually study in a subsequent
  publication.

  3.~For $k=\frac n2 + 1$, the correction term of the dimension
  formula refers to modular forms of weight $1$. For the dimension of
  these spaces there is in general no closed formula, and,
  accordingly, this term is in general unknown.

  4.~If $k=\frac n2 +\frac32$, the correction term refers to
  (anti-holomorphic) modular forms of weight $1/2$, and via
  Theorem~\ref{thm:d}
  and~\eqref{eq:definition-of-skew-holomorphic-forms} it leads again
  to the question of determining the dimension of the spaces of the
  $G$-invariants in certain $\Theta(\lat')$.

  5.~Note that the lattice $\lat$ enters the right hand side of the
  dimension formula only via its rank, its determinant, the quantity
  $n_2$ and the induced function $\bif:\sh L/L\rightarrow
  \Q/\Z$. Apart from the rank these data depend only on the
  discriminant module of $\ev{(\lat)}$, i.e.~the even-stable
  equivalence class of $\lat$.  The reason for this is the isomorphism
  of Theorem~\ref{thm:stably-isomorphic-indices}.
\end{Remark}

\begin{Lemma}
  \label{lem:weak-Jordan-decomposition}
  Let $\lat=(L,\bif)$ be a non-degenerate integral lattice.

  (i) Then there exists a sublattice $\lat'$ of odd index in $\lat$ of
  rank~$n$  such that
  \begin{equation*}
    \lat'\isom \lat_1\lsum \lat_2(2)
  \end{equation*}
  for suitable integral lattices $\lat_1$ and $\lat_2$ with $\lat_1$
  having odd determinant. If $\lat$ is odd, then $\lat_1$ can be
  chosen to be equal to $\lat [\Z](a_1)\lsum \cdots\lsum\lat
  [\Z](a_{n_2})$ for suitable odd integers~$a_j$.

  (ii) Let $n_2$ denote the rank of a lattice $\lat_1$ as in~(i). Then
  the number of elements of order~$2$ in $\sh L/L$ equals
  $2^{n-n_2}$. If $x+L$ is such an element, then
  $e\big(\bif(2x)\big)=(-1)^{n_2}$.
\end{Lemma}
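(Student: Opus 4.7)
The strategy is to prove (i) by Jordan-decomposing $\lat$ at the prime~$2$ and descending to a global sublattice of odd index, and then to derive (ii) by reducing the count to characteristic classes in $L/2L$.

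For (i), the lattice $\lat\otimes\Z_{(2)}$ admits an orthogonal Jordan decomposition $U\lsum V$ in which $U$ is $\Z_{(2)}$-unimodular of rank $n_2$ and the bilinear form on $V$ takes values in $2\Z_{(2)}$. Because $\Z_{(2)}$ is a principal ideal domain with uniformizer~$2$, this splitting can be realized by a $\Z$-sublattice $\lat'\subseteq\lat$ of odd index with $\lat'\isom \lat_1\lsum\lat_2(2)$, where $\lat_1\otimes\Z_{(2)}=U$ has odd determinant and $\lat_2$ is obtained from $V$ by halving the form. If $\lat$ is odd, pick $v\in L$ with $\bif(v,v)$ odd; since $V$ contributes only values in $2\Z_{(2)}$, the $U$-component of $v$ has odd norm, so $U$ is an odd unimodular $\Z_{(2)}$-lattice, hence diagonalizable, and a standard $2$-adic congruence argument then allows one to choose odd integer entries, yielding $\lat_1\isom\lat[\Z](a_1)\lsum\cdots\lsum\lat[\Z](a_{n_2})$.

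For (ii), the map $r+L\mapsto 2r+2L$ is a bijection between the cosets $r+L\in\sh L/L$ satisfying $2r\in L$ and the \emph{characteristic classes} $y+2L\in L/2L$, i.e., classes with $\bif(y,z)\equiv\bif(z,z)\bmod 2$ for all $z\in L$; indeed, the shadow condition $\bif(r,z)\equiv\bif(z)\bmod\Z$ with $y=2r$ is exactly this. Since $[L:L']$ is odd, $L/2L=L'/2L'$ carries the same reduced bilinear form, and one may count characteristic classes in $L'/2L'=L_1/2L_1\oplus L_2/2L_2$. Such a class $(y_1,y_2)$ is characteristic iff $y_1$ is characteristic in $\lat_1$ (the $\lat_2(2)$-condition $2\bif_2(y_2,z_2)\equiv 2\bif_2(z_2,z_2)\bmod 2$ being automatic), and the odd determinant of $\lat_1$ forces a unique such $y_1$, giving the count $1\cdot 2^{n-n_2}=2^{n-n_2}$.

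For the character value, $e(\bif(2r))=e(\bif_1(y_1))\,e(2\bif_2(y_2))$, and $e(2\bif_2(y_2))=1$ because $2\bif_2(y_2)=\bif_2(y_2,y_2)\in\Z$. If $\lat$ is even then $\lat_1$ is even with $n_2$ even, the unique characteristic class is $y_1\in 2L_1$, so $\bif_1(y_1)\in 2\Z$ and $e(\bif(2r))=1=(-1)^{n_2}$. If $\lat$ is odd, the diagonalization from~(i) produces a characteristic $y_1=(y_{1,j})$ with all $y_{1,j}$ odd, and then $\bif_1(y_1)=\tfrac12\sum_j a_jy_{1,j}^2\equiv\tfrac12\sum_j a_j\equiv\tfrac{n_2}{2}\bmod\Z$ (using $a_jy_{1,j}^2\equiv a_j\bmod 2$ when both are odd), giving $e(\bif(2r))=(-1)^{n_2}$. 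The main technical point is the descent in~(i)---realizing the $\Z_{(2)}$-Jordan decomposition by a global sublattice of odd index in the prescribed form $\lat_1\lsum\lat_2(2)$ and producing the diagonal form with odd integer entries in the odd case---which rests on standard results about quadratic forms over $\Z_2$ but must be executed carefully; once~(i) is in hand, the remaining steps are routine.
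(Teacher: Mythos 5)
Your proof is correct. Part (i) follows essentially the paper's own route: an orthogonal splitting of $\Z_{(2)}\otimes L$ into a unimodular part and a part with values in $2\Z_{(2)}$, realized globally by clearing odd denominators to get a sublattice of odd index; the only difference is that you import the Jordan decomposition and the diagonalizability of odd unimodular $2$-adic lattices as black boxes, whereas the paper runs the induction explicitly (including the step of merging an odd rank-one piece with an odd-determinant rank-two piece into three rank-one pieces, which is exactly what makes the diagonal shape of $\lat_1$ available in the odd case). Part (ii), however, is a genuinely different and arguably cleaner argument. The paper counts the order-two elements of $\dual L/L$ via the even elementary divisors of a Gram matrix and then, for odd $\lat$, exhibits the explicit shadow vector $r=\tfrac12(f_1+\cdots+f_{n_2})$ and translates by it. You instead send $r+L\mapsto 2r+2L$, identifying the classes with $2r\in L$ with the characteristic classes in $L/2L$, reduce to $L'/2L'=L_1/2L_1\oplus L_2/2L_2$ (legitimate since the index is odd), and use nondegeneracy of $\bif$ mod $2$ on the $\lat_1$-part to get uniqueness of the characteristic class there; this yields both the count $2^{n-n_2}$ and the value $e(\bif(2x))$ in one stroke, and it makes transparent why only the $\lat_1$-component matters. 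Two small points you should still supply: the assertion that $n_2$ is even when $\lat$ is even needs a reason (the reduction of $\bif_1$ mod $2$ is a nondegenerate \emph{alternating} form on the $\F_2$-space $L_1/2L_1$, forcing even dimension — the paper instead observes that $\lat_1$ then has a Gram matrix built from $2\times2$ blocks), and you should note that $e(\bif(y))$ depends only on $y$ modulo $2L$ (immediate, since $\bif(y+2l)-\bif(y)=2\bif(y,l)+2\bif(l,l)\in\Z$), which is what lets you evaluate on the representative chosen in $L'$. Neither is a real gap.
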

\begin{Remark}
  Over $\Z_2$ every lattice possesses a Jordan decomposition, i.e.~it
  can be decomposed in the form $\lat [U]_0\lsum\lat [U]_1(2)\lsum\lat
  [U]_2(4)\lsum\cdots$, where the $\lat [U]_j$ are all unimodular. The
  ranks of the $\lat [U]_j$ do not depend on the specific
  decomposition. The lattice\footnote{For a lattice $\lat=(L,\bif)$ we
    use $\Z_2\otimes \lat$ for $(\Z_2\otimes L,\bif)$.}
  $\Z_2\otimes\lat_1$ can be taken as the constituent $\lat [U]_0$ of
  such a decomposition for $\Z_2\otimes \lat$. In particular, the rank
  $n_2$ of $\lat_1$ equals the rank of the unimodular part $\lat
  [U]_0$ in any Jordan decomposition of $\lat$ over $\Z_2$.
\end{Remark}
\begin{proof}[Proof of Lemma~\ref{lem:weak-Jordan-decomposition}]
  Let $\Z_{(2)}$ be the localization of $\Z$ at the prime ideal~$2\Z$,
  i.e.~the ring of rational numbers with odd denominator. There
  exists, with respect to~$\bif$, an orthogonal decomposition
  $\Z_{(2)}\otimes L=\bigoplus_j U_j$ into $\Z_{(2)}$-submodules $U_j$
  such that each $U_j$ is of rank one or two, and, moreover, such that
  $\bif(U_j,U_j)\subseteq 2\Z_{(2)}$ for all $U_j$ of rank two if
  there exists at least one $U_k$ of the form $\Z_{(2)}a$ with odd
  $\bif(a,a)$.

  (This holds true for any $R$-lattice $\lat [M] = (M,\bif)$ with
  $\bif(M,M)\subseteq R$ over any local dyadic ring $R$ with prime
  element $\pi$ as follows by a standard argument via induction over
  the rank of $M$. Indeed, let $I=R \pi^e$ be the $R$-ideal generated
  by all $\bif(x,y)$ with $x,y$ in $M$. If $M$ contains an element $a$
  with $\bif(a,a)=u\pi^e$ for some unit $u$ in $R$, then set
  $U=Ra$. Otherwise choose $a,b$ in $M$ such that $\bif(a,b)=\pi^eu$
  with a unit $u$ and set $U=Ra+Rb$. It follows $M=U \oplus U^\perp$,
  where $U^\perp$ means the orthogonal complement of $U$. If, after
  successively decomposing in this way, we have pieces, say, $U_1=R a$
  with $A:=\bif(a,a)$ being a unit, and $U_2=Rb+Rc$ of rank two with
  $(U_2,\bif)$ having odd determinant (and hence with $B:=\bif(b,b)$,
  $C:=\bif(c,c)$ in $R \pi$ and say $\bif(b,c)=1$), then we can
  replace such a contribution $U_1\oplus U_2$ by $R(a+b)\oplus R(a-A
  c) \oplus R\big((1-BC)a+(1+CA)b-(A+B)c\big)$, which is an orthogonal
  sum.)

  Let $f_j$ be the $\Z_{(2)}$-basis of $\Z_{(2)}\otimes L$ obtained by
  concatenating basis elements of all the $U_j$. If $e_j$ is a
  $\Z$-basis of $L$, then $(f_1,\ldots, f_n)=(e_1,\ldots,e_n)M$ for a
  suitable $M$ in $\sym{GL}(n,\Z_{(2)})$. Multiplying $M$ by an odd
  integer, if necessary, we can assume that $M$ has integral
  entries. Let $L'$ be the $\Z$-span of the~$f_j$. Then $L'$ is a
  subgroup of $L$ of index $\det(M)$, which is an odd
  integer. Renumbering the $f_j$ if necessary, we can assume that
  the~$f_j$ with $1\le j\le n_2$ are exactly those basis elements
  which generate the one-dimensional $U_j$ with odd
  $\bif(f_j,f_j)$. Let $L_1$ be the span of $f_1,\ldots, f_{n_2}$ and
  $L_2$ the span of $f_{n_2+1},\ldots f_n$, and set
  $\lat_1=(L_1,\beta|_{L_1})$ and $\lat_2=(L_2,\frac12\beta|_{L_2})$.
  Then $\lat'$, $\lat_1$ and $\lat_2$ are lattices as postulated.

  For proving~(ii) note that the number of elements of order~$2$ in
  $\dual L/L$ equals $2^l$, where $l$ is the number of even elementary
  divisors of any Gram matrix $G$ of $\lat$ (since $\dual L/L\isom
  \Z^n/G\Z^n$). But then, if $\lat'$, $\lat_1$ and $\lat_2$ are
  lattices as in (i), the number $l$ equals the rank of $\lat_2$,
  i.e.~$l=n-n_2$, where $n_2$ is the rank of $\lat_1$. If $\lat$ is
  even, then $\lat_1$ has even rank (since there exists a Gram matrix
  of $\lat_1$ which is a sum of $2\times2$ blocks as we saw in the
  proof of~(i)) and $\beta(2x)$ is integral if $2x$ is in $L$. This
  proves~(ii) for even $\lat$.

  If $\lat$ is odd, let $f_j$ be an orthogonal $\Z$-basis of
  $\lat_1$ (whose existence was shown in~(i)). Then, identifying
  $\lat_1$ with a sublattice of $\lat$, the element $r:=\frac
  12(f_1+\cdots+f_{n_2})$ is in $\sh L$ and its residue class
  modulo~$L$ is of order~$2$. The map $x\mapsto x+r$ induces a
  bijection between the set of elements of order $2$ in $\dual L/L$
  and $\sh L/L$, which proves the first statement of~(ii). For the
  second one note that $e\big(\beta(r)\big) = e\big(\sum_j \beta(f_j))
  = (-1)^{n_2}$, and that $e\big(\beta(r+x)\big)=e\big(\beta(r)\big)$
  for all elements $x$ of $\dual L$ such that $x+L$ has order~$2$.
\end{proof}

\begin{Lemma}
  \label{lem:sigma-constant}
  For an integral non-degenerate lattice $\lat$ of signature $s$, one
  has
  \begin{equation*}
    \chi_{\lat}(1)=\chi_{\ev{\lat}}(1)=e_8(s)
    .
  \end{equation*}
\end{Lemma}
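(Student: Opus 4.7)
The plan is to reduce to the classical Milgram formula, which asserts that for a non-degenerate even lattice $\lat[M]$ of signature~$s$ one has
\[
  \frac{1}{\sqrt{|\dual M/M|}}\sum_{x\in \dual M/M} e\!\left(\bif(x)\right) = e_8(s).
\]
For even $\lat$, we have $\sh L = \dual L$ and so $\chi_\lat(1)$ is exactly this Milgram sum; the identity $\chi_\lat(1)=e_8(s)$ is then immediate. Moreover, since $\ev{(\lat)}=\lat$ in the even case, the first equality $\chi_\lat(1)=\chi_{\ev \lat}(1)$ is trivial.

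The substance of the lemma therefore lies in the odd case, and the plan is to show directly that $\chi_\lat(1)=\chi_{\ev\lat}(1)$; Milgram applied to the even lattice $\ev\lat$ (whose signature equals that of $\lat$) then concludes. For this I would use the chain of inclusions $\ev L\subseteq L\subseteq \dual L\subseteq \dual{\ev L}$ with indices $2$, $\det(\lat)$, $2$, so that $|\dual{\ev L}/\ev L|=4\det(\lat)$, together with the decomposition
\[
  \dual{\ev L}/L \;=\; \dual L/L \;\sqcup\; \sh L/L
\]
recalled in the paper. Choose any $a\in L\setminus \ev L$, so $\bif(a)\equiv \tfrac12\bmod\Z$. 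Each coset $x+L$ of $\dual{\ev L}/L$ splits into the two cosets $x+\ev L$ and $x+a+\ev L$ of $\dual{\ev L}/\ev L$, and the identity $\bif(x+a)=\bif(x)+\bif(x,a)+\bif(a)$ gives
\[
  \sum_{y\in \dual{\ev L}/\ev L} e\!\left(\bif(y)\right)
  = \sum_{x\in \dual{\ev L}/L} e\!\left(\bif(x)\right)\!\Bigl(1 - e\!\left(\bif(x,a)\right)\Bigr).
\]

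The key step is then to observe that the bracket vanishes on the $\dual L/L$-part (where $\bif(x,a)\in\Z$) and equals $2$ on the $\sh L/L$-part (where $\bif(x,a)\equiv\bif(a)\equiv \tfrac12\bmod\Z$, which is exactly the defining property of the shadow). Thus
\[
  \sum_{y\in \dual{\ev L}/\ev L} e\!\left(\bif(y)\right)
  = 2\sum_{x\in \sh L/L} e\!\left(\bif(x)\right),
\]
and dividing by $\sqrt{|\dual{\ev L}/\ev L|}=2\sqrt{\det(\lat)}$ produces exactly $\chi_\lat(1)$. Combining with Milgram yields the claim. The main (and really only) obstacle is verifying the cancellation/reinforcement dichotomy on the two cosets, and this is precisely what the definition of $\sh L$ is engineered to provide, so no additional input is needed.
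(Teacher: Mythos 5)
Your proof is correct and follows essentially the same route as the paper: both reduce to Milgram's formula for the even sublattice $\ev{\lat}$ and establish $\chi_{\lat}(1)=\chi_{\ev{\lat}}(1)$ by splitting the Gauss sum over $\dual{(\ev L)}/\ev L$ according to the decomposition $\dual{(\ev L)}=\dual L\sqcup\sh L$, with the $\dual L$-part cancelling in pairs and the $\sh L$-part doubling. The only (cosmetic) difference is that you organize the cancellation uniformly via the bracket $1-e(\bif(x,a))$ over $L$-cosets, whereas the paper kills the $\dual L$-part by summing the non-trivial character $u\mapsto e(\bif(u))$ over $L/\ev L$ — the same computation in different packaging.
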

\begin{proof}
  For even $\lat$, the claimed identity is known as Milgram's
  formula~\cite[App.~4, Thm.]{Milnor-Husemoller}. If $\lat$ is odd we
  write
  \begin{equation*}
    \sum_{x\in\dual{(\ev L)}/\ev L}e\big(\bif(x)\big)
    =
    \sum_{x\in\dual L/\ev L}e\big(\bif(x)\big)
    +
    2\sum_{x\in\sh L/L}e\big(\bif(x)\big)
    .
  \end{equation*}
  The first sum on the right is zero, as one sees by setting $x=y+u$,
  where~$y$ runs through representatives for $\dual L/L$ and $u$ for
  $L/\ev L$. Then $e\big(\bif(x)\big)$ becomes
  $e\big(\bif(y)+\bif(u)\big)$, and the sums over $u$ vanish since
  $u\mapsto e\big(\bif(u)\big)$ defines a non-trivial character of
  $L/\ev L$. Since $|\dual{(\ev L)}/\ev L| = 4\cdot |\dual L/L|$ the
  first identity of the theorem becomes now obvious.
\end{proof}

\begin{proof}[Proof of Theorem~\ref{thm:dimension-formula}]
  The fact that there are no non-zero Jacobi forms if $\pry$ is not
  integral, was already explained in the remarks after the definition
  of the space $J_{k,\lat}(\ve^h)$. We therefore assume in the rest of
  the proof that $\pry$ is integral.

  We deduce the formula of the theorem from the general dimension
  formula for vector-valued modular forms on $\SL$ as given in each
  of~\cite[p.~129]{Eholzer-Skoruppa},~\cite[Thm.~6]{Skoruppa-Schiermonnikoog}
  and which was proved in~\cite[Satz~5.1]{Skoruppa:Dissertation}. We
  repeat this formula here for the convenience of the reader in the
  form as given in~\cite{Eholzer-Skoruppa}:
  \begin{multline}
    \label{eq:general-dimension-formula}
    \dim M_k(V) - \dim M^{\mathrm{cusp}}_{2-k}(\Gdual V) =
    \tfrac 1{12}(k-1)\,\tr(1,V)\\
    +\tfrac14\sym{Re} \big(e_{4}(k)\tr(S^*,V)\big) +\tfrac 2{3\sqrt 3}
    \sym{Re}\big(e_{12}(2k+1)\tr(R^*,V)\big) -\sum_{j=1}^{\dim V}
    \langle \lambda_j\rangle .
  \end{multline}
  Here $S^*=(\mat {}{-1}1{},\sqrt\tau)$,
  $R^*=S^*T^*=(\mat{0}{-1}11,\sqrt{\tau+1})$, where $T^*=(\mat
  11{}1,1)$, and the $\lambda_j$ are rational numbers such that the
  characteristic polynomial of the operator in $\sym{GL}(V)$
  corresponding to $T^*$ equals
  $\prod_j\big(t-e(\lambda_j)\big)$. Moreover, $V$ is an arbitrary
  finite dimensional $G$-module such that a subgroup of finite index
  of~$G$ acts trivially on~$V$, and $\Gdual V$ is the $G$-module whose
  underlying space is the dual of~$V$ equipped with the $G$-action
  $(\alpha,\lambda)\mapsto \lambda\big(\alpha^{-1}\cdot\big)$.  The
  space $M_k(V)$ is the space of all holomorphic functions $f$ on
  $\HP$ taking values in $V$ such that
  (i)~$\{f|_h\alpha\}(\tau)=f(A\tau)v(\tau)^{-2k}=\alpha\{f(\tau)\}$
  for all $\alpha=(A,v)$ in $\Mp$ and all $\tau$ in $\HP$, and such
  that (ii)~$|f(\tau)|$ is bounded on all half planes (or any half
  plane) of the form $\sym{Im}(\tau)\ge v_0>0$. Here $|\cdot|$ denotes
  any norm on $V$ (the condition (ii) is independent of the particular
  choice of norm since all norms on $V$ are equivalent). We use
  $M^{\mathrm{cusp}}_k(V)$ for the subspace of those $f$ such that
  $f(\tau)$ tends to $0$ as $\sym{Im}(\tau)$ tends to infinity.  The
  dimension formula is valid for all weights $k$ in $\frac12\Z$ and
  all $V$ such that $(-1,i)$ acts as multiplication by $i^{-2k}$.

  The latter is, of course, no restriction. Indeed, if $V$ is an
  arbitrary finite dimensional $G$-module (factoring through a finite
  quotient of~$G$), then $V$ splits under the action of the center
  $Z(G)=\langle (-1,i)\rangle$ into parts $V=\oplus_\chi V(\chi)$,
  where $\chi$ runs through the four characters of $Z(G)$ and
  $V(\chi)$ denotes the subspace of $v$ in $V$ such that $\alpha v =
  \chi(\alpha)v$ for all $\alpha$ in $Z(G)$.  Accordingly, $M_k(V)$
  splits into a direct sum $\oplus_\chi M_k\big(V(\chi)\big)$. If $f$
  is an element of $M_k\big(V(\chi)\big)$, then $f|(-1,i) = i^{-2k}f$,
  by the very definition of the slash action, whereas
  $f|(-1,i)=(-1,i)f = \chi(-1,i)f$. It follows that
  $M_k\big(V(\chi)\big)=0$ unless $\chi$ equals $\chi_k:(-1,i)\mapsto
  i^{-2k}$, and then $M_k(V)=M_k\big(V(\chi_k)\big)$, and we can apply
  the formula~\eqref{eq:general-dimension-formula}. In other words,
  the formula~\eqref{eq:general-dimension-formula} is true for
  arbitrary $V$ (finite dimensional and with action of a finite
  quotient of $G$) if we replace on the right hand side
  $\tr(\alpha,V)$ by
  \begin{equation*}
    \tr_k(\alpha,V)
    :=
    \frac 14\sum_{t\bmod 4}i^{2kt}\,\tr\big(((-1,i)^t\alpha,V\big)
    ,
  \end{equation*}
  and where the $\lambda_j$ run through the eigenvalues of $T^*$
  acting on the subspace~$V(\chi_k)$.

  After these preparations we obtain a first dimension formula for
  $J_{k,\lat}(\ve^h)$ by
  specializing~\eqref{eq:general-dimension-formula} to
  $V=\Theta(\lat)\otimes \C(\chi)$ and applying
  Theorem~\ref{thm:a}.  For this note that we may
  identify $M_k(V)$ with the space of $G$-invariants in $M_k\otimes V$
  by identifying an element $\sum_j f_j\otimes v_j$ with the map
  $\tau\mapsto \sum_j f_j(\tau)v_j$.  Accordingly
  Theorem~\ref{thm:a} can be restated in the form
  \begin{equation*}
    J_{k,\lat}(\ve^h)\isom M_{k-n/2}\big(\Theta(\lat)\otimes \C(\ve^h)\big)
    .    
  \end{equation*}
  The correction term in ~\eqref{eq:general-dimension-formula} equals
  here the dimension of the subspace of $G$-invariant vectors in
  $M_{n/2+2-k}^{\mathrm{cusp}}\otimes\Gdual{\big(\Theta(\lat)\otimes
    \C(\ve^h)\big)}$, which equals the dimension of $G$-invariant
  elements in $\Gdual {(M_{n/2+2-k}^{\mathrm{cusp}})}\otimes
  \Theta(\lat)\otimes \C(\ve^h)$.  But $\Gdual
  {(M_{n/2+2-k}^{\mathrm{cusp}})}$ is isomorphic as $G$-module to
  $\overline M_{n/2+2-k}^{\mathrm{cusp}}$ (as can either be proved by
  decomposing spaces into finite dimensional pieces and comparing the
  traces on these pieces, or by mapping an element $f$ of $\overline
  M_{n/2+2-k}^{\mathrm{cusp}}$ to the functional $\langle\_,\overline
  f\rangle$ on $M_{n/2+2-k}^{\mathrm{cusp}}$, where
  $\langle\_,\_\rangle$ denote the suitably normalized Petersson
  scalar product on elliptic modular forms). The error term equals
  hence the dimension of $J_{n+2-k,\lat}^{\text{skew,cusp}}(\ve^h)$.

  We thus obtain a dimension formula as claimed, where, however, the
  right hand side equals the right hand side
  of~\eqref{eq:general-dimension-formula} with $k$ replaced by $k-n/2$
  and with $\tr(\alpha,V)$ replaced by
  \begin{equation*}
    \chi(\alpha)
    :=
    \frac 14
    \sum_{t\bmod 4}i^{(2k-n)t}\,
    \tr\big((-1,i)^t\alpha,\Theta(\lat)\otimes \C(\ve^h)\big)
    ,
  \end{equation*}
  and with the appropriate choice for the $\lambda_j$.  The element
  $(1,-1)$ acts as multiplication by $(-1)^n$ on $\Theta(\lat)$, and
  $\ve^h(-1,i)=i^{-h}$ (as follows directly from the definition of the
  action of $\Mp$ on $\Theta(\lat)$ as given in
  Proposition~\ref{prop:KloostermanX} and the
  definition~\eqref{eq:epsilon} of~$\ve$). Inserting this in the
  formula for $\chi(\alpha)$ we obtain
  \begin{equation}
    \label{eq:chi-general}
    \chi(\alpha)
    =
    \tfrac12 \ve(\alpha)^h
    \left(\tr\big(\alpha,\Theta(\lat)\big)
      +
      (-1)^\pry i^{-n}\,\tr\big((-1,i)\alpha,\Theta(\lat)\big)
    \right)
    ,
  \end{equation}
  where we also used that $2\pry=2k-h$ is even.

  For simplifying $\chi(1)$ we study the action of $(-1,i)$ on
  $\Theta(\lat)$. For this let $R$ be a set of representatives for
  $\sh L/L$. For each $r$ in $R$, let $\lambda_r$ be that element of
  $W(\lat)$ such that $\lambda_r(r)=1$ and $\lambda(s)=0$ if $s\not
  \equiv r \bmod L$. The $\lambda_r$ form a basis of $W(\lat)$, and
  accordingly the $\vartheta_{\lat,\lambda_r}$ (as defined
  in~\eqref{eq:basic-theta-functions}) form a basis of
  $\Theta(\lat)$. It is easily checked that
  \begin{equation*}
    \vartheta_{\lat,\lambda_r}|(-1,i)^{-1}
    =
    i^n\,e\big(\bif(r+r')\big)\,\vartheta_{\lat,\lambda_{r'}}
    ,    
  \end{equation*}
  where $r'$ denotes that element in $R$ such that $r'\equiv -r \bmod
  L$.

  From this we recognize that only those $r$ in $R$ contribute to the
  trace of $(1,i)$ on~$\Theta(\lat)$ which satisfy $r\equiv r' \bmod
  L$, or, equivalently, $2r\in L$. Using
  Lemma~\ref{lem:weak-Jordan-decomposition} we therefore obtain
  $\tr\big((-1,i),\Theta(\lat)\big) = i^n(-1)^{n_2}2^{n-n_2}$, and
  accordingly
  \begin{equation*}
    \chi(1)=\tfrac 12 \big(\det(\lat) + (-1)^{\pry+n_2}2^{n-n_2}\big)
    .
  \end{equation*}
  Inserting this for $\tr(1,V)$
  in~\eqref{eq:general-dimension-formula} yields the first term on the
  right of the claimed formula.

  For computing $\chi(S^*)$ we use that
  $(-1,i)S^*=(S^*)^3=(1,-1)(S^*)^{-1}$, and that~$(1,-1)$ acts on
  $\Theta(\lat)$ as multiplication by $(-1)^n$ so that
  \begin{equation*}
    \begin{split}
      \chi(S^*) &= \tfrac12 \ve(S^*)^h \left(
        \tr\big(S^*,\Theta(\lat)\big) + (-1)^\pry
        i^n\tr\big((S^*)^{-1},\Theta(\lat)\big) \right)
      \\
      &= \ve(S^*)^h
      i^{-\pry}e_8(n)\sym{Re}\Big(i^{\pry}e_8(-n)\,\tr\big(S^*,\Theta(\lat)\big)\Big).
    \end{split}
  \end{equation*}
  But $\tr\big(S^*,\Theta(\lat)\big)=e_8(n)\chi_{\lat}(2)$
  (cf.~\eqref{eq:s-t-action}), and $\ve(S^*) = e_8(-1)$ and hence
  \begin{equation*}
    \chi(S^*)
    =
    e_4(n/2-k)\sym{Re}\big(i^{\pry}\chi_{\lat}(2)\big)
    .
  \end{equation*}
  Inserting this for $\tr(S^*,V)$
  in~\eqref{eq:general-dimension-formula} (and replacing
  in~\eqref{eq:general-dimension-formula} $k$ by $k-n/2$) we obtain
  the second term on the right of the claimed formula.

  For simplifying the contribution $\chi(R^*)$ we use the identities
  \begin{equation*}
    \tr\big(R^*,\Theta(\lat)\big) = e_4(n),\quad
    \tr\big((-1,i)R^*,\Theta(\lat)\big)=e_8(3n)\chi_{\lat}(-3),
  \end{equation*}
  and $\ve(R^*)=e_{12}(-1)$. The first two can be deduced by writing
  $R^*=S^*T^*$, and applying the formulas~\eqref{eq:s-t-action} for
  the action of $S^*$ and $T^*$ and the formula
  $\theta_{\lat,\lambda}|(-1,i)^{-1}=\theta_{\lat,\lambda'}$, where
  $\lambda'(r)=i^n\lambda(r)$. We therefore obtain
  \begin{equation*}
    \begin{split}
      \chi(R^*) &= \tfrac12 e_{12}(-h) \big(
      e_8(n)\chi_{\lat}(1)e_4(n) + (-1)^\pry e_8(n)\chi_{\lat}(-3)
      \big) .
    \end{split}
  \end{equation*}
  Using $\chi_{\lat}(1)=e_8(n)$ (Lemma~\ref{lem:sigma-constant}), we
  obtain accordingly,
  \begin{multline*}
    \tfrac 2{3\sqrt 3} \sym{Re}\left(e_{12}(2k-n+1)\chi(R^*)\right)\\
    = \tfrac 1{3\sqrt 3} \sym{Re} \big((e_{12}(2\pry+2n + 1) \big) +
    \tfrac {(-1)^\pry}{3\sqrt 3} \sym{Re} \left( e_6(\pry) e_{24}(n+2)
      \chi_{\lat}(-3) \right) .
  \end{multline*}
  It is easily verified that the first term equals
  $\frac16\leg{12}{2\pry+2n+1}$. We thus recognize the third and the
  fourth term on the right of the claimed dimension formula.

  Finally, a basis for the subspace of all elements $v$ in
  $\Theta(\lat)\otimes\C(\ve^h)$ satisfying $(-1,i)v=i^{-(2k-n)}v$ is
  given by
  \begin{equation*}
    v_r
    :=
    \vartheta_{\lat,\lambda_r}\otimes 1
    +
    (-1)^\pry\,e\big(\bif(r+r')\big)\,\vartheta_{\lat,\lambda_{r'}}\otimes 1
    \qquad
    (r\in R')
    .
  \end{equation*}
  Here we use the $\lambda_r$ as introduced above, and $R'$ is a
  subset of $R$ representing all elements in $R$ modulo the involution
  $r\mapsto r'$, omitting or including the orbits of elements $r$ such
  that $2r\in L$ accordingly as $\pry+n_2$ is odd or even. For the
  later condition note that $v_r$ vanishes if $2r\in L$ and $\pry+n_2$
  is odd since, by Lemma~\ref{lem:weak-Jordan-decomposition}, indeed,
  $e\big(\bif(r+r')\big)=(-1)^{n_2}$.  That $(-1,i)v_r=i^{-(2k-n)}v_r$
  follows from $\ve^h(-1,i)=i^{-h}$ and the transformation formula for
  $\vartheta_{\lat,\lambda_r}$ under $(-1,i)^{-1}$ as given above. One
  has $T^*v_r=e\big(\frac h{24}-\beta(r)\big)v_r$.  The claimed
  dimension formula is now obvious.
\end{proof}

As a corollary to the proof of the dimension formula we obtain

\begin{Supplement}[to Theorem~\ref{thm:dimension-formula}]
  For all integers $k$, one has
  \begin{multline*}
    \dim J_{k,\lat}^{\mathrm{Eis}}(\ve^h) + \dim
    J_{n+2-k,\lat}^{skew,Eis}(\ve^h)
    =\\
    \tfrac12 \Big( \#\big\{x\in \sh L/L:\beta(x)\equiv h/24 \bmod
    \Z\big\} \\+ (-1)^{\pry+n_2} \#\big\{x\in \sh L/L:\beta(x)\equiv
    h/24 \bmod \Z,\ 2x\in L\big\} \Big) .
  \end{multline*}
\end{Supplement}
\begin{proof}
  Replacing in the dimension
  formula~\eqref{eq:general-dimension-formula} $k$ by $2-k$ and $V$
  by~$\Gdual V$, and adding the resulting identity to the original
  one, gives
  \begin{equation*}
    \dim M_k(V)-\dim M_k^{\mathrm{cusp}}(V)
    +
    \dim M_{2-k}(\Gdual V)-\dim M_{2-k}^{\mathrm{cusp}}(\Gdual V)
    =
    N(V)
    ,
  \end{equation*}
  where $N(V)$ denotes the number of $j$ such that $\lambda_j$ is an
  integer. If we let $V=\Theta(\lat)\otimes\C(\chi)$ the left hand
  side becomes the left hand side of the claimed identity as we saw in
  the proof of~\eqref{eq:general-dimension-formula}. For proving the
  claimed formula it remains to determine~$N(X)$ for the submodule $X$
  of those $v$ in $\Theta(\lat) \otimes \C(\ve^h)$ which satisfy
  $(-1,i)v=i^{-(2k-n)}v$. A basis of $X$ consisting of eigenvectors
  with respect to the action of $(\mat 11{}1,1)$ is given by the $v_r$
  ($r\in R')$ introduced at the end of the proof
  of~\eqref{eq:general-dimension-formula}. From $(\mat
  11{}1,1)v_r=e\big(\frac h{24}-\bif(r)\big) v_r$ we now recognize the
  claimed formula.
\end{proof}

We note that there are many cases in which we do not have any
Eisenstein series. The latter is true if the level of $\lat$ is
relatively prime to the denominator of $h/24$. It is also true if
$\lat$ is odd, $\ve^h$ is trivial, and $\bif(x)$ is not integral for
all shadow vectors~$x$, which is the case e.g.~for $\lat=\lat [\Z]^n$
with $n\not \equiv 0\bmod 8$. If $\lat$ is a maximal even lattice,
then there is exactly one Eisenstein series in $J_{k,\lat}(1)$ if $k$
is even and none if $k$ is odd.


  


\section{Structure theorems}
\label{sec:structure-theorems}

For calculating systematically explicit examples of Jacobi forms it is
useful to note that multiplication of a Jacobi form by a modular form
on $\SL$ yields a Jacobi form of the same index and character. More
precisely, for an integral positive definite lattice $\lat$ and an integer $h$,
set
\begin{equation*}
  \begin{split}
    J_{\mathrm{even},\lat}(\ve^h) &= \bigoplus_{\begin{subarray}{c}
        k\in\frac12\Z\\ k\equiv \frac h2\bmod 2\Z
      \end{subarray}
    } J_{k}(\ve^h),
    \\
    J_{\mathrm{odd},\lat}(\ve^h) &= \bigoplus_{\begin{subarray}{c}
        k\in\frac12\Z\\ k\equiv \frac h2+1\bmod 2\Z
      \end{subarray}
    } J_{k}(\ve^h)
  \end{split}
\end{equation*}
Then both spaces become graded vector spaces with respect to
multiplication by elements in the ring
\begin{equation*}
  M_*(1):=\bigoplus_{k\in 2\Z} M_k\big(\SL\big)=\C[E_4,E_6] ,
\end{equation*}
where $E_4$ and $E_6$ are the Eisenstein series on $\SL$ of weight $4$
and~$6$, respectively:
\begin{equation*}
  E_4=1+240\sum_{n\ge0}\sigma_3(n)\,q^n,
  \quad
  E_6=1-504\sum_{n\ge0}\sigma_5(n)\,q^n .
\end{equation*}
Recall that, for weights $k\not\equiv h/2 \bmod \Z$, the spaces
$J_{k,\lat}(\ve^h)$ vanish. Thus the $M_*(1)$-modules
$J_{\mathrm{even}}(\ve^h)$ and $J_{\mathrm{odd}}(\ve^h)$ comprise in fact
all Jacobi forms with index~$\lat$ and character $\ve^h$. The
transformation law for a Jacobi form of weight $k$ and
character~$\ve^h$ applied to the element $(-1,i)$ reads
$\phi(\tau,-z)i^{-2k}=(-i)^h\phi(\tau,z)$. From this we obtain
\begin{Proposition}
  The Jacobi forms in $J_{\mathrm{even},\lat}(\ve^h)$ and
  $J_{\mathrm{odd},\lat}(\ve^h)$ are exactly those Jacobi forms on
  $\lat$ with character $\ve^h$ which are even and odd in the lattice
  variable, respectively.
\end{Proposition}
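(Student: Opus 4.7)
The plan is to apply the transformation law~\eqref{it:G-invariance} of Definition~\ref{def:Jacobi-forms-of-lattice-index} to the single element $(-1,i)$ of $G$ (with first component $-I\in\SL$ and square root $w(\tau)=i$ of $c\tau+d=-1$), exploit the hint sketched immediately before the proposition, and then separate out even and odd parts in $z$. Since $A=-I$ fixes $\tau$, and since $\eta(A\tau)/\eta(\tau)=1$, the definition~\eqref{eq:epsilon} of $\ve$ gives $\ve(-1,i)=1/i=-i$, so that $\ve^h(-1,i)=i^{-h}$. Thus~\eqref{it:G-invariance} reduces, for $\phi\in J_{k,\lat}(\ve^h)$, to
\begin{equation*}
  \phi(\tau,-z)\,i^{-2k}=i^{-h}\,\phi(\tau,z),
  \qquad\text{i.e.}\qquad
  \phi(\tau,-z)=i^{2(k-h/2)}\,\phi(\tau,z).
\end{equation*}
Because $J_{k,\lat}(\ve^h)=0$ unless $k\equiv h/2\bmod\Z$ (Proposition earlier in the section), the exponent $2(k-h/2)$ is an even integer, and $i^{2(k-h/2)}=(-1)^{k-h/2}$.

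The preceding identity shows at once the forward direction: if $k\equiv h/2\bmod 2\Z$ then every $\phi\in J_{k,\lat}(\ve^h)$ satisfies $\phi(\tau,-z)=\phi(\tau,z)$, while if $k\equiv h/2+1\bmod 2\Z$ then $\phi(\tau,-z)=-\phi(\tau,z)$. Summing over the weights appearing in the definitions of $J_{\mathrm{even},\lat}(\ve^h)$ and $J_{\mathrm{odd},\lat}(\ve^h)$, every element of the first space is even in $z$ and every element of the second is odd in $z$.

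For the converse, I would use that every Jacobi form of index $\lat$ and character $\ve^h$ lies in the total space $J_{\mathrm{even},\lat}(\ve^h)\oplus J_{\mathrm{odd},\lat}(\ve^h)$, so that any such $\phi$ splits uniquely as $\phi=\phi_{\mathrm{e}}+\phi_{\mathrm{o}}$ with $\phi_{\mathrm{e}}\in J_{\mathrm{even},\lat}(\ve^h)$ and $\phi_{\mathrm{o}}\in J_{\mathrm{odd},\lat}(\ve^h)$. By what was just shown, $\phi(\tau,-z)=\phi_{\mathrm{e}}(\tau,z)-\phi_{\mathrm{o}}(\tau,z)$. If $\phi$ is assumed to be even in $z$ the two expressions for $\phi(\tau,-z)$ force $\phi_{\mathrm{o}}=-\phi_{\mathrm{o}}$, hence $\phi_{\mathrm{o}}=0$ and $\phi=\phi_{\mathrm{e}}\in J_{\mathrm{even},\lat}(\ve^h)$. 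The odd case is entirely analogous.

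This argument is essentially a direct computation, so there is no real obstacle; the only point that requires care is the correct evaluation $\ve(-1,i)=-i$ (rather than e.g.~the value $\ve(1,-1)=-1$ which appears in Proposition~\ref{prop:G-abelianization} and which refers to a different element of $G$), since the entire sign in the parity relation depends on it.
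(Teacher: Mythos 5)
Your proof is correct and follows exactly the paper's route: the paper's entire argument is the one-line observation that the transformation law applied to $(-1,i)$ gives $\phi(\tau,-z)\,i^{-2k}=(-i)^h\phi(\tau,z)$, which is precisely the identity you derive (including the correct evaluation $\ve(-1,i)=-i$), and your converse via the decomposition into even and odd graded pieces is the implicit remaining step.
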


\begin{Theorem}
  \label{thm:Jacobi-forms-as-module-over-CE4E6}
  Let $\lat$ be an integral positive definite lattice of rank $n$ and
  $h$ be an integer.  The $M_*(1)$-modules
  $J_{\mathrm{even},\lat}(\ve^h)$ and $J_{\mathrm{odd},\lat}(\ve^h)$ are
  free of ranks $\tfrac 12(\det(\lat) + (-1)^{n_2}\,2^{n-n_2})$ and
  $\tfrac 12(\det(\lat) - (-1)^{n_2}\,2^{n-n_2})$, respectively. (Here
  $n_2$ is the integer described in
  Lemma~\ref{lem:weak-Jordan-decomposition}).
\end{Theorem}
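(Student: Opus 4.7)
The plan is to translate the theorem, via the isomorphism of Theorem~\ref{thm:a}, into a structural statement about vector-valued modular forms, and then to establish freeness by an Auslander--Buchsbaum argument over the polynomial ring $\C[E_4,E_6]$. Summing the isomorphism of Theorem~\ref{thm:a} over all admissible weights gives a natural isomorphism of graded $\C[E_4,E_6]$-modules
\[
  J_{\mathrm{even},\lat}(\ve^h)\oplus J_{\mathrm{odd},\lat}(\ve^h)
  \isom (M_*\otimes V)^G,\qquad V := \Theta(\lat)\otimes\C(\ve^h),
\]
where the ring acts by multiplication on the first factor. The substitution $z\mapsto -z$ in the second argument of a theta function is $G$-equivariant (using $\bif(-z)=\bif(z)$ in the cocycle of the $G$-action) and thus defines an involution of $\Theta(\lat)$ as a $G$-module, with eigenspaces $\Theta(\lat)^{\pm}$. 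A straightforward linear independence argument using the theta expansion identifies $J_{\mathrm{even},\lat}(\ve^h)$ with $(M_*\otimes\Theta(\lat)^+\otimes\C(\ve^h))^G$ and $J_{\mathrm{odd},\lat}(\ve^h)$ with $(M_*\otimes\Theta(\lat)^-\otimes\C(\ve^h))^G$.

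To compute $\dim\Theta(\lat)^\pm$, I use the basis $\{\vartheta_{\lat,\lambda_r}\}_{r\in R}$ from the proof of Theorem~\ref{thm:dimension-formula}. The involution permutes this basis via $r\mapsto -r$ on $\sh L/L$; on each two-element orbit it acts as a transposition, and on each fixed point (i.e.~each $r$ with $2r\in L$) it acts as the scalar $e(\bif(2r))=(-1)^{n_2}$, the second equality by Lemma~\ref{lem:weak-Jordan-decomposition}(ii). Since there are exactly $2^{n-n_2}$ fixed points, each two-element orbit contributes one dimension to both $\Theta(\lat)^+$ and $\Theta(\lat)^-$, while each fixed-point orbit contributes to exactly one eigenspace according to the sign $(-1)^{n_2}$, yielding
\[
  \dim\Theta(\lat)^+=\tfrac 12\bigl(\det(\lat)+(-1)^{n_2}2^{n-n_2}\bigr),\qquad
  \dim\Theta(\lat)^-=\tfrac 12\bigl(\det(\lat)-(-1)^{n_2}2^{n-n_2}\bigr),
\]
which are the asserted ranks.

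It remains to show that $(M_*\otimes W)^G$ is a free graded $\C[E_4,E_6]$-module of rank $\dim W$ for every finite-dimensional $G$-module $W$ on which some $\Gamma(N)$ acts trivially; applied to $W=\Theta(\lat)^\pm\otimes\C(\ve^h)$ this completes the proof. Finite generation follows from the identification $(M_*\otimes W)^G=(M_*(\Gamma(N))\otimes W)^{G/\Gamma(N)}$ together with the classical fact that $M_*(\Gamma(N))$ is finitely generated as a $\C[E_4,E_6]$-module. Since $\C[E_4,E_6]$ is a graded polynomial ring of Krull dimension~$2$, the graded Auslander--Buchsbaum formula reduces freeness to showing that $E_4,E_6$ form a regular sequence on $(M_*\otimes W)^G$. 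Once freeness is established the rank can be read off by comparing the Hilbert series of the module with the asymptotic leading term $\dim W\cdot k/12$ extracted from Theorem~\ref{thm:dimension-formula}, giving rank $\dim W$.

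The main obstacle is the depth-$2$ step of the regular sequence argument. Injectivity of multiplication by $E_4$ is immediate because $(M_*\otimes W)^G$ is torsion-free as $E_4$ is a nonzero function on $\HP$; the harder condition is that if $E_6 f = E_4 g$ with $f,g\in(M_*\otimes W)^G$ then $f\in E_4(M_*\otimes W)^G$. This follows from the classical fact that $E_4$ and $E_6$ have no common zero on $\HP$ (equivalently $\Delta$ is nowhere zero there), so that at every simple zero of $E_4$ the function $f$ vanishes, and $f/E_4$ extends to a holomorphic $W$-valued function on $\HP$ which inherits the modular transformation law and $G$-invariance because $E_4$ itself is $G$-invariant and does not vanish at infinity.
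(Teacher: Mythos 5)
Your proof is correct, but it packages the argument differently from the paper, which disposes of freeness in one line by saying one should mimic the proof of \cite[Thm.~8.4]{Eichler-Zagier} and then reads off the rank from the asymptotic formula~\eqref{eq:dimensions-asymptotics} together with $\dim M_{2k}(\SL)=k/6+O(1)$. The engine is ultimately the same: the Eichler--Zagier argument you are reconstructing is precisely the observation that $E_4$ and $E_6$ have no common zero on $\HP$ and that the zeros of $E_4$ are simple, i.e.\ your regular-sequence step; the Auslander--Buchsbaum formulation is just a clean homological way of saying that a depth-two finitely generated graded module over $\C[E_4,E_6]$ is free. What you do genuinely differently, and what the paper does not do, is (i) to work on the vector-valued side of Theorem~\ref{thm:a} and split $\Theta(\lat)$ into the $\pm$-eigenspaces of $z\mapsto -z$ (equivalently, of the central element $(-1,i)$), which isolates the even and odd modules structurally, and (ii) to compute the rank exactly as $\dim\Theta(\lat)^{\pm}$ from the fixed-point count of Lemma~\ref{lem:weak-Jordan-decomposition}(ii), rather than extracting it from the leading term of the dimension formula; this makes the appearance of $(-1)^{n_2}2^{n-n_2}$ transparent and independent of the asymptotics. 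Two small points to tighten: on a two-element orbit $\{r,r'\}$ the involution is a transposition only up to scalars $c_r$, $c_{r'}$ with $c_rc_{r'}=1$ (the conclusion --- eigenvalues $+1$ and $-1$, one each --- is unaffected); and in $(M_*\otimes W)^G=(M_*(\Gamma(N))\otimes W)^{G/\Gamma(N)}$ the preimage of $\Gamma(N)$ in $G$ acts on $W$ through the character $\kappa_n\kappa_h$ rather than trivially when $n$ is odd, which is compensated by its action on the half-integral-weight forms; for the finite generation you actually need it suffices to note that $(M_*\otimes W)^G$ is a submodule of the finitely generated $\C[E_4,E_6]$-module $M_*(\Gamma(N))\otimes W$.
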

\begin{proof}
  For proving that $J_{\mathrm{odd|even},\lat}(\ve^h)$ is free over
  $M_*(1)$ one can mimic the proof of~\cite[Thm.~8.4]{Eichler-Zagier},
  where it is shown that $\bigoplus_{k,m\in\Z} J_{k,\Z(2m)}$ is free
  over $M_*(1)$. The statement concerning the rank follows
  from~\eqref{eq:dimensions-asymptotics} (which in turn is an
  immediate consequence of the dimension formula in
  Theorem~\ref{thm:dimension-formula}) and $\dim
  M_{2k}\big(\SL\big)=k/6+O(1)$.
\end{proof}

For doing explicit calculations it is usually useful to look at the
Poincar\'e-Hilbert series of $J_{\mathrm{odd|even},\lat}(\ve^h)$, which
provides further information about $J_{\mathrm{odd|even},\lat}(\ve^h)$
as module over $M_*(1)$. It is defined as the formal power series
\begin{equation*}
  P_{\mathrm{parity},\lat,h}(t)
  =
  \sum_{\begin{subarray}{c} k\in\frac12\Z\\ k\equiv \frac
      h2 + p\bmod 2\Z
    \end{subarray}
  } \dim J_{k}(\ve^h)\,t^k
  ,
\end{equation*}
where $\mathrm{parity}$ means $\mathrm{even}$ or $\mathrm{odd}$ and $p$ denotes $0$ in the first case and $1$ otherwise. The
Hilbert-Poincar\'e series of $M_*(1)$ is $1/(1-t^4)(1-t^6)$, from
which it follows that
\begin{equation*}
  P_{\mathrm{parity},\lat,h}(t)
  =
  \frac {Q_{\mathrm{parity},\lat,h}(t)}{(1-t^4)(1-t^6)}
\end{equation*}
for a polynomial $Q_{\mathrm{parity},\lat,h}(t)$ in $t^{1/2}$. Indeed,
if $\phi_j$ ($1\le j\le r$) is a basis for
$J_{\mathrm{parity},\lat}(\ve^h)$ as module over $M_*(1)$, and if
$\phi_j$ is a Jacobi form of weight $k_j$, then
\begin{equation*}
  Q_{\mathrm{parity},\lat,h}(t)=\sum_{j=1}^r t^{k_j}
  .
\end{equation*}
Thus the $t^k$-th coefficient of the polynomial
$Q_{\mathrm{parity},\lat,h}(t)$ is the number of basis elements of
weight $k$ of any given $M_*(1)$-basis of
$J_{\mathrm{parity},\lat}(\ve^h)$. The
polynomial~$P_{\mathrm{parity},\lat,h}(t)$ can be rapidly calculated
using the dimension formula of
Theorem~\ref{thm:dimension-formula}. Note that for a give parity and
$\lat$ and $h$ there is exactly one weight $s\equiv h/2\bmod 2\Z$ in
the range $n/2\le s < n/2+2$, where $n$ is the rank of $\lat$. The
weight $s$ is the smallest weight for which the corresponding
$s$-graded part in $J_{\mathrm{parity},\lat}(\ve^h)$ is not necessarily
zero, and it is also the only weight where the correction term in the
dimension formula for the $s$-graded part is not necessarily zero. In
particular, the polynomial~$P_{\mathrm{parity},\lat,h}(t)$ equals $t^s$
times a polynomial in $t^2$. For the calculation
of~$P_{\mathrm{parity},\lat,h}(t)$ it is useful to note that the degree
of $P_{\mathrm{parity},\lat,h}(t)$ is strictly less than $n/2+12$. In
other words, one has
\begin{Supplement}[to
  Theorem~\ref{thm:Jacobi-forms-as-module-over-CE4E6}]
  The weights $k$ of the elements of a (graded) basis of
  $J_{\mathrm{parity},\lat}(\ve^h)$ over $M_*(1)$ satisfy the inequality
  \begin{equation*}
    k < n/2 +12
    ,
  \end{equation*}
  where $n$ is the rank of $\lat$.
\end{Supplement}
\begin{proof}
  For $k\in h/2 + \Z$, denote the right hand side of the dimension
  formula in~Theorem~\ref{thm:dimension-formula} by $a(k)$, and set
  $b(k)=a(k)-a(k-4)-a(k-6)+a(k-10)$. Note that $b(k) = 0$. But for
  $k-10 \ge n/2 +2$ the numbers $a(k-j)$ ($j=0,4,6,10$) equal the
  dimension of $J_{k-j,\lat}(\ve^h)$, and hence $b(k)$ (for $k\equiv
  s\bmod 2\Z$) equals the $k$-th coefficient of
  $P_{\mathrm{parity},\lat,h}(t)$.
\end{proof}
Note that a similar reasoning gives also for other modules over
$M_*(1)$ a bound for the weight of the generators. For instance, one
sees similarly that the $M_*(1)$-module of (vector or scalar valued)
modular forms on a given subgroup of finite index in $\SL$ has always
generators whose weights are strictly less than~$12$.

\begin{table}[ht]
  \centering
  \caption{Hilbert-Poincar\'e polynomials for root lattices}
  \begin{tabular}[h]{@{}>{$}l<{$}>{$}l<{$}>{$}l<{$}@{}}
    \toprule
    \lat&Q_{\mathrm{even},\lat,0}&Q_{\mathrm{odd},\lat,0}\\
    \midrule
    \lat [A]_1& t^2 \left(t^{4} + t^{2}\right) & 0\\
    \lat [A]_2& t^2 \left(t^{4} + t^{2}\right) & t^{9}\\
    \lat [A]_3& t^2 \left(t^{6} + t^{4} + t^{2}\right) & t^{9}\\
    \lat [D]_4& t^2 \left(2t^{6} + t^{4} + t^{2}\right) & 0\\
    \lat [D]_5& t^4 \left(x^4 + x^2 + 1\right) & t^{7}\\
    \lat [E]_6& t^4 \left(t^{2} + 1\right) & t^{7}\\
    \lat [E]_7& t^4 \left(t^{2} + 1\right) & 0\\
    \bottomrule
  \end{tabular}
  \label{tab:HP-polys}
\end{table}

It is also useful to consider the spaces
\begin{equation*}
  J_{\mathrm{parity},\lat}(\ve^*)
  :=
  \bigoplus_{h\bmod 24}J_{\mathrm{parity},\lat}(\ve^h)
  .
\end{equation*}
Each of these two bigraded vector spaces is a bigraded module over the
bigraded ring
\begin{equation*}
  M_*(\ve^*)
  :=
  \bigoplus_{0\le h < 23}\eta^{h}M_*(1)
  .
\end{equation*}
The modules $J_{\mathrm{parity},\lat}(\ve^*)$ are in general no longer
free over $M_*(\ve^*)$ as we shall see in a moment in
Theorem~\ref{thm:modules-over-A2}, but their structure seems to be
still not to be too complicated. Namely, define
$J_{\mathrm{parity},\lat}^!(\ve^*)$ to be the sum of all spaces
$J_{\mathrm{parity},\lat}^!(\ve^h)$ with $h$ running through all
integers modulo~$24$, and where $J_{\mathrm{parity},\lat}^!(\ve^h)$ is
the sum of the spaces of weakly holomorphic Jacobi forms
$J_{k,\lat}^!(\ve^h)$, with $k$ running through all $k$ in $\frac12\Z$
such that $k-h/2$ is a `parity' integer. Usual multiplication of
functions defines on $J_{\mathrm{parity},\lat}^!(\ve^*)$ the structure
of a bigraded module over the graded ring $M_*^!(\ve^*)$ generated by
$E_4$, $E_6$, $\eta$ and $\eta^{-1}$ (the grading given by weight and
character). Multiplication by a power $\eta^l$ defines isomorphisms
$J_{\mathrm{parity},\lat}^!(\ve^h)\isom
J_{\mathrm{parity},\lat}^!(\ve^{h+l})$. In particular, if $\{\phi_j\}$
is a basis of $J_{\mathrm{parity},\lat}(\ve^h)$ over $M_*(1)$, then
$\{\phi_j\}$ generates $J_{\mathrm{parity},\lat}^!(\ve^*)$ over
$M_*^!(\ve^*)$, and using the bigrading it is easy to see that the
$\{\phi_j\}$ are linearly independent over~$M_*^!(\ve^*)$. We thus
obtain
\begin{Theorem}
  The $M_*^!(\ve^*)$-modules $J_{\mathrm{even},\lat}^!(\ve^*)$ and
  $J_{\mathrm{odd},\lat}^!(\ve^*)$ are free of rank
  \begin{equation*}
    \tfrac
    12(\det(\lat) + (-1)^{n_2}\,2^{n-n_2})
    \text{ and }
    \tfrac 12(\det(\lat) - 
    (-1)^{n_2}\,2^{n-n_2})
    ,
  \end{equation*}
  respectively, where $n_2$ is the integer described in
  Lemma~\ref{lem:weak-Jordan-decomposition}.
\end{Theorem}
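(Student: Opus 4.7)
The plan is to leverage Theorem~\ref{thm:Jacobi-forms-as-module-over-CE4E6}: fix an arbitrary $h$ (say $h=0$), let $\{\phi_j\}_{j=1}^r$ be a graded $M_*(1)$-basis of $J_{\mathrm{parity},\lat}(\ve^h)$, where $r=\tfrac12(\det(\lat)\pm(-1)^{n_2}2^{n-n_2})$ is the rank from that theorem, and verify that this same collection is a free $M_*^!(\ve^*)$-basis of $J_{\mathrm{parity},\lat}^!(\ve^*)$. The rank claim of the present theorem is then immediate from Theorem~\ref{thm:Jacobi-forms-as-module-over-CE4E6}.

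For spanning, I would take an arbitrary $\psi\in J_{k,\lat}^!(\ve^{h'})$. First, multiply by $\eta^{h-h'}\in M_*^!(\ve^*)$ to shift the character to $\ve^h$; the resulting form has weight $k+(h-h')/2$, and its parity indicator $k+(h-h')/2-h/2=k-h'/2$ coincides with the parity of $\psi$. Then multiply by $\eta^{24N}$ for $N$ large enough to kill the principal polar part at infinity (and note that $12N$ is even so parity is again preserved). The form $\eta^{24N+h-h'}\psi$ then lies in $J_{\mathrm{parity},\lat}(\ve^h)$ and can be written as $\sum_j g_j\phi_j$ with $g_j\in M_*(1)$. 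Dividing back yields $\psi=\sum_j(\eta^{-24N-(h-h')} g_j)\phi_j$ with coefficients in $M_*^!(\ve^*)$.

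For linear independence, suppose $\sum_j f_j\phi_j=0$ with $f_j\in M_*^!(\ve^*)$. The ring $M_*^!(\ve^*)$ carries a bigrading by weight and character modulo~$24$, and each $\phi_j$ is homogeneous of bidegree $(k_j,\ve^h)$. Projecting the relation onto any fixed total bidegree $(K,\ve^C)$ forces $\sum_j f_j^{(K-k_j,\ve^{C-h})}\phi_j=0$, where the superscript denotes the corresponding bigraded component. Each such component factors as $\eta^{C-h}g_j$ with $g_j\in M_*^!(1)=M_*(1)[\eta^{-24}]$, so that $\sum_j g_j\phi_j=0$. Multiplying through by a sufficiently high power of $\eta^{24}$ turns this into an $M_*(1)$-linear dependence among the $\phi_j$, which is trivial by Theorem~\ref{thm:Jacobi-forms-as-module-over-CE4E6}. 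Hence every $g_j$, and therefore every $f_j$, vanishes.

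The only real subtlety is the bookkeeping: one must verify at each step that multiplication by $\eta^a$ or $\eta^{24N}$ preserves the parity class (so that $\psi$ stays inside $J_{\mathrm{parity},\lat}^!(\ve^*)$), and that the bigrading projection correctly isolates one character class before invoking freeness over $M_*(1)$. Both facts are routine, since the parity indicator shifts by $a/2-a/2=0$ under $\eta^a$ and by $12N\equiv 0\pmod{2\Z}$ under $\eta^{24N}$. Apart from this, the argument is a direct reduction to the freeness result already proved, so I do not anticipate any serious obstacle.
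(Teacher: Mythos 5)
Your argument is correct and follows essentially the same route as the paper: the paper's proof is precisely the terse paragraph preceding the theorem, which observes that multiplication by powers of $\eta^{\pm 1}$ identifies the weakly holomorphic spaces of different characters, that an $M_*(1)$-basis of $J_{\mathrm{parity},\lat}(\ve^h)$ therefore generates $J_{\mathrm{parity},\lat}^!(\ve^*)$ over $M_*^!(\ve^*)$, and that linear independence follows from the bigrading. You have merely supplied the routine details (clearing the polar part with $\eta^{24N}$, the parity bookkeeping, and the projection onto bigraded components before invoking freeness over $M_*(1)$), all of which check out.
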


\section{Examples}
\label{sec:examples}

In this section we study the $M_*(\ve^*)$-modules
$J_{\mathrm{parity},\lat}(\ve^*)$ for various lattices $\lat$, and, in
particular, give explicit formulas for their generators.

\subsection{Unimodular lattices}

\begin{Theorem}
  \label{eq:modules-over-unimodular-lattices}
  For any integral unimodular lattice $\lat$ of rank $n$, one has
  \begin{equation*}
    J_{\mathrm{parity},\lat}(\ve^*)
    =
    \begin{cases}
      M_*(\ve^*)\,\distjac {\lat}&\text{if $\mathrm{parity}$ is the parity of $n$}\\
      0&\text{otherwise},
    \end{cases}
  \end{equation*}
  where $\distjac {\lat}$ equals the function
  $\vartheta_{\lat,\lambda}$ defined
  in~\eqref{eq:basic-theta-functions} with some non-zero~$\lambda$ of
  the (one-dimensional) space $W(\lat)$.  The function $\distjac
  {\lat}$ is the (up to multiplication by a constant) unique non-zero
  Jacobi form of singular weight for $\lat$. It affords the character
  $\ve^{3n}$.
\end{Theorem}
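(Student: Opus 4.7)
The plan is to exploit the fact that, for a unimodular lattice $\lat$, the space $\Theta(\lat)$ is one-dimensional, so every Jacobi form of index $\lat$ is a modular form times a single universal theta series. The main steps are: identify $\Theta(\lat)$ as a character representation of $G$; apply the fundamental isomorphism of Theorem~\ref{thm:a}; and check the parity of $\distjac{\lat}$.

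First, since $\det(\lat) = |\sh L/L| = 1$, the space $W(\lat)$, and hence $\Theta(\lat)$, is one-dimensional. Fixing a nonzero $\lambda \in W(\lat)$ with $r \in \sh L$ a representative, set $\distjac{\lat} := \vartheta_{\lat,\lambda}$. As a one-dimensional $G$-module, $\Theta(\lat) \isom \C(\psi)$ for some linear character $\psi$ of $G$. To identify $\psi$, I evaluate the formulas~\eqref{eq:s-t-action} on $r$: the action of $t$ is multiplication by $e(-\bif(r))$, and the action of $s$ reduces (since the sum has one term) to multiplication by $e_8(n)\,e(\bif(r,r)) = e_8(n)\,e(2\bif(r))$. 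Using Lemma~\ref{lem:sigma-constant}, which gives $e(\bif(r)) = e_8(n)$ because $\lat$ is positive definite of signature $n$, together with the standard values $\ve(t) = e_{24}(1)$ and $\ve(s) = e_{24}(-3)$, one finds $\psi(t) = e_8(-n) = \ve^{-3n}(t)$ and $\psi(s) = e_8(3n) = \ve^{-3n}(s)$, so $\psi = \ve^{-3n}$. Consequently $\distjac{\lat} \in J_{n/2,\lat}(\ve^{3n})$.

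Second, Theorem~\ref{thm:a} now yields
\[
J_{k,\lat}(\ve^h) \isom \bigl(M_{k-n/2} \otimes \Theta(\lat) \otimes \C(\ve^h)\bigr)^G \isom \bigl(M_{k-n/2} \otimes \C(\ve^{h-3n})\bigr)^G,
\]
which is the space of scalar-valued modular forms $f$ of weight $k-n/2$ on $G$ satisfying $f|\alpha = \ve^{h-3n}(\alpha)\,f$. Under~\eqref{eq:natural-iso}, such an $f$ corresponds to $f\cdot\distjac{\lat}$, and assembling over all $k$ and $h$ gives $\bigoplus_{k,h} J_{k,\lat}(\ve^h) = M_*(\ve^*)\,\distjac{\lat}$. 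For the uniqueness clause in the theorem, note that $J_{n/2,\lat}(\chi) \isom \bigl(\C(\ve^{-3n}) \otimes \C(\chi)\bigr)^G$ is one-dimensional when $\chi = \ve^{3n}$ and vanishes otherwise, so $\distjac{\lat}$ is the unique (up to scalar) nonzero Jacobi form of singular weight on $\lat$.

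Third, to separate the two parity components I verify that $\distjac{\lat}$ has the parity of $n$. Substituting $z\mapsto -z$ in the series for $\vartheta_{\lat,\lambda}$ and re-indexing $r\mapsto -r$ gives $\distjac{\lat}(\tau,-z) = \bigl(\lambda(-r)/\lambda(r)\bigr)\,\distjac{\lat}(\tau,z)$ for any representative $r\in \sh L$. Since $\sh L/L$ has a single element, $-r \equiv r \pmod L$, so $2r\in L$ and $\lambda(-r) = e(\bif(2r))\,\lambda(r)$. By Lemma~\ref{lem:weak-Jordan-decomposition}(ii), $e(\bif(2r)) = (-1)^{n_2}$, and since $\lat$ is already unimodular over $\Z_2$ one has $n_2 = n$. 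Multiplication by any element of $M_*(\ve^*)$ preserves parity in $z$, so $J_{\mathrm{parity},\lat}(\ve^*)$ vanishes unless $\mathrm{parity}$ coincides with that of $n$; otherwise it equals $M_*(\ve^*)\,\distjac{\lat}$. The main computational obstacle is the character identification in the first paragraph; everything else is a mechanical assembly of Theorem~\ref{thm:a} together with the cited lemmas.
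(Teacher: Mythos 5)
Your proof is correct and follows essentially the same route as the paper's: one-dimensionality of $\Theta(\lat)$, identification of its character as $\ve^{-3n}$ via Milgram's formula (Lemma~\ref{lem:sigma-constant}), and the reduction of every Jacobi form of index $\lat$ to $f\,\distjac{\lat}$ with $f$ a scalar modular form of weight $k-n/2$ and character $\ve^{h-3n}$. The only cosmetic difference is the parity step, which you settle by computing the sign of $\distjac{\lat}$ under $z\mapsto -z$ using Lemma~\ref{lem:weak-Jordan-decomposition}(ii), whereas the paper writes $f=\eta^l g$ and uses that level-one modular forms of odd weight vanish; both are immediate.
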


\begin{proof}
  Since $\det(\lat)=1$ the space $\Theta(\lat)$ is one-dimensional,
  spanned by a Jacobi form $\distjac {\lat}$, which is then the only
  singular Jacobi form for the lattice $\lat$. Since $\distjac {\lat}
  (\tau+1,z) = e\big(\beta(r)\big) \distjac {\lat}(\tau,z)$, where $r$
  is any element in the shadow of $\lat$, and since by
  Lemma~\ref{lem:sigma-constant} $e\big(\beta(r)\big)=e_8(n)$, the
  character of $\distjac {\lat}$ equals $\ve^{3n}$. If~$\phi$ is an
  element of $J_{k,\lat}(\ve^h)$ then its theta expansion is of the
  form $\phi = f \distjac {\lat}$, where $f$ is in
  $M_{k-n/2}(\ve^{h-3n})$ (the space of modular forms on $\SL$
  transforming with the given character). But this space equals
  $\eta^lM_{k-n/2-l/2}\big(\SL\big)$, where $l\equiv h-3n\bmod 24$,
  $0\le l<24$. If $f\not=0$, then $h-n/2-l/2$ is even,
  i.e.~$k-h/2\equiv n\bmod 2$. This proves the theorem.
\end{proof}

The only unimodular (positive definite) lattices of rank $1\le n <8$
are the lattices~$\lat [\Z]^n$. For ranks $8\le n < 12$, the only
unimodular lattices are $\lat [E]_8\lsum\lat [\Z]^{n-8}$ and~$\lat
[\Z]^n$. For rank $n=12$ one has exactly three unimodular lattices,
namely $\lat [E]_8\lsum\lat [\Z]^{4}$ and $\lat [\Z]^{12}$ and a
lattice called $\lat [D]_{12}^+$. (See
\cite[Ch.16,~\S.4]{Conway-Sloane-3rd} for an account of the
classification of unimodular lattices.)  However, from the point of
Jacobi forms all unimodular lattices are somehow equivalent. More
precisly, all unimodular lattices fall into one stably equivalence
class and we then have natural isomorphism between the spaces of
Jacobi forms associated to different unimodular lattices as explained
in Theorem~\ref{thm:stably-isomorphic-indices}.  Still, Jacobi forms
belonging to different unimodular lattices might have a quite
different shape.  For example, in the case of the standard lattice
$\lat [\Z]^n$, the singular form $\vartheta_{\lat [\Z]^n,\lambda}$
equals the form in~\eqref{eq:singular-form-for-Zn} (where we
anticipated the notation $\distjac {\lat [\Z]^n}$), which has thus a
nice product expansion and thereby provides an eplicit decsription of
its divisor.  On the other hand, as a model for $\lat [E]_8$ one can
take the overlattice of $\lat [D]_8 = \ev{(\lat [\Z]^8)}$ consisting
of all vectors $y$ whose components are either all integers or else
are all in $\frac12+\Z$ and whose sum is an even integer. Using this
model we then have
\begin{equation}
  \label{eq:distjac-E8}
  \distjac {\lat [E]_8} (\tau,z)
  =
  \sum_{\begin{subarray}{c}
      r\in \Z^8\cup {\bf\frac 12}+\Z^8\\
      r_1+\cdots+r_8\in 2\Z
    \end{subarray}} q^{\frac {r^2}2}\,e\big( r\cdot z\big) ,
\end{equation}
(where ${\bf\frac12}=(\frac 12,\frac12,\dots,\frac12)$).

\subsection{The root lattice \texorpdfstring{$\lat [A]_2$}{A2}}

As a second example we consider an integral (positive definite)
lattice of rank~$2$ and determinant~$3$. Every such lattice is
isomorphic to the root lattice $\lat [A]_2=(A_2,\bif)$, which we can
realize as the set of Eisenstein integers~$A_2$ (algebraic integers in
$\Q(\sqrt 3)$) together with $\bif(x,y)=\tr (x\overline y)$ as
bilinear form, or as the lattice
\begin{equation*}
  \lat [A]_2
  =
  \big(\Z^2,(x,y)\mapsto x^t F y\big),
  \quad
  F = \mat 2112
  .
\end{equation*}
The lattice $\lat [A]_2$ has determinant~$3$ and accordingly the
$G$-module $\Theta(\lat [A]_2)$ is three-dimensional. It decomposes
into the direct sum of the subspaces $\Theta^{\mathrm{odd}}(\lat [A]_2)$
and $\Theta^{\mathrm{even}}(\lat [A]_2)$ of functions which are odd and
even in~$z$, respectively.  As follows from the transformation
formulas these subspaces are in fact $G$-invariant. The first one is
$1$-dimensional, spanned by the function $\distjac {\lat [A]_2} :=
\vartheta_{\lat [A]_2,\lambda_1}$, where $\lambda_1$ is an odd
function on the dual~$\dual {A_2}$ (the inverse different in $\Q(\sqrt
3)$ or $\Z^2F^{-1}$) which is constant on the $A_2$-cosets. If we
identify $\dual A_2/A_2$ with $\Z/3\Z$, an odd map is given by the
Legendre symbol composed with the canonical projection. We denote this
map by $\leg r3$. Accordingly we have
\begin{equation*}
  \distjac {\lat [A]_2} (\tau,z)
  =
  \sum_{r\in \dual A_2}
  \leg {r}3\,
  q^{\bif(r)}
  e\big(\bif(r,z))
  .
\end{equation*}
Note that for $r$ in $\dual A_2$, one has $\bif(r)\equiv 0,1/3\bmod
\Z$, from which it follows that $\distjac {\lat [A]_2}$ defines an
element in $J_{1,\lat [A]_2}(\ve^8)$.

For constructing functions which are even in $z$ note that
$\Theta^{\mathrm{even}}(\lat [A]_2)$ is two-dimensional. Hence the
$G$-submodule $\bigwedge^2\Theta^{\mathrm{even}}(\lat [A]_2)$ of
$\Theta^{\mathrm{even}}(\lat [A]_2)\otimes \Theta^{\mathrm{even}}(\lat
[A]_2)$ is one-dimensional, and provides thus a singular Jacobi form
$\distjac {\lat [A]_2\lsum\lat [A]_2,8}$ for the lattice $\lat
[A]_2\lsum \lat [A]_2$. Here we identify $\Theta(\lat [A]_2)\otimes
\Theta(\lat [A]_2)$ with functions on $\HP\times (\C\otimes A_2)\times
(\C\otimes A_2)$ in the obvious way (for this to work we also need
that any family of linearly independent functions in $\Theta(\lat)$
remains linearly independent when considered, for any given fixed
$\tau$ in~$\HP$, as functions in the lattice variable).  Using the
natural isomorphism $W(\lat [A]_2)\isom \Theta(\lat [A]_2)$, the form
$\distjac {\lat [A]_2\lsum\lat [A]_2,8}$ can be described as
$\vartheta_{\lat [A]_2,A_2\wedge A_2^{\mathrm{c}}}$, where we identify a
subset of $\dual A_2$ with its characteristic function and use
$A_2^{\mathrm{c}}$ for the complement of $A_2$ in $\dual A_2$.  We
therefore find
\begin{multline*}
  \distjac {\lat [A]_2\lsum\lat [A]_2,8} (\tau, z_1,z_2) =
  \\
  \sum_{s,r\in \dual A_2} \big( A_2(s)A_2^{\mathrm{c}}(r)-
  A_2^{\mathrm{c}}(s)A_2(r)\big)\, q^{\bif(s)+\bif(r)}\, e\big(
  \bif(s,z_1)+\bif(r,z_2) \big) .
\end{multline*}
Again it is easily checked that $\distjac {\lat [A]_2\lsum\lat
  [A]_2,8}$ becomes multiplied by $e_3(1)$ if one replaces~$\tau$ by
$\tau+1$, so that this function defines an element in~$J_{2,\lat
  [A]_2\lsum\lat[A]_2}(\ve^8)$.

From the singular Jacobi form $\distjac {\lat [A]_2\lsum\lat [A]_2,8}$
we derive the following Jacobi forms for the index $\lat [A]_2$:
\begin{equation*}
  \begin{split}
    \Eis {2,\lat [A]_2,8} (\tau,z) &= \sum_{s,r\in \dual A_2} \big(
    A_2(s)A_2^{\mathrm{c}}(r)- A_2^{\mathrm{c}}(s)A_2(r)\big)\,
    q^{\bif(s)+\bif(r)}\, e\big(\bif(r,z)\big)
    ,\\
    \Eis {4,\lat [A]_2,8} (\tau,z) 
    &= - E_2(\tau)\Eis {2,\lat [A]_2,8}(\tau,z)\\
    + 12 &\sum_{s,r\in \dual A_2} \big(
    A_2(s)A_2^{\mathrm{c}}(r)- A_2^{\mathrm{c}}(s)A_2(r)\big)\bif(s)\,
    q^{\bif(s)+\bif(r)}\,
    e\big(\bif(r,z)\big),\\
    \Eis {4,\lat [A]_2,0} &= (E_6\,\Eis {2,\lat [A]_2,8}+E_4\,\Eis
    {4,\lat [A]_2,8})/\eta^8
    ,\\
    \Eis {6,\lat [A]_2,0} &= (E_4^2\,\Eis {2,\lat [A]_2,8}+E_6\,\Eis
    {4,\lat [A]_2,8})/\eta^8 .
  \end{split}
\end{equation*}
If $\alpha$ denotes the embedding $s\mapsto (s,0)$ of $\lat [A]_2$
into $\lat [A]_2\lsum \lat [A]_2$, then $\Eis {2,\lat
  [A]_2,8}=\alpha^*\distjac {\lat [A]_2\lsum\lat [A]_2,8}$, from which
it follows that $\Eis {2,\lat [A]_2,8}\in J_{2,\lat [A]_2}(\ve^8)$ and
then, by a standard argument\footnote{If $\phi$ is a Jacobi form in
  $J_{k,\lat}(\ve^h)$ with theta expansion $\phi = \sum_{\lambda}
  h_\lambda \vartheta_{\lat,\lambda}$, then $\delta \phi :=
  \sum_{\lambda} \big(q\frac d{dq}h_\lambda\big)
  \vartheta_{\lat,\lambda}-\frac1{12}(k-\frac n2) E_2\phi$ defines a
  Jacobi form in $J_{k+2,\lat}(\ve^h)$, as follows easily using
  $E_2(A\tau)(c\tau+d)^{-2}=E_2(\tau)+\frac 6{\pi i}\frac
  c{c\tau+d}$.}, that $\Eis {4,\lat [A]_2,8}\in J_{4,\lat
  [A]_2}(\ve^8)$.  Clearly, $\Eis {4,\lat [A]_2,0} \in J_{4,\lat
  [A]_2}^!(1)$ and $\Eis {6,\lat [A]_2,0} \in J_{6,\lat
  [A]_2}^!(1)$. For proving that the latter two functions are also
holomorphic at infinity it suffices to note that they are quotients of
Jacobi cusp forms in which the exponents of the $q$-powers run through
$\frac 13+\Z$, divided by $\eta^8$.

Using these Jacobi forms on $\lat [A_2]$, we can now prove

\begin{Theorem}
  \label{thm:modules-over-A2}
  For the root lattice $\lat [A]_2$, one has (using
  $S:=M_*(\ve^*)=\C[E_4,E_6,\eta]$)
  \begin{equation*}
    \begin{split}
      J_{\mathrm{odd},\lat [A]_2}(\ve^*) &=
      S\,\distjac {\lat [A_2]},\\
      J_{\mathrm{even},\lat [A]_2}(\ve^*) &= S\,\Eis {2,\lat [A]_2,8} +S\,\Eis
      {4,\lat [A]_2,8} +S\,\Eis {4,\lat [A]_2,0} +S\,\Eis {6,\lat
        [A]_2,0} .
    \end{split}
  \end{equation*}
  For the $S$-module $J_{\mathrm{even},\lat [A]_2}(\ve^*)$ one has the
  following exact sequence
  \begin{equation*}
    S^4
    \xrightarrow{
      \times\left[\begin{smallmatrix}
          E_6&E_4&-\eta^8&0\\ E_4^2&E_6&0&-\eta^8\\ 12^3\eta^{16}&0&E_6&-E_4\\0&12^3\eta^{16}&-E_4^2&E_6
        \end{smallmatrix}\right]}
    S^4
    \xrightarrow{\times\left[\begin{smallmatrix}\Eis {2,\lat [A]_2,8}\\ \Eis {4,\lat [A]_2,8}\\ \Eis {4,\lat [A]_2,0}\\ \Eis {6,\lat [A]_2,0}\end{smallmatrix}\right]}
    J_{\mathrm{even},\lat [A]_2}(\ve^*)
    \xrightarrow{}
    0
    .
  \end{equation*}
  Here $\times M$ is the map $x=(x_1,\dots,x_r)\mapsto x_1m_1+\cdots
  x_nm_r$, where~$m_j$ denotes the $j$-th row of the matrix $M$.
\end{Theorem}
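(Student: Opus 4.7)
The plan is to treat the odd and even parts separately, using the theta-expansion isomorphism of Theorem~\ref{thm:a} together with the structural results of Section~\ref{sec:structure-theorems}.

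For the odd part, first observe that $\dual A_2/A_2 \isom \Z/3\Z$, so any odd function on this set is a scalar multiple of $\leg{\cdot}{3}$; hence $\Theta^{\mathrm{odd}}(\lat[A]_2)$ is one-dimensional, spanned by $\distjac{\lat[A]_2} \in J_{1,\lat[A]_2}(\ve^8)$. Restricting Theorem~\ref{thm:a} to the $G$-stable subspace of Jacobi forms odd in $z$, every $\phi$ in $J_{k,\lat[A]_2}^{\mathrm{odd}}(\ve^h)$ takes the form $\phi = h_0\, \distjac{\lat[A]_2}$ with $h_0$ a scalar modular form of weight $k-1$ and character $\ve^{h-8}$ on $G$. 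Since $M_*(\ve^*) = S = \bigoplus_{l=0}^{23}\eta^l\C[E_4,E_6]$, any such $h_0$ lies in $S$; conversely any element of $S\cdot\distjac{\lat[A]_2}$ is a Jacobi form, and the map $S \to J_{\mathrm{odd},\lat[A]_2}(\ve^*)$, $f\mapsto f\distjac{\lat[A]_2}$, is injective because $\distjac{\lat[A]_2}\neq 0$.

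For the even part, $\Theta^{\mathrm{even}}(\lat[A]_2)$ is two-dimensional, and by Theorem~\ref{thm:Jacobi-forms-as-module-over-CE4E6} each $J_{\mathrm{even},\lat[A]_2}(\ve^h)$ is $\C[E_4,E_6]$-free of rank $(\det(\lat)+(-1)^{n_2}2^{n-n_2})/2 = 2$ (here $n = n_2 = 2$). To show that the four listed forms generate over $S$, the strategy is to compute, using the dimension formula of Theorem~\ref{thm:dimension-formula} together with the weight bound $k < n/2+12 = 13$ from the Supplement to Theorem~\ref{thm:Jacobi-forms-as-module-over-CE4E6}, the Hilbert-Poincar\'e polynomials
\begin{equation*}
  Q_{\mathrm{even},\lat[A]_2,8}(t) = t^2 + t^4,
  \qquad
  Q_{\mathrm{even},\lat[A]_2,0}(t) = t^4 + t^6.
\end{equation*}
Combined with the non-vanishing of the four forms (visible from their explicit theta expansions; for instance $\Eis{2,\lat[A]_2,8}$ has a non-zero $q^{1/3}e\bigl(\bif(r,z)\bigr)$-coefficient for each $r\in\dual A_2\setminus A_2$), this identifies $\{\Eis{2,\lat[A]_2,8},\Eis{4,\lat[A]_2,8}\}$ and $\{\Eis{4,\lat[A]_2,0},\Eis{6,\lat[A]_2,0}\}$ as $\C[E_4,E_6]$-bases of $J_{\mathrm{even},\lat[A]_2}(\ve^8)$ and $J_{\mathrm{even},\lat[A]_2}(1)$, respectively. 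For any other character $\ve^h$ a $\C[E_4,E_6]$-basis is obtained by multiplying one of these two bases by a suitable $\eta$-power, and the relations derived below show that the remaining two generators in that character class are $\C[E_4,E_6]$-combinations of the basis.

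For the exact sequence, rows~1 and~2 of $M$ are simply the defining equations of $\Eis{4,\lat[A]_2,0}$ and $\Eis{6,\lat[A]_2,0}$ rearranged, while rows~3 and~4 follow from rows~1 and~2 combined with the identity $E_4^3-E_6^2 = 12^3\eta^{24}$: explicitly, in $S^4$ one computes $E_6R_1-E_4R_2 = -\eta^8 R_3$ and $E_4^2R_1-E_6R_2 = \eta^8 R_4$. The main obstacle will be proving exactness at the middle $S^4$, i.e.~that every $S$-linear relation among the four generators is an $S$-combination of the rows of $M$. The approach is character by character: for each $h$ with $0\le h<24$, the $\ve^h$-component of the kernel of the evaluation map $\C[E_4,E_6]^4\to J_{\mathrm{even},\lat[A]_2}(\ve^h)$ is free of rank $2$ (the image is free of rank $2$), and one checks that the rows of $M$, after being multiplied by appropriate $\eta$-powers to land in the $\ve^h$-component, generate this kernel; the syzygies just displayed imply that in each character the four twisted rows span the same $\C[E_4,E_6]$-submodule as only two of them, matching the expected rank. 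Note that $M$ itself is singular over $S$ (the block identity $AA' = -12^3\eta^{24}I$ for the natural $2\times 2$-block structure of $M$ forces $\det(M) = 0$), but this causes no difficulty since the stated sequence does not claim injectivity of $\times M$.
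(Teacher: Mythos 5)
Your strategy follows the paper's: the odd part via the one\-/dimensionality of $\Theta^{\mathrm{odd}}(\lat [A]_2)$ and the theta expansion, the even part via the dimension formula together with the freeness result of Theorem~\ref{thm:Jacobi-forms-as-module-over-CE4E6}, and the exact sequence via the two defining relations for $\Eis {4,\lat [A]_2,0}$, $\Eis {6,\lat [A]_2,0}$ combined with $E_4^3-E_6^2=12^3\eta^{24}$. The odd part is complete, the four rows of $M$ are correctly identified as relations, and your exactness sketch for the middle term is at roughly the level of detail of the paper's own (which also leaves the final verification to the reader).

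There is, however, a genuine gap in the generation statement for $J_{\mathrm{even},\lat [A]_2}(\ve^*)$. You compute the Hilbert--Poincar\'e polynomials only for the components $h=0$ and $h=8$ and then assert that for every other $h$ a $\C[E_4,E_6]$-basis is obtained by multiplying one of these two bases by a suitable $\eta$-power. Multiplication by $\eta^l$ is an injection $J_{\mathrm{even},\lat [A]_2}(\ve^h)\rightarrow J_{\mathrm{even},\lat [A]_2}(\ve^{h+l})$ but not a surjection, and a free rank-two submodule of a free rank-two module over $\C[E_4,E_6]$ need not be the whole module; indeed the transfer fails in this very example at the transitions $h=7\to 8$ and $h=23\to 0$ (the image of a basis of $J_{\mathrm{even},\lat [A]_2}(\ve^7)$ under multiplication by $\eta$ sits in weights $8$ and $10$, whereas $J_{\mathrm{even},\lat [A]_2}(\ve^8)$ is generated in weights $2$ and $4$). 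So for each of the remaining $22$ characters you must still determine the generator weights, which is exactly the character-by-character dimension table the paper compiles. Two further points to make explicit: (a) for each $h$ there is one weight $k$ with $1\le k<3$ at which the dimension formula carries an unknown correction term (for $k=2$ it involves weight-one modular forms), so those low-weight dimensions require the auxiliary arguments of the paper (multiplication by $\eta^4$, $\eta^2$ or $E_4$ into spaces of known dimension, and for $h=8,9$ the explicit nonvanishing of $\Eis {2,\lat [A]_2,8}$); for $h=0,8$ your values of $Q$ can alternatively be forced from the dimensions in weights $\ge 4$ together with freeness and the weight bound, but this needs to be said rather than attributed to the dimension formula alone. (b) Your closing remark that the rows of $M$ show the ``remaining two generators'' in each character class are combinations of the chosen basis addresses redundancy of the four generators, not sufficiency, and hence does not repair the gap.
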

\begin{Remark}
  1.~The lattice $\lat [A]_2$ can also be realized as the submodule of
  vectors in~$\lat [\Z]^3$ equipped with the standard scalar product
  of $\lat [\Z]^3$. In other words we have an embedding
  $\alpha=(\alpha_1,\alpha_2,\alpha_3)$ of $\lat [A]_2$ in to $\lat
  [\Z]^3$. The pullback $\alpha^*\distjac {\lat [\Z]^3}$ is odd, and
  hence an $S$-multiple of $\distjac {\lat [A]_2}$. It follows that
  \begin{equation*}
    \distjac {\lat [A]_2}(\tau,z) =
    \eta^{-1}(\tau)\,
    \vartheta(\tau,\alpha_1(z))\,\vartheta(\tau,\alpha_2(z))\,\vartheta(\tau,\alpha_1(z)+\alpha_2(z))
    .
  \end{equation*}

  2.~If we view $J_{\mathrm{even},\lat [A]_2}(\ve^*)$ as a graded module over
  the polynomial ring $R$ in three variables $x$, $y$, $z$ with
  weights $4$, $6$ and $1/2$, respectively, via the action
  $(f,\phi)\mapsto f(E_4.E_6,\eta)\phi$, then the module
  $S_1\big(J_{\mathrm{even},\lat [A]_2}(\ve^*)\big)$ of first syzygies is still
  spanned by the rows of the $4\times 4$-matrix $B$ occurring in the
  exact sequence of the theorem (which has, of course, to be viewed as
  a matrix over $R$). Indeed, if $v\in R^4$ is a relation between the
  given generators of $J_{\mathrm{even},\lat [A]_2}(\ve^*)$, then, by the
  theorem, $v$ is an element of $R^4B+K$, where $K$ is the kernel of
  the evaluation map $w\mapsto w(E_4,E_6,\eta)$ from $R^4$ to
  $S^4$. But this kernel equals $\delta R^4$, where $\delta=(12^3
  z^{24} - x^3+y^2)$, and a simple calculation shows that the
  determinant of $B$ equals $\delta^2$ and that $\delta$ divides the
  entries of the adjoint matrix of $B$, which implies that $K$ is
  contained in $R^4B$. From $\det(B)=\delta^2\not=0$ we also see that
  $S_1\big(J_{\mathrm{even},\lat [A]_2}(\ve^*)\big)$ is free.
\end{Remark}
\begin{proof}[Proof of Theorem~\ref{thm:modules-over-A2}]
  The first statement is nothing else but a restatement of the theta
  expansion for Jacobi forms on $\lat [A]_2$ which are odd in the
  lattice variable on using that $\distjac {\lat [A]_2}$ is the only
  funtion in $\Theta(\lat [A]_2)$ which is odd in the lattice variable
  as we saw above.

  For the second statement we use the dimension formula to find the
  following table, where the entry in the $h$-th row and $k$-th column
  is the dimension of the subspace of forms in $J_{k,\lat
    [A]_2}(\ve^h)$ which are even in the lattice variable. From the
  discussion preceding the theorem we know that there is no such form
  in singular weight (which we therefore omitted from the table).
  \begin{equation*}
    \begin{smallmatrix}
      &\frac32&2&\frac52&3&\frac72&4&\frac92 &5&\frac{11}2&6&\frac{13}2&7&\frac{15}2&8&\frac{17}2&9&\frac{19}2&10&\frac{21}2&11&\frac{23}2\\
      0 & & & & & &1 & & & &1 & & & &1 & & & &2 & & & \\
      1 & & & & & & &1 & & & &1 & & & &1 & & & &2 & & \\
      2 & & & & & & & &1 & & & &1 & & & &1 & & & &2 & \\
      3 & & & & & & & & &1 & & & &1 & & & &1 & & & &2 \\
      4 & & & & & & & & & &1 & & & &1 & & & &1 & & & \\
      5 & & & & & & & & & & &1 & & & &1 & & & &1 & & \\
      6 & & & & & & & & & & & &1 & & & &1 & & & &1 & \\
      7 & & & & & & & & & & & & &1 & & & &1 & & & &1 \\
      8 & &1 & & & &1 & & & &1 & & & &2 & & & &2 & & & \\
      9 & & &1 & & & &1 & & & &1 & & & &2 & & & &2 & & \\
      10 & & & &1 & & & &1 & & & &1 & & & &2 & & & &2 & \\
      11 & & & & &1 & & & &1 & & & &1 & & & &2 & & & &2 \\
      12 & & & & & &1 & & & &1 & & & &1 & & & &2 & & & \\
      13 & & & & & & &1 & & & &1 & & & &1 & & & &2 & & \\
      14 & & & & & & & &1 & & & &1 & & & &1 & & & &2 & \\
      15 & & & & & & & & &1 & & & &1 & & & &1 & & & &2 \\
      16 & & & & & & & & & &1 & & & &1 & & & &1 & & & \\
      17 & & & & & & & & & & &1 & & & &1 & & & &1 & & \\
      18 & & & & & & & & & & & &1 & & & &1 & & & &1 & \\
      19 & & & & & & & & & & & & &1 & & & &1 & & & &1 \\
      20 & & & & & & & & & & & & & &1 & & & &1 & & & \\
      21 & & & & & & & & & & & & & & &1 & & & &1 & & \\
      22 & & & & & & & & & & & & & & & &1 & & & &1 & \\
      23 & & & & & & & & & & & & & & & & &1 & & & &1 \\
    \end{smallmatrix}
  \end{equation*}
  (Strictly speaking the dimension formula gives only the dimensions
  for weights $k\le 3$. Hence, for each $h$ there is exactly one
  weight $1\le k<3$ in $\frac h2 +2\Z$ which is not known. The
  validity of the entries of the table for these weights and
  $h\not=8,9$ can be verified in each case using either of the
  following four rules: (i)~$J_{k,\lat [A]_2}(\ve^h)=0$ if
  $J_{k+2,\lat [A]_2}(\ve^{h+2})=0$, (ii)~$J_{k,\lat [A]_2}(\ve^h)=0$
  if $k+1\ge3$ and $J_{k+1,\lat [A]_2}(\ve^{h+1})=0$, (iii)~$J_{k,\lat
    [A]_2}(\ve^h)=0$ if $J_{k+4,\lat [A]_2}(\ve^{h})=0$. These rules
  follow from the fact that multiplication by $\eta^4$, $\eta^2$ and
  $E_4$, respectively, take the first space into the second). Rule (i)
  applies to all $h$ except $h=4,5,7,8,9,20,21$, rule~(ii) to
  $h=4,5,20,21$, rule~(iii) to $h=7$. Finally, for $h=8,9$ we note
  that~$J_{2,\lat [A]_2}(\ve^8)$ and $J_{5/2,\lat [A]_2}(\ve^9)$ are
  at least one-dimensional since they contain $\Eis {2,\lat [A]_2,8}$
  and $\eta \Eis {2,\lat [A]_2,8}$, respectively, and that they are
  not of higher dimension as multiplication of these spaces by
  $\eta^4$ and inspecting the dimension of the space containing the
  image shows.)

  From Theorem~\ref{thm:Jacobi-forms-as-module-over-CE4E6} we know
  that, for each $h$, the $M_*(1)$-module $J_{\mathrm{even},\lat
    [A]_2}(\ve^h)$ is free of rank~$2$. From this and the table of
  dimensions we see that $J_{\mathrm{even},\lat [A]_2}(\ve^h)$, for
  $0\le h\le 7$ is generated by $\eta^h\Eis {4,\lat [A]_2,0}$
  and~$\eta^h\Eis {6,\lat [A]_2,0}$, and, for $8\le h\le 23$ is
  generated by $\eta^h\Eis {2,\lat [A]_2,8}$ and $\eta^h\Eis {4,\lat
    [A]_2,8}$. This proves the second statement.

  Finally, for proving the third statement, consider a relation
  $f=(f_1,\dots,f_4)$ in $S^4$ between the given generators of
  $J_{\mathrm{even},\lat [A]_2}(\ve^*)$. We have to show that $f$ is a
  $S$-linear combination of the rows of the $4\times 4$ matrix~$B$
  occurring in the exact sequence. (That $S^4B$ is contained in the
  $S$-module of relations is obvious.)  For this we can assume that
  the $f_i$ are graded elements of $S$ (i.e.~that each $f_i$ is in a
  space $\eta^hM_k(1)$ for a suitable $k$ and $0\le h < 24$. Replacing
  in the identity $f_1\Eis {2,\lat [A]_2,8} + f_2\Eis {4,\lat [A]_2,8}
  +f_3\Eis {4,\lat [A]_2,0} +f_4\Eis {6,\lat [A]_2,0}=0$ the Jacobi
  forms $\Eis {4,\lat [A]_2,0}$ and $\Eis {6,\lat [A]_2,0}$ by their
  expressions in terms of $\Eis {2,\lat [A]_2,8}$ and $\Eis {4,\lat
    [A]_2,8}$ and using that $\Eis {2,\lat [A]_2,8}$ and $\Eis {4,\lat
    [A]_2,8}$ are linearly independent over $M_*(1)$ yields relations
  among the $f_j$, which imply that~$v$ is indeed in $S^4B$. We leave
  the details to the reader.
\end{proof}

\subsection{The root lattice \texorpdfstring{$\lat [D]_3$}{D3}}

The root lattice $\lat [D]_3$ can be realized as the sublattice of
$\lat [\Z]^3$ consisting of those vectors whose sum of components is
even. In other words, $\lat [D]_3=\ev{(\lat [\Z]^3)}$. Note also that
$\lat [D]_3$ is isomorphic to the lattice $\lat [A]_3$, which can be
realized as the $\Z$-module of integral vectors in $\Z^4$ whose
entries sum up to $0$, equipped with the standard scalar product of
$\lat [\Z]^4$. There exist modulo left and right multiplication by
automorphisms exactly two proper embeddings of $\lat [D]_3$ into the
standard lattices $\lat [\Z]^N$, namely, the inclusion $\iota_1$ for
$N=3$, and the other being the identification $\iota_2$ with the
mentioned model of $\lat [A]_3$ in $\lat [\Z]^4$. Accordingly we
obtain the two Jacobi forms $\iota_1^*\distjac {\lat [\Z]^3}$ and
$\iota_2^*\distjac {\lat [\Z]^4}$. The first form is the only singular
weight form for $\lat [D]_3$, the second one the only form of weight
$2$ and character $12$, as it can be easily deduced using the
dimension formula.

If we write $\iota_1=(z_1,z_2,z_3)$, then for $\iota_2$ we can take
\begin{equation*}
  \iota_2=\tfrac12(z_1+z_2-z_3,z_1-z_2+z_3,-z_1+z_2+z_3,-z_1-z_2-z_3,)
  .
\end{equation*}
For the corresponding pullback we then obtain
\begin{equation*}
  \iota_2^*\distjac {\lat [\Z]^4}(\tau,\_)
  =
  -
  \vartheta(\tau,\tfrac{z_1+z_2-z_3}2)\,
  \vartheta(\tau,\tfrac{z_1-z_2+z_3}2)\,
  \vartheta(\tau,\tfrac{-z_1+z_2+z_3}2)\,
  \vartheta(\tau,\tfrac{z_1+z_2+z_3}2)
  ,
\end{equation*}
where we also write $z_j$ for the $\C$-linear continuation of $z_j$
(and where we used that~$\vartheta(\tau,z)$ is odd in $z$).
 
From the supplement to Theorem~\ref{thm:dimension-formula} we see
that, for given $k\ge 4$, the space $J_{k, \lat [D]_3}(1)$ contains
exactly one or no Eisenstein series accordingly as $k$ is even or
odd. Moreover, from the dimension formula we know that $J_{k, \lat
  [D]_3}(1)$ is one dimensional for $k=4,6$, hence spanned by its
Eisenstein series. For obtaining formulas for the Eisenstein series in
these weights, we consider embeddings of $\lat [D]_3$ into $\lat
[E]_8$. If we take for $\lat [E]_8$ the model discussed in the last
but not least subsection and for $\lat [D]_3$ the lattice $\ev{(\lat
  [\Z]^3)}$, such an embedding is given by $\alpha:(x,y,z)\mapsto
(0,0,0,0,x,y,z,0)$. In this way we obtain the forms
\begin{equation*}
  \Eis {4,\lat [D]_3,0} := \alpha^* \distjac{\lat [E]_8},
  \qquad
  \Eis {6,\lat [D]_3,0} := \delta \alpha^* \distjac{\lat [E]_8}   
  ,
\end{equation*}
where we use again the differential operator $\delta$ introduced in
the preceding discussion of $\lat [A]_2$ and $\distjac {\lat [E]_8}$

from~\eqref{eq:distjac-E8}.

For the Poincar\'e-Hilbert series of $J_{\mathrm{odd},\lat [D]_3}(1)$ and
$J_{\mathrm{even},\lat [D]_3}(1)$ we find
\begin{equation*}
  \begin{split}
    P_{\mathrm{odd},\lat [D]_3.0}&=\tfrac {t^9}{(1-t^4)(1-t^6)}\\
    P_{\mathrm{even},\lat [D]_3.0}&= t^{4} + t^{6} + 2t^{8} + 2t^{10} +
    3t^{12} + \cdots =\tfrac {t^4+t^6+t^8}{(1-t^4)(1-t^6)} .
  \end{split}
\end{equation*}
(The dimension formula gives us only explicit information for weights
$k\ge 7/2$.  However, $J_{2,\lat [D]_3}(1)=0$ and $J_{3,\lat
  [D]_3}(1)=0$ since $\eta^4J_{2,\lat [D]_3}(1) \subseteq J_{4,\lat
  [D]_3}(\ve^4)$ and since $E_4J_{3,\lat [D]_3}(1) \subseteq J_{7,\lat
  [D]_3}(1)$, but $J_{4,\lat [D]_3}(\ve^4)=0$ and $J_{7,\lat
  [D]_3}(1)=0$ by the dimension formula). We note that
$\eta^{12}\,\iota_2^*\distjac {\lat [\Z]^4}$ is a cusp form in
$J_{8,\lat [D]_3}(1)$. In fact, it is the only one in this space, and
the one of smallest weight and trivial character on $\lat [D]_3$.

Summarizing, we have proved
\begin{Theorem}
  \label{thm:modules-over-D3}
  For the root lattice $\lat [D]_3$, one has
  \begin{equation*}
    \begin{split}
      J_{\mathrm{odd},\lat [D]_3}(1) &=
      M_*(1)\,\eta^{15}\,\iota_1^*\distjac {\lat [\Z]^3},\\
      J_{\mathrm{even},\lat [D]_3}(1) &= M_*(1)\,\Eis {4,\lat [D]_3,0} \oplus
      M_*(1)\,\Eis {6,\lat [D]_3,0} \oplus
      M_*(1)\,\eta^{12}\,\iota_2^*\distjac {\lat [\Z]^4} .
    \end{split}
  \end{equation*}
\end{Theorem}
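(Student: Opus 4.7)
The argument follows the template of Theorem~\ref{thm:modules-over-A2}: first compute the ranks of the $M_*(1)$-modules $J_{\mathrm{parity},\lat[D]_3}(1)$, then use the dimension formula to pin down the Hilbert-Poincar\'e series of each, and finally verify that the proposed generators realize a graded $M_*(1)$-basis. To compute $n_2$, I use the realization $\lat[D]_3 \isom \lat[A]_3$ with Gram matrix $\bigl(\begin{smallmatrix} 2 & -1 & 0 \\ -1 & 2 & -1 \\ 0 & -1 & 2\end{smallmatrix}\bigr)$: over $\Z_2$, the submodule spanned by $e_1$ and $e_2$ has Gram matrix of odd determinant $3$ and thus supplies a rank-two unimodular Jordan block, so $n_2 = 2$. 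Theorem~\ref{thm:Jacobi-forms-as-module-over-CE4E6} then gives ranks $\tfrac12(4-2)=1$ and $\tfrac12(4+2)=3$ for the odd and even parts respectively.

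\textbf{Hilbert-Poincar\'e series.} Theorem~\ref{thm:dimension-formula} directly determines $\dim J_{k,\lat[D]_3}(1)$ for all $k \ge 7/2$; the two exceptional small weights are handled as in the excerpt's discussion: multiplication by $\eta^4$ embeds $J_{2,\lat[D]_3}(1)$ into $J_{4,\lat[D]_3}(\ve^4)=0$, and multiplication by $E_4$ embeds $J_{3,\lat[D]_3}(1)$ into $J_{7,\lat[D]_3}(1)=0$. This yields the announced polynomials $Q_{\mathrm{odd},\lat[D]_3,0}(t)=t^9$ and $Q_{\mathrm{even},\lat[D]_3,0}(t)=t^4+t^6+t^8$, so any graded $M_*(1)$-basis comprises a single weight-$9$ form in the odd part and exactly one form each of weights $4$, $6$, $8$ in the even part.

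\textbf{Basis verification.} The odd candidate $\eta^{15}\iota_1^*\distjac{\lat[\Z]^3}$ has weight $9$ and character $\ve^{9+15}=\ve^{24}=1$, is odd in $z$, and is nonzero, so by the rank and weight count it generates $J_{\mathrm{odd},\lat[D]_3}(1)$. For the even part, $\Eis{4,\lat[D]_3,0}$ and $\Eis{6,\lat[D]_3,0}$ are nonzero elements of the one-dimensional weight-$4$ and weight-$6$ graded pieces and hence supply the first two basis elements. The weight-$8$ graded piece of the free module has dimension $2$, spanned a priori by $E_4\,\Eis{4,\lat[D]_3,0}$ together with the third basis element, so the final task is to check that $\eta^{12}\iota_2^*\distjac{\lat[\Z]^4}$ is $\C$-linearly independent of $E_4\,\Eis{4,\lat[D]_3,0}$.

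\textbf{Main obstacle.} This last independence is the crux, and I would argue it by cuspidality. The form $\iota_2^*\distjac{\lat[\Z]^4}$ has singular weight $2$, character $\ve^{12}$, and Fourier support on pairs $(n,r)$ with $n \in \tfrac12+\Z$ and $n \ge \bif(r)$; since the minimum value of $\bif$ on the $\ve^{12}$-compatible shadow cosets of $\lat[D]_3$ is $\tfrac12$, its $q$-expansion begins at $q^{1/2}$, and multiplying by $\eta^{12}=q^{1/2}\prod_n(1-q^n)^{12}$ produces a form beginning at $q^1$, i.e., a cusp form. Conversely, $\Eis{4,\lat[D]_3,0}=\alpha^*\distjac{\lat[E]_8}$ inherits the value $1$ for its $c(0,0)$-coefficient from the $r=0$ summand of $\distjac{\lat[E]_8}$, so $E_4\,\Eis{4,\lat[D]_3,0}$ has constant Fourier coefficient $1$ and is not cuspidal. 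This cuspidality dichotomy finishes the proof.
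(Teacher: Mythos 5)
Your proposal is correct and follows essentially the same route as the paper's own argument: both determine the Hilbert--Poincar\'e series from Theorem~\ref{thm:dimension-formula} (disposing of the unknown weights $2$ and $3$ by multiplying into $J_{4,\lat [D]_3}(\ve^4)=0$ and $J_{7,\lat [D]_3}(1)=0$), take the same pullbacks from $\lat [\Z]^3$, $\lat [\Z]^4$ and $\lat [E]_8$ as generators, and separate $\eta^{12}\iota_2^*\distjac {\lat [\Z]^4}$ from $E_4\,\Eis {4,\lat [D]_3,0}$ in weight $8$ by the cusp-form/constant-term dichotomy. The only cosmetic difference is that you obtain the module ranks from $n_2=2$ via Theorem~\ref{thm:Jacobi-forms-as-module-over-CE4E6} rather than reading them off the Poincar\'e series.
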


Using the dimension formula and following the method of proof of
Theorem~\ref{thm:modules-over-A2} it can be verified that in fact
$J_{\mathrm{odd},\lat [D]_3}(\ve^*)$ is generated as module over
$M_*(\ve^*)=\C[E_4,E_6,\eta]$ by $\iota_1^*\distjac {\lat [\Z]^3}$,
and that $J_{\mathrm{odd},\lat [D]_3}(\ve^*)$ as module over $M_*(\ve^*)$ needs
$9$ generators, which are of weight $4,6,8$ and character $1$, weight
$4+\tfrac12,6+\tfrac12,8+\tfrac12$ and character $\ve^9$, and weight
$2,4,6$ and character $\ve^{12}$. As we saw at the end of
Section~\ref{sec:structure-theorems} these generators can be
constructed as linear combination of the three given generators for
$J_{\mathrm{even},\lat [D]_3}(1)$ with coefficients in
$\C[E_4,E_6,\eta,\eta^{-1}]$.

\bibliography{joli} \bibliographystyle{alpha}

\end{document}